\documentclass[11pt]{amsart}
\usepackage{a4wide,enumerate,color,graphicx}
\usepackage[colorlinks, linkcolor=blue]{hyperref}
\usepackage{enumitem}
\usepackage[normalem]{ulem}
\usepackage{lineno}
\usepackage{amsmath}
\usepackage{comment}
\usepackage{appendix}
\usepackage{tikz}
\allowdisplaybreaks
\numberwithin{equation}{section}


\let\pa\partial  
\let\na\nabla  
\let\eps\varepsilon

\newcommand{\diver}{\operatorname{div}}


\newcommand{\dom}{{\mathcal O}}

\newcommand{\Prob}{\mathbb{P}}

\newcommand{\dt}{\textnormal{d}t}
\newcommand{\ds}{\textnormal{d}s}

\newcommand{\dr}{\textnormal{d}r}
\newcommand{\dW}{\textnormal{d}W}

\newcommand{\N}{{\mathbb N}}  
\newcommand{\R}{{\mathbb R}}

\definecolor{darkblue}{rgb}{0.1,0.1,0.9}
\newcommand{\hcommentg}[1]{\marginpar{\raggedright\scriptsize{\textcolor{darkblue}{#1}}}}
\definecolor{fh}{rgb}{0.54, 0.17, 0.89}

\definecolor{burntorange}{rgb}{0.8, 0.33, 0.0}

\newcommand{\red}{\textcolor{red}}

\newcommand\dela[2]{{\sout{#1}\,\color{blue}#2}}


\makeatletter
\def\namedlabel#1#2{\begingroup
    #2%
    \def\@currentlabel{#2}%
    \phantomsection\label{#1}\endgroup
}
\makeatother

\makeatletter
\newcommand{\mylabel}[2]{#2\def\@currentlabel{#2}\label{#1}}
\makeatother


\newtheorem{theorem}{Theorem}[section]   
\newtheorem{lemma}[theorem]{Lemma}   
   
\newtheorem{proposition}[theorem]{Proposition}   
\newtheorem{remark}{Remark}[section]
  
\newtheorem{definition}{Definition}[section]  
\newtheorem{example}{Example}[section] 
\newtheorem{assumption}{Assumption} 

 
\begin{document}  
\title[Pathwise mild and weak solutions for quasilinear SPDEs]{On the equivalence of pathwise mild and weak solutions for quasilinear SPDEs} 

\author[G. Dhariwal]{Gaurav Dhariwal}
\address{Institute for Analysis and Scientific Computing, Vienna University of  
	Technology, Wiedner Hauptstra\ss e 8--10, 1040 Wien, Austria}
\email{gaurav.dhariwal@tuwien.ac.at} 

\author[F. Huber]{Florian Huber}
\address{Institute for Analysis and Scientific Computing, Vienna University of  
	Technology, Wiedner Hauptstra\ss e 8--10, 1040 Wien, Austria}
\email{florian.huber@asc.tuwien.ac.at}

\author[A. Neam\c tu]{Alexandra Neam\c tu}
\address{Fakult\"at f\"ur Mathematik,
Universit\"at Bielefeld, 100131, D-33501 Bielefeld}
\email{alexandra.neamtu@uni-bielefeld.de} 
\date{\today}

\thanks{The first two authors acknowledge partial support from   
the Austrian Science Fund (FWF), grants I3401, P30000, W1245, and F65. The last author has been supported until December 2019 by a German Science Foundation (DFG) grant in the D-A-CH framework KU 3333/2-1.  AN also acknowledges support from the CRC 1283 “Taming uncertainty and profiting from randomness and low regularity in analysis,
stochastics and their applications”. The authors are grateful to Ansgar J\"ungel and Christian Kuehn for numerous discussions and suggestions.} 

\begin{abstract}
The main goal of this work is to relate weak and pathwise mild solutions for parabolic quasilinear stochastic partial differential equations (SPDEs).
Extending in a suitable way techniques from the theory of nonautonomous semilinear SPDEs to the quasilinear case, we prove the equivalence of these two solution concepts. 
\end{abstract}

\keywords{Quasilinear SPDEs, cross-diffusion systems, weak solution, pathwise mild solution.}  
 
\subjclass[2000]{60H15, 35R60, 35Q35, 35Q92.}

\maketitle


\section{Introduction}
The aim of this paper is to relate two solution concepts for quasilinear SPDEs, namely weak and pathwise mild,  with a particular emphasis on cross-diffusion {systems}. Such systems arise in numerous applications, for example, they can be used to describe the dynamics of interacting population species. A well-known model is the deterministic Shigesada-Kawasaki-Teramoto population system, which was introduced in \cite{SKT79} in order to analyze population segregation {between two species} by induced cross-diffusion. This system can be formally derived from a random-walk
model on lattices for transition rates which depend linearly on the population densities.
 Generalized population cross-diffusion models are obtained when the
dependence of the transition rates on the densities is nonlinear, see for instance~{\cite{ZaJu17}}.

In order to model population densities for $n\geq 2$ species, 
we consider cross-diffusion systems of the form
\begin{equation}
\begin{cases}\label{eq:system}
\textnormal{d}u=\diver\left(B(u)\nabla u\right)~\textnormal{d}t+\sigma(u)~\textnormal{d}W_t,\qquad t>0,\\
u(0)=u^0,
\end{cases}
\end{equation}
on an open, bounded domain $\dom\subset\R^d$ ($d\geq 1$), with smooth boundary $\partial \mathcal{O}$. Here $B=(B_{ij})$ is an $n\times n$ diffusion matrix, $\sigma$ is a nonlinear term and $W=(W_1,\ldots, W_n)$ is a cylindrical Wiener process. The precise assumptions on the coefficients will be stated in Section~\ref{sec:prelim} and~\ref{sec:main_thm}. 
The previous system rewrites componentwise as
\begin{equation}\label{1.eq}
  \textnormal{d}u_i - \diver\bigg(\sum_{j=1}^n B_{ij}(u)\na u_j\bigg)\textnormal{d}t
	= \sum_{j=1}^n\sigma_{ij}(u)\textnormal{d}W^j_t, \quad\ t>0,
\end{equation}
 with $u_i(0)=u_i^0\quad\mbox{in }\dom,\ i=1,\ldots,n$,
and is augmented by either no-flux boundary conditions
\begin{equation}\label{1.bic}
  \sum_{j=1}^n B_{ij}(u)\na u_j\cdot\nu = 0\quad\mbox{on }\pa\dom,\ i=1,\ldots,n,\ t>0,
\end{equation}
or homogeneous Dirichlet boundary conditions
\begin{equation}\label{1.bic_D}
  u_i(t,x) = 0\quad\mbox{on }\pa\dom,\ i=1,\ldots,n,\ t>0.
\end{equation}
The solution {$u_i:\Omega\times\dom\times[0,T]\to \mathbb{R}$} models the density of the $i^{\text{th}}$-population species at a current location $x\in\dom$ and a certain time $t>0$.\\

In order to investigate mild solutions for \eqref{eq:system}, we write it as an abstract quasilinear Cauchy problem
\begin{equation}\label{eq:1.Qspde}
    \begin{cases}
    \textnormal{d}u = A_u u \,\dt + \sigma(u)\,\dW_t\\
    u(0)=u^0,
    \end{cases}
\end{equation}
where the linear operator $A_u$ is given by $A_u v := \diver(B(u) \nabla v)$.\\

Due to their numerous applications, quasilinear SPDEs have attracted considerable interest (e.g. \cite{BaPrRo09, Gess12, LiRo10, PrRo07,HoZh17, DHV16, DMH15}) which has also broadened the scope of available solution concepts such as kinetic\,\cite{DHV16, FeGe19, GeHo18}, entropy\,\cite{DaGeGe19}, martingale\,\cite{DHV16,DJZ19,DHJKN19}.
Numerous developments for quasilinear SPDEs have been recently {made} in the context of rough paths theory\,\cite{OtWe19}, paracontrolled calculus\,\cite{BaDeHo19, FuGu19}, or regularity structures\,\cite{GeHa19}.

Another solution concept, so-called pathwise mild solution, for semilinear parabolic SPDEs with nonautonomous random generators was introduced by Pronk and Veraar in \cite{PrVe15}, where they bypassed the issue of non adaptive integrand in the definition of the It\^o integral by the use of integration by parts. This solution concept was then extended to the case of quasilinear parabolic SPDEs, including stochastic SKT system, by Kuehn and the last named author \cite{KuNe18} where they proved the existence of a unique local-in-time pathwise mild solution for the equations of the form \eqref{eq:1.Qspde}.
A stochastic process $u$ is called a pathwise mild solution of \eqref{eq:1.Qspde} if
\begin{equation}
\label{eqn-sol-path-mild}
    u(t)=U^u(t,0)u^{0} - \int_{0}^{t} U^u(t, s) A_u(s) \int_{s}^{t}\sigma(u(\tau))\dW_{\tau}\, \ds + U^u(t, 0) \int_{0}^{t}\sigma(u(s))\dW_{s},
\end{equation}
where $U^u(\cdot,\cdot)$, is the random evolution family generated by $A_u$, see Section~\ref{sec:prelim} for further details. This formula can be motivated using integration by parts and overcomes the non-adaptedness of the random evolution family $U^u(\cdot,\cdot)$ required in order to define the stochastic convolution as an It\^o integral.
Pathwise mild solutions for hyperbolic SPDEs with additive noise were analyzed in~\cite{MS17}.

The non-adaptedness of the integrand in the definition of a  stochastic integral was firstly discussed by Al\'os, Le\'on and Nualart in \cite{AlLeNu99, LeNu98} using the Skorokhod- and the Russo-Vallois~\cite{RuVa93} forward integral. Similar to~\cite{PrVe15}, in~\cite{LeNu98} such problems arise for semilinear SPDEs with random, nonautonomous generators.
Furthermore, in \cite{AlLeNu99, LeNu98}  it {was shown} that a Skorokhod-mild solution for such SPDEs does not satisfy the weak formulation, whereas the forward mild (based on the Russo-Vallois integral) does. In
\cite{PrVe15}, the authors showed that the pathwise mild solution is equivalent to the forward mild one.

{The} equivalence results for weak, pathwise- and forward-mild solutions for semilinear SPDEs obtained in~\cite{PrVe15} and the existence of pathwise mild solutions for quasilinear SPDEs obtained in~\cite{KuNe18}, {raise} a natural question {regarding} the equivalence of these solution concepts in the quasilinear case.
 Such an aspect is also important to study from a numerical point of view, since numerical schemes are aligned to the solution concept at hand. The same holds true for dynamical systems. In fact, many results regarding dynamics and asymptotic behavior of semilinear SPDEs, rely on a semigroup approach. If we take the SKT system as a motivation, then there are deterministic results
regarding the existence of attractors using weak~\cite{PhTe17} as well as mild~\cite{Ya08} solution concepts.

We emphasize that for semilinear PDEs and SPDEs numerous results regarding the equivalence of weak and mild solutions are well-known, see e.g.~\cite{Am93,Am95} and \cite{Ba77} for PDEs and e.g.~\cite{DPZ92} and~\cite{JVM08} for SPDEs.
 On the other hand, for quasilinear PDEs and SPDEs the literature discussing the equivalence of various solution concepts is very scarce. Therefore, we contribute to this aspect and establish the equivalence between pathwise mild and weak solutions for quasilinear SPDEs of the form \eqref{1.eq}, {see Theorem~\ref{thm:equivalence 1} and Theorem~\ref{thm:equivalence 2}. {Results regarding the existence of strong solutions for quasilinear PDEs are available in~\cite{Am86}, respectively for SPDEs in~\cite{Hornung}.  For certain elliptic-parabolic PDEs, using accretive operators and nonlinear semigroups, assertions regarding the equivalence of weak and mild solutions have been derived in \cite{GRW18,OuTo02} and the references specified therein. For nonlinear degenerate problems, the concept of entropy solution was introduced by Carrillo~\cite{Carrillo99}. In~\cite{Kobayasi03}, the equivalence between weak and entropy solutions was established for an elliptic-parabolic-hyperbolic degenerate PDE.
 However, to the best of our knowledge there are no other works in the literature discussing the equivalence of various solution concepts for quasilinear SPDEs such as \eqref{eq:1.Qspde}.}
 
The outline of our paper is as follows. In Section~\ref{sec:prelim}, we {introduce basic notations and collect} results from
the theory of evolution families generated by nonautonomous, random sectorial operators. These are necessary in order to introduce the concept of a pathwise mild solution for \eqref{eq:1.Qspde}. Section~\ref{sec:main_thm} contains our main result, which establishes under suitable assumptions on the coefficients, the equivalence of pathwise mild and weak solutions for~\eqref{eq:1.Qspde}.
The main idea is to approach the quasilinear SPDE \eqref{eq:1.Qspde} as a semilinear SPDE where the nonlinear map $A_u$ is viewed as a linear nonautonomous map for a fixed $u$, which is the pathwise mild solution of \eqref{eq:1.Qspde}. Thereafter we employ similar tools as in \cite{PrVe15, KuNe18} to prove the equivalence of weak and pathwise mild solutions for \eqref{eq:1.Qspde}.
Finally, we provide in Section~\ref{sec:example} examples of quasilinear SPDEs, to which the theory developed in this paper applies.  These include the stochastic SKT system. 


\section{Preliminaries}
\label{sec:prelim}
Let $T > 0$ be arbitrary but fixed and  $(\Omega,\mathcal{F},(\mathcal{F})_{t\in[0,T]},\Prob)$ be a filtered stochastic basis.
Let $X$ ($\|\cdot\|_X, \langle \cdot, \cdot\rangle_X$), $Y$ ($\|\cdot\|_Y, \langle\cdot, \cdot\rangle_Y$) and $Z$ ($\|\cdot\|_Z, \langle\cdot, \cdot\rangle_Z$) be separable Hilbert spaces such that the embeddings
\[
Z \hookrightarrow Y \hookrightarrow X
\]
are continuous. The choice of these Hilbert spaces depends on the corresponding quasilinear problem, see Section~\ref{sec:example} for concrete examples. We identify $X$ with its topological dual $X^\ast$.
Let $H$ denote another separable Hilbert space with orthonormal basis $\left\{\eta_{n}\right\}_{n\in\mathbb{N}}$ and $(W_{t})_{t\in[0,T]}$ is a cylindrical Wiener process over $X$, taking values in $H$. The cylindrical Wiener process $(W_t)_{t\geq 0}$ can be written as the series
\begin{equation}
\label{eq:wiener_process}
W_{t}=\sum_{n=1}^{\infty}e_{n}\beta_{n}(t),
\end{equation}
where $\left\{\beta_{n}(\cdot)\right\}_{n\in \mathbb{N}}$ are mutually independent real valued standard Brownian motions and $\{e_{n}\}_{n\in\mathbb{N}}$ denotes an orthonormal basis of $X$ and the sequence~\eqref{eq:wiener_process} converges in $H$ $\mathbb{P}$-a.s. The space of Hilbert-Schmidt operators from $H$ to $X$ will be denoted by $\mathcal{L}_2(H; X)$ and will be endowed with the norm
$$
\|L\|_{\mathcal{L}_{2}\left(H ; X\right)}^2 := \sum_{k=1}^{\infty}\left\|L \eta_{k}\right\|^2_{X}.
$$
We recall some auxiliary results related to the regularity of the stochastic integral with respect to a cylindrical Wiener process.

\begin{proposition} \cite[Prop.~4.4]{PrVe15F}
\label{prop:reg_stoc_int_1}
Let $p \in[2, \infty)$, $0 < \alpha < 1/2$ and $\sigma$ be a strongly measurable adapted process belonging to $L^{0}\left(\Omega ; L^{p}(0, T ;\mathcal{L}_{2}(H, X))\right)$. Then, the stochastic integral
\[
\int_{0}^{\cdot}\sigma(r)\,\dW_r\in L^{0}(\Omega;W^{\alpha,p}(0,T;X)).
\]
\end{proposition}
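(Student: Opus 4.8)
\noindent\emph{Proof idea.} The plan is to reduce, by localization, to an integrand that is $p$-integrable over $\Omega\times(0,T)$, and then to estimate the Sobolev--Slobodeckij norm of the stochastic convolution directly, combining the Burkholder--Davis--Gundy (BDG) inequality in the Hilbert-space setting with a deterministic singular-kernel bound in which the restriction $\alpha<1/2$ is exactly what is needed.

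\textbf{Step 1 (reduction by localization).} Since $p\ge2$ and $(0,T)$ is bounded, the hypothesis on $\sigma$ already makes $\int_0^\cdot\sigma(r)\,\dW_r$ a well-defined continuous $X$-valued local martingale. As $t\mapsto\int_0^t\|\sigma(r)\|_{\mathcal{L}_2(H,X)}^p\,\dr$ is finite $\Prob$-a.s., the stopping times
\[
\tau_n:=\inf\Big\{t\in[0,T]:\int_0^t\|\sigma(r)\|_{\mathcal{L}_2(H,X)}^p\,\dr\ge n\Big\}\wedge T
\]
increase to $T$ almost surely, $\sigma_n:=\sigma\,\mathbf{1}_{[0,\tau_n]}$ belongs to $L^p(\Omega\times(0,T);\mathcal{L}_2(H,X))$ with $\E\int_0^T\|\sigma_n(r)\|_{\mathcal{L}_2(H,X)}^p\,\dr\le n$, and $\int_0^\cdot\sigma_n\,\dW$ coincides with $\int_0^\cdot\sigma\,\dW$ on $\{\tau_n=T\}$. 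It therefore suffices to prove the quantitative estimate
\[
\E\Big\|\int_0^\cdot\sigma(r)\,\dW_r\Big\|_{W^{\alpha,p}(0,T;X)}^p\le C(p,\alpha,T)\,\E\int_0^T\|\sigma(r)\|_{\mathcal{L}_2(H,X)}^p\,\dr
\]
for $\sigma\in L^p(\Omega\times(0,T);\mathcal{L}_2(H,X))$; applying it to $\sigma_n$ and letting $n\to\infty$ recovers the stated $L^0(\Omega;\cdot)$ conclusion.

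\textbf{Step 2 (norm estimate via BDG and Tonelli).} Writing $\Phi(t):=\int_0^t\sigma(r)\,\dW_r$ and using
\[
\|\Phi\|_{W^{\alpha,p}(0,T;X)}^p=\|\Phi\|_{L^p(0,T;X)}^p+\int_0^T\!\!\int_0^T\frac{\|\Phi(t)-\Phi(s)\|_X^p}{|t-s|^{1+\alpha p}}\,\ds\,\dt,
\]
together with $\Phi(t)-\Phi(s)=\int_s^t\sigma(r)\,\dW_r$ for $s<t$, the BDG inequality and Hölder's inequality in time (here $p\ge2$) give
\[
\E\|\Phi(t)-\Phi(s)\|_X^p\le C_p\,\E\Big(\int_s^t\|\sigma(r)\|_{\mathcal{L}_2(H,X)}^2\,\dr\Big)^{p/2}\le C_p\,|t-s|^{\frac p2-1}\,\E\int_s^t\|\sigma(r)\|_{\mathcal{L}_2(H,X)}^p\,\dr.
\]
Plugging this into the seminorm, exploiting symmetry in $(s,t)$ and interchanging the order of integration via Tonelli's theorem bounds the double integral by $2C_p\,\E\int_0^T\|\sigma(r)\|_{\mathcal{L}_2(H,X)}^p\,K_r\,\dr$, where $K_r:=\int_0^r\!\!\int_r^T|t-s|^{\frac p2-2-\alpha p}\,\dt\,\ds$; the $L^p(0,T;X)$-part is bounded analogously and crudely by $C_p\,T^{p/2}\,\E\int_0^T\|\sigma(r)\|_{\mathcal{L}_2(H,X)}^p\,\dr$.

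\textbf{Step 3 (kernel bound and conclusion).} With $\beta:=\frac p2-2-\alpha p$, the substitution $a=r-s$, $b=t-r$ gives $K_r=\int_0^r\!\int_0^{T-r}(a+b)^\beta\,\rd b\,\rd a$, which is finite and bounded uniformly in $r\in[0,T]$ precisely when $\beta>-2$, i.e. when $\alpha<\frac12$. Combining with Step 2 gives the estimate of Step 1, which then completes the proof. The crux of the argument is this kernel bound in Step 3 --- checking that the singular factor $|t-s|^{\frac p2-2-\alpha p}$ is integrable over $\{0<s<r<t<T\}$ with a constant independent of $r$, which is exactly where the hypothesis $\alpha<1/2$ enters and is sharp --- whereas the Hilbert-space BDG inequality, Hölder in time, the Tonelli interchange, and the gluing of localized stochastic integrals in Step 1 are all routine.
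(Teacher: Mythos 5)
Your argument is correct and complete. Note, however, that the paper does not prove this proposition at all: it is quoted verbatim from Pronk--Veraar \cite[Prop.~4.4]{PrVe15F}, so there is no in-paper proof to compare against. Your route --- localize via the stopping times $\tau_n$ to reduce to $\sigma\in L^p(\Omega\times(0,T);\mathcal{L}_2(H,X))$, then bound the Sobolev--Slobodeckij seminorm by combining the Hilbert-space Burkholder--Davis--Gundy inequality, H\"older in time (using $p\ge 2$), and Tonelli, with the kernel $|t-s|^{p/2-2-\alpha p}$ integrable over $\{s<r<t\}$ exactly when $\alpha<1/2$ --- is the standard self-contained proof in the Hilbert-space setting and all steps check out (in particular $\int_0^{\tau_n}\|\sigma\|^p_{\mathcal{L}_2(H,X)}\,\dr\le n$ by continuity of the running integral, and the uniform bound $K_r\le\int_0^T\int_0^T(a+b)^{\beta}\,\rd a\,\rd b<\infty$ for $\beta>-2$). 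The only difference from the cited source is one of generality: Pronk and Veraar work with UMD Banach spaces and $\gamma$-radonifying norms, where BDG is replaced by the It\^o isomorphism and decoupling estimates; in the Hilbert-space case relevant here, $\gamma(H,X)=\mathcal{L}_2(H,X)$ and their argument collapses to precisely the one you give. So your proof buys self-containedness at the (irrelevant for this paper) cost of generality.
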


The continuous embedding $W^{\alpha,p}(0,T;X)\hookrightarrow C^{\alpha - 1/p} (0,T;X)$, for $1/p < \alpha < 1/2$, provides H\"older regularity of the stochastic integral, see \cite[Prop.~4.1]{PrVe15} for the full generality of the statement.

\begin{proposition} \label{prop:reg_stoc_int_2}
 Let $p \in[2, \infty)$, $1/p < \alpha < 1/2$ and $\sigma \in L^{0}\left(\Omega ; L^{p}(0, T ;\mathcal{L}_{2}(H, X))\right)$ be a strongly measurable adapted process. Then, there exists a positive constant $C_T$ independent of $\sigma$, converging to $0$ for $T \searrow 0$, such that
\begin{equation*}
    \left\|\int_{0}^{t}\sigma(r)~\dW_r\right\|_{L^{p}(\Omega,C^{\alpha - 1/p}(0,T;X))}\leq C_{T}\left\|\sigma\right\|_{L^{0}\left(\Omega ; L^{p}(0, T ;\mathcal{L}_{2}(H, X))\right)}.
\end{equation*}
\end{proposition}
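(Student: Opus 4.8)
The plan is to first establish the bound at the level of the fractional Sobolev space $W^{\alpha,p}(0,T;X)$ and then to descend to $C^{\alpha-1/p}(0,T;X)$ via the continuous embedding recalled above; compared with Proposition~\ref{prop:reg_stoc_int_1} the only new point is to make the estimate quantitative and to keep track of the dependence of the constant on $T$, which is what will produce the decay $C_T\to 0$ as $T\searrow 0$.

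For the first step I would write $I(t):=\int_0^t\sigma(r)\,\dW_r$, so that $I(0)=0$ and $I(t)-I(s)=\int_s^t\sigma(r)\,\dW_r$ for $0\le s\le t\le T$. Since $X$ is a Hilbert space, the Burkholder--Davis--Gundy inequality followed by H\"older's inequality in the time variable (this is where $p\ge2$ enters) yields, for $0\le s\le t\le T$,
\[
\E\|I(t)-I(s)\|_X^{p}\;\le\;C_p\,\E\Big(\int_s^t\|\sigma(r)\|_{\mathcal{L}_2(H,X)}^{2}\,\dr\Big)^{p/2}\;\le\;C_p\,|t-s|^{\,p/2-1}\,\E\int_s^t\|\sigma(r)\|_{\mathcal{L}_2(H,X)}^{p}\,\dr .
\]
Inserting this into the Slobodeckij seminorm $\iint_{(0,T)^2}\|I(t)-I(s)\|_X^p\,|t-s|^{-1-\alpha p}\,\dt\,\ds$, using Tonelli and then Fubini to move the $\dr$-integral to the outside, one is reduced to the purely deterministic kernel integral $\iint_{\{s\wedge t\le r\le s\vee t\}}|t-s|^{\,p/2-2-\alpha p}\,\dt\,\ds$. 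The crucial observation is that on the set $\{s\wedge t\le r\le s\vee t\}$ the diagonal $\{s=t\}$ is touched only at the single point $s=t=r$, so this integral converges as soon as the exponent exceeds $-2$, that is, precisely when $\alpha<1/2$; a direct computation moreover bounds it by $C_{p,\alpha}\,T^{\,p(1/2-\alpha)}$ uniformly in $r\in(0,T)$. Treating the $L^p(0,T;X)$-part of the $W^{\alpha,p}$-norm in the same way (it only adds a term of order $T^{p/2}$, dominated by $T^{p(1/2-\alpha)}$ for $T\le 1$), I would arrive at
\[
\Big\|\int_0^{\cdot}\sigma(r)\,\dW_r\Big\|_{L^{p}(\Omega;W^{\alpha,p}(0,T;X))}\;\le\;C_{p,\alpha}\,T^{1/2-\alpha}\,\|\sigma\|_{L^{p}(\Omega;L^{p}(0,T;\mathcal{L}_2(H,X)))} .
\]

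For the second step I would invoke the embedding $W^{\alpha,p}(0,T;X)\hookrightarrow C^{\alpha-1/p}(0,T;X)$, which holds because $1/p<\alpha<1/2$. A scaling argument (testing the embedding inequality on $f(\lambda\,\cdot)$) shows that the constant in $[\,f\,]_{C^{\alpha-1/p}(0,T;X)}\le C_{p,\alpha}\,[\,f\,]_{W^{\alpha,p}(0,T;X)}$ does not depend on $T$, and since $I(0)=0$ one has $\|I\|_{C^{0}(0,T;X)}\le T^{\alpha-1/p}[\,I\,]_{C^{\alpha-1/p}(0,T;X)}$, so that for $T\le1$ the full $C^{\alpha-1/p}$-norm is controlled by the $W^{\alpha,p}$-seminorm with a $T$-independent constant. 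Combined with the previous display this gives the assertion with $C_T:=C_{p,\alpha}\,T^{1/2-\alpha}$, which tends to $0$ as $T\searrow0$ because $\alpha<1/2$. If $\sigma$ is only assumed to belong to $L^{0}(\Omega;L^{p}(0,T;\mathcal{L}_2(H,X)))$, I would first prove the estimate for $\sigma\in L^{p}(\Omega;L^{p}(0,T;\mathcal{L}_2(H,X)))$ and then extend it by a routine stopping-time localization, as in \cite[Prop.~4.1]{PrVe15}.

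The step I expect to be the main obstacle is the convergence and the $T$-scaling of the deterministic kernel integral over the whole range $\alpha<1/2$: it is the combination of the H\"older step, which produces the gain $|t-s|^{p/2-1}$, and the Fubini rearrangement, which shrinks the effective singular set from the whole diagonal to a point, that makes the double integral integrable, while the H\"older step at the same time produces exactly the power $T^{1/2-\alpha}$ responsible for the smallness of $C_T$. A secondary point requiring care is securing a $T$-independent constant in the Sobolev embedding instead of one blowing up as $T\to0$, which is precisely why both the scaling argument and the vanishing initial value $I(0)=0$ are used.
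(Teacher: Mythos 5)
Your proposal is correct and follows essentially the same route the paper takes: the paper gives no proof of its own but simply invokes \cite[Prop.~4.4]{PrVe15F} (Proposition~\ref{prop:reg_stoc_int_1}) for the $W^{\alpha,p}$-bound and the embedding $W^{\alpha,p}(0,T;X)\hookrightarrow C^{\alpha-1/p}(0,T;X)$, referring to \cite[Prop.~4.1]{PrVe15} for the full statement, and your BDG--H\"older--Fubini estimate of the Slobodeckij seminorm together with the $T$-independent embedding constant is precisely the standard argument behind those citations. The only cosmetic point is that the ``norm'' $\|\sigma\|_{L^{0}(\Omega;L^{p}(0,T;\mathcal{L}_2(H,X)))}$ on the right-hand side should be read as the $L^{p}(\Omega;L^{p})$-norm followed by localization, exactly as you indicate.
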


We let $\mu > 0$ and introduce the following function space
\[
\mathcal{Z} := L^{0}\left(\Omega;L^{\infty}([0, T] ; Z) \cap C^{\mu}([0, T] ; Y)\right).
\]
Throughout this section and in Section~\ref{sec:main_thm}, we choose $u$, $v \in \mathcal{Z}$ be $(\mathcal{F}_t)_{t\in[0,T]}$-adapted stochastic processes.

Recalling \eqref{eq:1.Qspde}, we write 
\begin{equation}
\label{eq:operator}
    u\mapsto A_{v}u=\diver\left(B(v)\nabla u\right).
\end{equation}

Therefore, $A_v$ is a linear time-dependent random operator. In order to highlight this dependence we use the notation 
\begin{equation*}
    A_{v}(t,\omega):=A_{v(t,\omega)}.
\end{equation*}
To simplify the notation, we drop the parameter $\omega$  and simply write $A_v(t)$.
We now collect essential results regarding evolution families. These are extracted from \cite{PrVe15}. For further details regarding evolution systems for nonautonomous operators, we refer to the monographs by Pazy \cite{Pa83} and Yagi \cite{Ya10} as well as to \cite{AT92}.

In order to deal with the time and $\omega$-dependence of $A_{v}$ we impose, as in~\cite{PrVe15}, that the Acquistapace-Terreni conditions hold for every $\omega\in\Omega$. These were introduced in~\cite{AT92} for time-dependent generators and involve a sectoriality condition on $A_v$ together with a suitable H\"older-regularity. 
Since we are dealing with nonlinear generators we additionally impose a certain Lipschitz continuity {assumption}~\cite{Ya10,KuNe18}.

\begin{assumption}\textnormal{(Assumptions generators)}\label{ass:AT}
For $\vartheta\in (\frac{\pi}{2},\pi)$, let $\Sigma_{\vartheta}$ be an open sectorial domain, i.e.
\begin{equation}
\label{eq:sec_dom}
    \Sigma_\vartheta := \{\lambda \in \mathbb{C}:|\arg \lambda|<\vartheta\}.
\end{equation}
\begin{enumerate}
    \item [\mylabel{ass:AT1}{$\mathrm{(A1)}$}] $A_v$ is a sectorial operator on $X$, i.e., there exists a $\vartheta \in (\frac{\pi}{2},\pi)$, such that {for every $(t,\omega)\in [0,T]\times \Omega$, }
\[
\Sigma_{\vartheta} \cup\{0\} \subset \rho(A_{v}(t, \omega)).
\]
    
    \item [\mylabel{ass:AT2}{$\mathrm{(A2)}$}] The resolvent operator $\left(\lambda \mathrm{Id} - A_v\right)^{-1}$ satisfies the Hile-Yosida condition, i.e., there exists a constant $M \geq 1$ such that {for every $(t,\omega)\in [0,T]\times \Omega$, }
\begin{equation}
    \left\|(\lambda \mathrm{Id}-A_{v}(t,\omega))^{-1}\right\|_{\mathcal{L}(X)} \leq \frac{M}{|\lambda|+1},\; \text { for } \lambda \in \rho(A_{v}(t,\omega)).\nonumber
\end{equation}
  \item [\mylabel{ass:AT3}{$\mathrm{(A3)}$}]There exist two exponents $\nu, \delta \in (0,1]$ with $\nu+\delta>1$ such that {for every $\omega \in \Omega$} there exists a constant $L(\omega)\geq 0$ such that for all $s, t \in [0,T]$ 
\begin{equation}
    \left\|(A_{v}(t, \omega))^{\nu}\left(A_{v}(t, \omega)^{-1}-A_{v}(s, \omega)^{-1}\right)\right\|_{\mathcal{L}(X)} \leq L(\omega)|t-s|^{\delta}.\nonumber
\end{equation}
 \item [\mylabel{ass:AT4}{$\mathrm{(A4)}$}]Let $0<\nu \leq 1$ be fixed. Then, {for every $\omega \in \Omega$} there exists a constant $L(\omega) > 0$ such that
\begin{equation} 
    \left\|(A_{u}(t,\omega))^{\nu}\left(A_{u}(t,\omega)^{-1}-A_{v}(t,\omega)^{-1}\right)\right\|_{\mathcal{L}(X)} \leq L({\omega})\|u(t,\omega)-v(t,\omega)\|_{Y}\nonumber.
\end{equation}
\end{enumerate}
\end{assumption}
The conditions \ref{ass:AT1}--\ref{ass:AT3} will be referred to as the (AT) conditions. \\

Since we aim to relate mild and weak solutions for~\eqref{eq:1.Qspde}, we impose similar assumptions on the adjoint $A^\ast_v$ defined on $X^\ast$ of the operator $A_v$ with parameters $\vartheta^\ast, M^\ast, \nu^\ast$ and $\delta^\ast$. {Recall that $X$ is a Hilbert space and we identified it with its dual $X^\ast$.}

\begin{assumption}\textnormal{(Assumptions adjoint operators)}
\label{ass:ATadjoint}
\begin{enumerate}
\item [\mylabel{ass:AT1ad}{$\mathrm{(A1^*)}$}] $A_v^*$ is a sectorial operator, i.e., there exists a $\vartheta^\ast \in (\frac{\pi}{2},\pi)$, such that {for all $(t,\omega)\in [0,T]\times \Omega$,}
\[
\Sigma_{\vartheta^{*}} \cup\{0\} \subset \rho(A_{v}^{*}(t, \omega)),
\]
{where $\Sigma_{\vartheta^\ast}$ is defined as in \eqref{eq:sec_dom}.}    
    
    \item [\mylabel{ass:AT2ad}{$\mathrm{(A2^*)}$}] There exists a constant $M^{*} \geq 1$ such that for every $(t,\omega) \in [0,T] \times \Omega$,
\begin{equation}
    \left\|(\lambda \mathrm{Id}-A_{v}^{*}(t,\omega))^{-1}\right\|_{\mathcal{L}(X)} \leq \frac{M^{*}}{|\lambda|+1},\; \text { for } \lambda \in \rho(A_{v}^{*}(t,\omega)).\nonumber
\end{equation}
  \item [\mylabel{ass:AT3ad}{$\mathrm{(A3^*)}$}]There exist two exponents $\nu^{*}, \delta^{*} \in (0,1]$ with $\nu^{*}+\delta^{*}>1$ such that {for  every $\omega \in \Omega$} there exists a constant $L^*(\omega)\geq 0$ such that for all $s,t \in [0,T]$
\begin{equation}
    \left\|(A_{v}^{*}(t, \omega))^{\nu^*}\left(A_{v}^{*}(t, \omega)^{-1}-A_{v}^{*}(s, \omega)^{-1}\right)\right\|_{\mathcal{L}(X)} \leq L^{*}(\omega)|t-s|^{\delta^{*}}.\nonumber
\end{equation}
 \item [\mylabel{ass:AT4ad}{$\mathrm{(A4^*)}$}]Let $0<\nu^{*} \leq 1$ be fixed. Then, for every $\omega \in \Omega$ there exists a constant $L^{*}(\omega) > 0$ such that
\begin{equation} 
    \left\|(A^{*}_{u}(t,\omega))^{\nu^*}\left(A_{u}^{*}(t,\omega)^{-1}-A_{v}^{*}(t,\omega)^{-1}\right)\right\|_{\mathcal{L}(X)} \leq L^{*}({\omega})\|u(t,\omega)-v(t,\omega)\|_{Y}\nonumber.
\end{equation}
\end{enumerate}
\end{assumption}

{We refer the reader to Appendix~\ref{sec:frac_pow_dom} for details on the fractional power of the operator $A_u$ and its adjoint.}

\begin{remark}
\label{rem:dis_ass}
\begin{enumerate}
    \item [\textnormal{(1)}] We assume that the random variables $L(\omega)$, $L^{*}(\omega)$ are uniformly bounded with respect to $\omega$. This assumption can be dropped by a suitable localization argument, see \cite[Section 5.3]{PrVe15}.

\item [\textnormal{(2)}] The assumption \ref{ass:AT1} implies that $-A_{v}$ is a sectorial operator. Alternatively, one can assume as in \cite{Ya10}, that
    the spectrum of  $A_{v}(t,\omega)$ is contained in an open sectorial domain with angle $0<\varphi<\frac{\pi}{2}$ for every $t\in[0,T]$ and $\omega\in\Omega$, i.e.
\begin{equation}\label{ass:sectorial}
    \sigma(A_{v}(t,\omega))\subset \Sigma_{\varphi}:=\{\lambda \in \mathbb{C}:|\arg \lambda|<\varphi\},\nonumber
\end{equation}
which would imply that $A_v$ is a sectorial operator.

\item [\textnormal{(3)}] {Examples of operators satisfying \textbf{Assumptions~\ref{ass:AT} and \ref{ass:ATadjoint}} are given in Section~\ref{sec:example}.}
\end{enumerate}
\end{remark}
 
\begin{assumption}\textnormal{(Constant domains)}
\label{ass:const_dom}
{For simplicity we assume that the domains of $A_v$ and $A^{\ast}_v$ are constant, i.e. \ref{ass:AT3} and  \ref{ass:AT3ad} are satisfied for $\nu=1$ and $\nu^\ast = 1$, respectively. These conditions are called in literature the Kato--Tanabe assumptions \cite{AT92,PrVe15}.}
\end{assumption}

\begin{definition}
\label{defn:dom}
To emphasize the fact that we are working with constant domains, we introduce the following notations for $t \in [0,T]$ and $\omega \in \Omega$$\colon$
\begin{align*}
 \mathcal{D}_{v}& := D(-A_{v}(t,\omega)), \qquad \qquad \quad \;\; \mathcal{D}_{v}^{\alpha}  := D\left((-A_{v}(t,\omega))^{\alpha}\right),\\
 \mathcal{D}_{v}^{*}& := D\left((-A_{v}(t,\omega))^{*}\right), \qquad \qquad \mathcal{D}_{v}^{\alpha\ast}:=D\left(\left(\left(-A_{v}(t,\omega)\right)^{\alpha}\right)^\ast\right).
\end{align*}
\end{definition}

In general, conditions \ref{ass:AT1} and \ref{ass:AT2} can be difficult to verify for a given system, but in the Hilbert space framework, it suffices to apply the following criterion. According to \cite[Chapter~2.1]{Ya10}, we can associate to $-A_v$ a bilinear form $a(v; \cdot, \cdot)$ on a separable Hilbert space $V$ which is densely and continuously embedded in $X$. More precisely, we set
\begin{equation}
\label{eq:sesquilinear}
a(v; {w_1}, {w_2}) := \langle -A_v {w_1}, {w_2} \rangle_X,\qquad \mbox{ for all }\, {w_1}, {w_2} \in V.
\end{equation}
In this case, in order to verify \ref{ass:AT1} and \ref{ass:AT2} it suffices to show that  \cite[Chapter~2.1.1]{Ya10}
\begin{align}
    &a(v; {w},{w})\geq \kappa \|{w}\|_{V}^{2},\qquad \qquad \;\quad \;\; \quad \forall\, {w} \in V,\label{blf:coercivity}\\
    &\vert a(v; {w_1}, {w_2})\vert\leq M\|{w_1}\|_{V}\|{w_2}\|_{V},\qquad \forall\, {w_1}, {w_2}\in V,\label{blf:continuity}
\end{align}
for some constants $\kappa >0$ and $M> 0$. 

{The assumptions} \ref{ass:AT1}--\ref{ass:AT3} {allow us to} apply pathwise the deterministic results {from}~\cite{AT92} for the generation of an evolution family for nonautonomous operators and obtain as in~\cite[Theorem 2.2]{PrVe15} the following statement.

\begin{theorem}
\label{thm:semigp}
Let $\varDelta:=\{(s,t)\in [0,T]^{2}\,:\,s\leq t\}$. Assume that \ref{ass:AT1}--\ref{ass:AT3} hold true for the linear {nonautonomous} operator $A_{v}(t,\omega)$. Then, there exists a unique map $U^{v}\,:\,\varDelta\times\Omega\xrightarrow{} \mathcal{L}(X)$ such that
\begin{itemize}
    \item[\textnormal{(T1)}] for all $t \in[0, T]$, $U^{v}(t, t) = {\mathrm{Id}}$;
    \item[\textnormal{(T2)}] for all $r \leq s \leq t$, $U^{v}(t, s) U^{v}(s, r) = U^{v}(t, r)$;
    \item[\textnormal{(T3)}] for every $\omega \in \Omega$, the map $U(\cdot, \cdot, \omega) \colon \varDelta \to \mathcal{L}(X)$ is strongly continuous;
    \item[\textnormal{(T4)}] there exists a mapping $C: \Omega \rightarrow \mathbb{R}_{+},$ such that for all $s \leq t,$ one has
$$
\|U^{v}(t, s)\|_{\mathcal{L}(X)} \leq C;
$$
    \item[\textnormal{(T5)}] for every $s<t$ it holds pointwise in $\Omega$ that 
    \begin{equation}
    \label{eq:dt-semigroup}
       \frac{\partial}{\partial t} U^{v}(t, s)=A_{v}(t) U^{v}(t, s). 
    \end{equation}
    Moreover, there exists a mapping $C: \Omega \rightarrow \mathbb{R}_{+}$ such that
$$
\|A_{v}(t) U^{v}(t, s)\|_{\mathcal{L}(X)} \leq C(t-s)^{-1}.
$$
\end{itemize}
\end{theorem}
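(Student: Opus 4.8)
The plan is to obtain Theorem~\ref{thm:semigp} by applying, \emph{pathwise in} $\omega$, the classical generation theorem for nonautonomous parabolic evolution families of Acquistapace and Terreni~\cite{AT92}, and then to recover the $\omega$-measurability of the resulting family by an approximation argument. First I would fix $\omega\in\Omega$ and observe that $t\mapsto A_v(t,\omega)=A_{v(t,\omega)}$ is then a deterministic family of closed, densely defined operators on $X$ for which \ref{ass:AT1}--\ref{ass:AT3} are \emph{precisely} the Acquistapace--Terreni hypotheses: \ref{ass:AT1} and \ref{ass:AT2} say that each $A_v(t,\omega)$ is sectorial of angle $\vartheta>\tfrac{\pi}{2}$ with a resolvent bound uniform in $t$, so that $-A_v(t,\omega)$ generates an analytic semigroup $e^{\tau A_v(t,\omega)}$; and \ref{ass:AT3}, together with the exponent condition $\nu+\delta>1$, supplies the H\"older regularity in time of the resolvents needed for the parametrix construction. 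Note that \ref{ass:AT4} is not used here, since it concerns the dependence on two distinct processes, whereas the generation of $U^v$ involves only the single family $A_v$; under the constant-domain hypothesis of Assumption~\ref{ass:const_dom} ($\nu=1$, Kato--Tanabe) the construction simplifies accordingly.

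With these deterministic hypotheses in force, the main results of~\cite{AT92} (see also~\cite{Ya10}) yield, for each $\omega$, a unique two-parameter family $U^v(\cdot,\cdot,\omega)\colon\varDelta\to\mathcal{L}(X)$ satisfying the algebraic identities \textnormal{(T1)}, \textnormal{(T2)}, the strong continuity \textnormal{(T3)}, the differentiability relation \eqref{eq:dt-semigroup} in \textnormal{(T5)}, and the smoothing estimate $\|A_v(t,\omega)U^v(t,s,\omega)\|_{\mathcal{L}(X)}\le C(\omega)(t-s)^{-1}$ for $s<t$. Tracing the constants through that construction, $C(\omega)$ depends only on the fixed data $\vartheta,M,\nu,\delta$, on the H\"older constant $L(\omega)$ from \ref{ass:AT3}, and on $T$; the uniform bound \textnormal{(T4)}, $\|U^v(t,s,\omega)\|_{\mathcal{L}(X)}\le C(\omega)$, then follows by integrating \eqref{eq:dt-semigroup} in $t$ and using the uniform boundedness of the frozen-time semigroups $e^{\tau A_v(s,\omega)}$. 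By Remark~\ref{rem:dis_ass}(1), i.e.\ the uniform boundedness of $L(\omega)$ in $\omega$, these constants can be taken independent of $\omega$; in general they are finite-valued random variables, which is exactly what \textnormal{(T4)}--\textnormal{(T5)} assert.

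It then remains to upgrade the pathwise statement to a measurable one, i.e.\ to check that $(s,t,\omega)\mapsto U^v(t,s,\omega)$ is strongly measurable on $\varDelta\times\Omega$. The idea is that the Acquistapace--Terreni evolution family is assembled from the resolvents $(\lambda\,\mathrm{Id}-A_v(t,\omega))^{-1}$ via Dunford--Taylor contour integrals, a Volterra-type fixed-point iteration for the associated parametrix, and strong operator limits, and each of these operations preserves strong measurability in $\omega$. Since $v\in\mathcal{Z}$ is $(\mathcal{F}_t)_{t\in[0,T]}$-adapted and strongly measurable, and since the dependence of $A_{(\cdot)}$ on its argument through the bilinear form \eqref{eq:sesquilinear} (cf.\ the Lipschitz continuity underlying \ref{ass:AT4}) makes $\omega\mapsto(\lambda\,\mathrm{Id}-A_v(t,\omega))^{-1}\in\mathcal{L}(X)$ strongly measurable, the strong measurability of $U^v$ in $\omega$ follows; combined with the pathwise strong continuity \textnormal{(T3)} this gives joint strong measurability on $\varDelta\times\Omega$. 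Uniqueness is inherited pathwise from the deterministic uniqueness in~\cite{AT92}. I expect the only genuinely non-routine part to be this last measurability bookkeeping: analytically everything is already contained in the deterministic theory, and the argument is a verbatim adaptation of the proof of~\cite[Thm.~2.2]{PrVe15} in the semilinear case, with $u$ (resp.\ $v$) frozen so that $A_v$ becomes a merely nonautonomous linear generator.
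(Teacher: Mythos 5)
Your proposal matches the paper's treatment: the paper does not give an independent proof of this theorem but obtains it exactly as you describe, by applying the deterministic Acquistapace--Terreni generation results pathwise in $\omega$ under \ref{ass:AT1}--\ref{ass:AT3} and citing \cite[Theorem~2.2]{PrVe15} for the stochastic formulation. Your additional measurability discussion is consistent with the paper, which defers that point to the separately stated Proposition~\ref{prop:meas} (from \cite[Prop.~2.4]{PrVe15}).
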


In \cite[Prop.~2.4]{PrVe15} the following measurability result for the evolution family $U^v$ was established. This fact prevents us from defining the stochastic convolution as an It\^o-integral.

\begin{proposition}
\label{prop:meas}
 The evolution system $U^{v}: \varDelta \times \Omega \rightarrow \mathcal{L}(X)$ is strongly measurable in the uniform operator topology. Moreover, for each $t \geq s,$ the mapping $\omega \mapsto U^{v}(t, s, \omega) \in \mathcal{L}(X)$ is strongly $\mathcal{F}_{t}$-measurable in the uniform operator topology.
\end{proposition}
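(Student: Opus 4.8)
The plan is to retrace the Acquistapace--Terreni construction of the evolution family $U^v$ underlying Theorem~\ref{thm:semigp} and to check that every building block in that construction depends measurably on $\omega$, and jointly Borel-measurably on $(s,t)\in\varDelta$, with respect to the uniform operator topology. The only structural difference from the semilinear, purely nonautonomous setting of \cite[Prop.~2.4]{PrVe15} is that the generator $A_v(t,\omega)$ now carries an additional dependence on the process $v\in\mathcal{Z}$; since $v$ is jointly measurable and $(\mathcal{F}_t)$-adapted, this extra dependence is harmless, and the argument of \cite{PrVe15} goes through essentially verbatim once this point is recorded.

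\emph{Step 1 (generators and semigroups).} First I would note that, $B$ being continuous and $v$ jointly measurable, the sesquilinear form $a(v(t,\omega);\cdot,\cdot)$ from \eqref{eq:sesquilinear} depends measurably on $(t,\omega)$; by \ref{ass:AT1}--\ref{ass:AT2} and a Lax--Milgram argument combined with a Neumann series, for each fixed $\lambda\in\Sigma_\vartheta\cup\{0\}$ the resolvent $(\lambda\,\mathrm{Id}-A_v(t,\omega))^{-1}$ is then a measurable, uniformly bounded function of $(t,\omega)$ valued in $\mathcal{L}(X)$ with its norm topology, and analytic in $\lambda$. Crucially, $A_v(t,\omega)$ depends on $\omega$ only through $v(t,\omega)$, so for $t\le T$ the map $\omega\mapsto(\lambda\,\mathrm{Id}-A_v(t,\omega))^{-1}$ is $\mathcal{F}_t$-measurable because $v$ is adapted. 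The analytic semigroups are recovered through the Dunford integral $e^{\tau A_v(s,\omega)}=\tfrac{1}{2\pi i}\int_\Gamma e^{\lambda\tau}(\lambda\,\mathrm{Id}-A_v(s,\omega))^{-1}\,\rd\lambda$ along a sectorial contour $\Gamma$; since the integrand is norm-continuous in $\lambda$ and decays exponentially along $\Gamma$, this integral is an operator-norm limit of Riemann sums of the resolvents just discussed, so $(s,\tau,\omega)\mapsto e^{\tau A_v(s,\omega)}$ and $(s,\tau,\omega)\mapsto A_v(s,\omega)e^{\tau A_v(s,\omega)}$ are strongly measurable in the uniform operator topology and $\mathcal{F}_s$-measurable in $\omega$.

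\emph{Step 2 (the evolution family).} The Acquistapace--Terreni formula represents $U^v(t,s)$ as $e^{(t-s)A_v(s)}$ plus a correction given by an operator-norm absolutely convergent series of iterated Bochner integrals of kernels of the type $\bigl(A_v(r_1,\omega)-A_v(r_2,\omega)\bigr)e^{(r_1-r_2)A_v(r_2,\omega)}$, whose norms are controlled by the (AT) conditions \ref{ass:AT1}--\ref{ass:AT3} together with the Kato--Tanabe simplification of Assumption~\ref{ass:const_dom}. By Step 1 each integrand is jointly measurable in all of its parameters in the uniform operator topology; Bochner integration and uniform limits of measurable functions preserve measurability, which yields the joint strong measurability of $(s,t,\omega)\mapsto U^v(t,s,\omega)$ asserted in the first claim. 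For the second claim, observe that the entire series defining $U^v(t,s,\omega)$ uses only the values $v(r,\omega)$ with $r\in[s,t]$; as $v$ is adapted, each $v(r,\cdot)$ is $\mathcal{F}_r$- and a fortiori $\mathcal{F}_t$-measurable for $r\le t$, and since measurability is stable under the countably many operations (resolvents, contour integrals, Bochner integrals, series) used above, $\omega\mapsto U^v(t,s,\omega)$ is $\mathcal{F}_t$-measurable. The same reasoning applied under Assumption~\ref{ass:ATadjoint} gives the analogous statement for the adjoint evolution family $U^{v,\ast}$.

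\emph{Main obstacle.} The delicate point is that $\mathcal{L}(X)$ with the uniform operator topology is non-separable, so one cannot invoke Pettis' theorem to move between weak and strong measurability; instead one must verify at every stage that the operator-valued maps in question are genuine operator-norm limits of countably-valued simple functions, which is exactly what the operator-norm convergence of the Dunford integrals, the Bochner integrals, and the Acquistapace--Terreni series provides. A second, related nuisance is that $U^v$ is only strongly — not norm — continuous up to the diagonal $\{s=t\}$, so the approximation near the diagonal has to be organized by first treating $t-s$ bounded below by a positive constant, where norm-continuity of $U^v$ holds, and then passing to the limit $t\downarrow s$ along a countable set using $U^v(t,t)=\mathrm{Id}$. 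Making these approximation statements quantitative via the bounds of Theorem~\ref{thm:semigp} and Assumption~\ref{ass:AT} is the bulk of the work; the remainder is bookkeeping.
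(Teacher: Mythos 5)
The paper does not prove this proposition itself --- it is quoted verbatim from \cite[Prop.~2.4]{PrVe15} --- and your argument is essentially a reconstruction of the proof given there: one tracks strong measurability in the uniform operator topology through each stage of the Acquistapace--Terreni construction (resolvents, Dunford integrals for the analytic semigroups, the Volterra-type correction series), and the $\mathcal{F}_t$-measurability in $\omega$ follows because $U^v(t,s,\omega)$ is built only from the adapted values $v(r,\omega)$, $r\in[s,t]$. Your proposal is correct and takes the same route as the cited proof, including the correct identification of the two genuine technical points (non-separability of $\mathcal{L}(X)$ in the norm topology and the lack of norm-continuity at the diagonal).
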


\begin{remark}
If one replaces~\ref{ass:AT1} by \eqref{ass:sectorial}, then \eqref{eq:dt-semigroup} becomes
\[\frac{\partial}{\partial t} U^{v}(t, s) = - A_{v}(t) U^{v}(t, s).\]
\end{remark}

In the following, we point out spatial- and time-regularity results of the evolution family $U^{v}$, \textit{cf.} \cite[Lemma~2.6]{PrVe15}. For the convenience of the reader, we provide a brief overview on fractional powers of sectorial operators in Appendix~\ref{sec:frac_pow_dom}.

\begin{lemma}\label{lem:Semigroup estimates} Let conditions \ref{ass:AT1}--\ref{ass:AT3} be satisfied by  the linear operator $A_{v}(t,\omega)$. Then, there exists a mapping $C\colon \Omega \rightarrow \R_{+}$ such that for all $\,0\leq s<t\leq T$, $\theta\in [0,1]$, $\lambda\in(0,1)$ and $\gamma \in [0,\delta)$, the following estimates are valid
\begin{align}
&\left\|U^{v}(t,s)\left(-A_{v}(s)\right)^{\gamma}x\right\|_{X}\leq C \dfrac{\left\|x\right\|_{X}}{(t-s)^{\gamma}},\qquad x\in \mathcal{D}^{\gamma}_{v},\label{eq:UAg}\\ &\left\|(-A_{v}(t))^{\theta}U^{v}(t,s)(-A_{v}(s))^{-\theta}\right\|_{\mathcal{L}(X)}\leq C \label{eq:AgUA-g}.
\end{align}
Moreover, for $(s,t)\in \varDelta$, the map
\[ 
(s,t)\mapsto (-A_{v}(t))^{\theta}U^{v}(t,s)(-A_{v}(s))^{-\theta}
\]
is strongly continuous.
\end{lemma}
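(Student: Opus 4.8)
The plan is to obtain both estimates by writing the relevant operators through the Dunford (Cauchy) integral representation of the evolution family, exactly as in the deterministic Acquistapace--Terreni theory, but noting that under Assumption~\ref{ass:const_dom} (constant domains, i.e. the Kato--Tanabe setting) all the fractional-power domains $\mathcal{D}_v^\alpha$ are independent of $t$ and $\omega$, and all estimates hold pathwise in $\Omega$. Throughout I would fix $\omega\in\Omega$ and suppress it, absorbing the $\omega$-dependence of the Acquistapace--Terreni constants into a single map $C\colon\Omega\to\R_+$ at the end. The two ingredients I would quote as standard (pathwise, from \cite{AT92} and as collected in \cite{PrVe15}) are: the analytic-semigroup-type bound $\|(-A_v(t))^\theta e^{\tau A_v(t)}\|_{\mathcal{L}(X)}\le C\tau^{-\theta}$ uniformly in $t$, together with the representation $U^v(t,s)=e^{(t-s)A_v(s)}+\int_s^t U^v(t,r)R(r,s)\,\dr$ where the remainder kernel $R$ satisfies $\|R(r,s)\|_{\mathcal{L}(X)}\le C(r-s)^{\delta-1}$ — this last decay is precisely where $\gamma<\delta$ will be used.

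For \eqref{eq:UAg}: commuting $(-A_v(s))^\gamma$ through the representation, the leading term is $(-A_v(s))^\gamma e^{(t-s)A_v(s)}x$, whose norm is bounded by $C(t-s)^{-\gamma}\|x\|_X$ by the semigroup smoothing estimate. The remainder term is $\int_s^t U^v(t,r)R(r,s)(-A_v(s))^\gamma x\,\dr$; here I would instead move $(-A_v(s))^\gamma$ onto $R(r,s)$ — in the Kato--Tanabe case the kernel $R(r,s)$ maps $X$ into $\mathcal{D}_v$ with the stronger bound $\|(-A_v(r))R(r,s)\|_{\mathcal{L}(X)}\le C(r-s)^{\delta-1}$, hence $\|(-A_v(s))^\gamma R(r,s)\|_{\mathcal{L}(X)}\le C(r-s)^{\delta-1-\gamma}$ after interpolating and using the $\gamma$-th power — using (T4) to bound $\|U^v(t,r)\|_{\mathcal{L}(X)}\le C$, the integral is controlled by $C\int_s^t (r-s)^{\delta-1-\gamma}\,\dr = C(t-s)^{\delta-\gamma}/(\delta-\gamma)$, which for $t-s\le T$ is dominated by $C(t-s)^{-\gamma}$. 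Summing the two contributions gives \eqref{eq:UAg}. For \eqref{eq:AgUA-g}: write $(-A_v(t))^\theta U^v(t,s)(-A_v(s))^{-\theta}$ and split $U^v$ again; the frozen part is $(-A_v(t))^\theta e^{(t-s)A_v(s)}(-A_v(s))^{-\theta}$, which one rewrites as $(-A_v(t))^\theta(-A_v(s))^{-\theta}\cdot(-A_v(s))^\theta e^{(t-s)A_v(s)}(-A_v(s))^{-\theta}$; the first factor is bounded by \ref{ass:AT3} with $\nu=1$ together with the moment inequality for fractional powers (boundedness of $(-A_v(t))^\theta(-A_v(s))^{-\theta}$ follows from the constant-domain assumption, cf. Appendix~\ref{sec:frac_pow_dom}), and the second factor equals $e^{(t-s)A_v(s)}$ in norm $\le C$ by analyticity. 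The remainder term is treated as before, using $\|(-A_v(t))^\theta R(\cdot,\cdot)\|\le C(\cdot)^{\delta-1-\theta}$ only if $\theta<\delta$; since $\theta\in[0,1]$ may exceed $\delta$, instead I would absorb $(-A_v(t))^\theta$ onto $U^v(t,r)$ via the already-established \eqref{eq:UAg}-type bound $\|(-A_v(t))^\theta U^v(t,r)\|_{\mathcal{L}(X)}\le C(t-r)^{-\theta}$ for $\theta<1$ (proved for the full range by the same Dunford-integral argument, this being the genuinely standard analytic estimate), which leaves $\int_s^t (t-r)^{-\theta}(r-s)^{\delta-1}\,\dr\cdot\|(-A_v(s))^{-\theta}\|$ — a Beta-function integral bounded by $C(t-s)^{\delta-\theta}$, hence by a constant on $[0,T]$ provided one also controls the case $\theta=1$ separately by the $\mathcal{D}_v$-mapping property of $R$.

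For the final continuity claim, I would argue that on $\varDelta$ the map $(s,t)\mapsto(-A_v(t))^\theta U^v(t,s)(-A_v(s))^{-\theta}$ is strongly continuous because each of the three factors depends continuously (in the appropriate operator topology) on its time arguments: $(s,t)\mapsto U^v(t,s)$ is strongly continuous by (T3); $t\mapsto(-A_v(t))^\theta$ composed with a fixed domain element depends continuously by \ref{ass:AT3} with $\nu=1$ (the constant-domain Hölder condition); and the intertwining bound \eqref{eq:AgUA-g} just proved provides the uniform boundedness needed to pass limits through products of strongly continuous operator families. Concretely, for fixed $x\in X$ one writes the difference at $(s,t)$ and $(s',t')$ as a telescoping sum, inserts $(-A_v(s))^{-\theta}x\in\mathcal{D}_v^\theta$, and estimates each resulting term using the uniform bounds plus strong continuity; the diagonal behaviour near $t=s$ is harmless since the bound \eqref{eq:AgUA-g} is uniform up to $t=s$.

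The main obstacle I anticipate is the bookkeeping around the exponent $\theta\in[0,1]$ in \eqref{eq:AgUA-g}: unlike \eqref{eq:UAg} where the restriction $\gamma<\delta$ is built in, the remainder kernel $R(r,s)$ only has Hölder decay of order $\delta-1$, so for $\theta\ge\delta$ one cannot simply pull $(-A_v(t))^\theta$ onto $R$ — one must instead route the fractional power through $U^v(t,r)$ using the analytic smoothing estimate $\|(-A_v(t))^\theta U^v(t,r)\|\le C(t-r)^{-\theta}$ valid for $\theta<1$, and handle $\theta=1$ by the Kato--Tanabe property that $R$ is $\mathcal{D}_v$-valued; keeping the resulting Beta-type integrals convergent (they are, since $\theta<1$ and $\delta>0$, because $\nu+\delta>1$ with $\nu=1$ forces nothing extra but $\int_s^t(t-r)^{-\theta}(r-s)^{\delta-1}\dr<\infty$ for $\theta<1$) and uniform on $[0,T]$ is the delicate point. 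Everything else is a pathwise transcription of the deterministic Acquistapace--Terreni estimates, with the $\omega$-dependence entering only through the constants $L(\omega)$, which by Remark~\ref{rem:dis_ass}(1) we take uniformly bounded.
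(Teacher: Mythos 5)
The paper gives no proof of this lemma; it is imported verbatim from \cite[Lemma~2.6]{PrVe15}, i.e., from the pathwise application of the deterministic Acquistapace--Terreni/Kato--Tanabe estimates. Your reconstruction via the parametrix representation $U^{v}(t,s)=e^{(t-s)A_{v}(s)}+\int_{s}^{t}U^{v}(t,r)R(r,s)\,\dr$ is exactly the standard route underlying that citation, and your treatment of \eqref{eq:UAg} is essentially correct, modulo one piece of sloppiness: the object you must bound in the remainder is $R(r,s)(-A_{v}(s))^{\gamma}$, not $(-A_{v}(s))^{\gamma}R(r,s)$; it is the former that inherits the $(r-s)^{\delta-1-\gamma}$ decay from \ref{ass:AT3} (with $\nu=1$), and this is precisely where $\gamma<\delta$ enters.

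The genuine gap is in \eqref{eq:AgUA-g}. In the remainder term you discard $(-A_{v}(s))^{-\theta}$ as a merely bounded factor and arrive at $\int_{s}^{t}(t-r)^{-\theta}(r-s)^{\delta-1}\,\dr\le C(t-s)^{\delta-\theta}$, which you then assert is ``bounded by a constant on $[0,T]$''. It is not: for $\theta\in(\delta,1)$ the exponent $\delta-\theta$ is negative and the bound blows up as $t-s\to 0$, so your argument fails to deliver the uniform bound of the lemma in exactly the regime you yourself flagged as delicate. The repair is to keep $(-A_{v}(s))^{-\theta}$ attached to the kernel: in the constant-domain case one has, schematically, $R(r,s)(-A_{v}(s))^{-\theta}=[A_{v}(r)A_{v}(s)^{-1}-\mathrm{Id}]\,(-A_{v}(s))^{1-\theta}e^{(r-s)A_{v}(s)}$, whose norm is at most $C(r-s)^{\delta-1+\theta}$ by \ref{ass:AT3} with $\nu=1$, and then $\int_{s}^{t}(t-r)^{-\theta}(r-s)^{\delta-1+\theta}\,\dr\le C(t-s)^{\delta}$ is uniformly bounded. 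Alternatively, and this is how the estimate is usually obtained, one proves the endpoint $\theta=1$ directly and interpolates against $\theta=0$, which is (T4). In either route, the step $\|(-A_{v}(t))^{\theta}(-A_{v}(s))^{-\theta}\|_{\mathcal{L}(X)}\le C$ for $\theta\in(0,1)$ requires a Heinz--Kato-type argument rather than the inclusions of Appendix~\ref{sec:frac_pow_dom}, which only give $D(A^{\alpha'})\subset D(B^{\alpha})$ for $\alpha<\alpha'$; this is available here because $X$ is a Hilbert space and $A_{v}$ comes from a coercive form, but it should be made explicit. Your continuity argument and the pathwise handling of the $\omega$-dependent constants are fine.
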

The following result, see \cite[Lemma~2.7]{PrVe15} for the proof, allows one to improve the regularity of the evolution family $U^v$, provided that the adjoint operator $A^\ast_v$ satisfies the assumptions \ref{ass:AT1ad}--\ref{ass:AT3ad}. 

\begin{lemma}\label{lem:Adjoint estimates}  
Let $A_{v}(t,\omega)$ and $A^\ast_v(t,\omega)$ satisfy \ref{ass:AT1}--\ref{ass:AT3} and \ref{ass:AT1ad}--\ref{ass:AT3ad}, respectively. Then, for every $t\in(0,T]$, the map $s\mapsto U^{v}(t,s)$ belongs to $C^{1}\left([0,t),\mathcal{L}(X)\right)$, and for every $x\in \mathcal{D}_{v}$ and  $\omega\in\Omega$, it holds that
\begin{equation}
\label{eq:ds-semigroup}
\dfrac{\partial}{\partial s}U^{v}(t,s)x = -U^{v}(t,s)A_{v}(s)x. 
\end{equation} 
Moreover, for $\beta\in [0,1]$, $0<\gamma<\delta^{*}$, $0 \leq \theta< \delta^{*}$, $\mu \in(0,1)$, $\lambda\in(0,1)$, the following inequalities hold:
\begin{align}
    &\left\|U^{v}(t, s)(-A_{v}(s))^{\beta} x\right\|_{X} \leq C \frac{\|x\|_{X}}{(t-s)^{\beta}},\qquad x \in \mathcal{D}_{v}^{\beta},\label{eq:UAb} \\
    &\left\|U^{v}(t, s)(-A_{v}(s))^{1+\theta} x\right\|_{X} \leq C \frac{\|x\|_{X}}{(t-s)^{1+\theta}},\qquad x \in \mathcal{D}_{v}^{1+\theta},\label{eq:UA1+b}\\
    &\left\|(-A_{v}(t))^{-\lambda} U^{v}(t, s)(-A_{v}(s))^{1+\gamma} x\right\|_{X} \leq C \frac{\|x\|_{X}}{(t-s)^{1+\gamma-\lambda}}, \quad x \in \mathcal{D}_{v}^{1+\gamma}\label{eq:A-gUA1+b}.
\end{align}
\end{lemma}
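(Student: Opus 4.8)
The plan is to reduce the lemma to the ``forward'' results already at our disposal (Theorem~\ref{thm:semigp} and Lemma~\ref{lem:Semigroup estimates}), but applied to $A_v^{*}$ instead of $A_v$, and then to transfer everything back to $U^{v}$ by duality; this is the pathwise analogue of the semilinear computation in \cite[Lemma~2.7]{PrVe15}. Concretely, I would fix $\omega\in\Omega$ and $t\in(0,T]$ and introduce the time-reversed adjoint generator $\widetilde A(\tau):=A_v^{*}(t-\tau,\omega)$ on $X$, $\tau\in[0,t]$. Time reversal leaves the H\"older estimate intact, so \ref{ass:AT1ad}--\ref{ass:AT3ad} together with Assumption~\ref{ass:const_dom} imply that $\widetilde A$ satisfies the (AT) conditions on $[0,t]$, with constant domain $\mathcal{D}_v^{*}$ and relevant H\"older exponent $\delta^{*}$. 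Applying Theorem~\ref{thm:semigp} and Lemma~\ref{lem:Semigroup estimates} pathwise to $\widetilde A$ yields an evolution family $\widetilde U$ on $\{(\sigma,\tau):0\le\sigma\le\tau\le t\}$ satisfying (T1)--(T5) and the estimates \eqref{eq:UAg}, \eqref{eq:AgUA-g}, together with the standard companion smoothing bounds of \cite{AT92} for fractional powers at the target time; the joint measurability of $\widetilde U$ is obtained exactly as in Proposition~\ref{prop:meas}.

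Next I would establish the duality identity $\bigl(U^{v}(r,s,\omega)\bigr)^{*}=\widetilde U(t-s,t-r,\omega)$ for $0\le s\le r\le t$, and in particular $\bigl(U^{v}(t,s)\bigr)^{*}=\widetilde U(t-s,0)$. Since $\tau\mapsto\widetilde U(\tau,\sigma)$ solves $\partial_\tau W=\widetilde A(\tau)W$, $W(\sigma)=\mathrm{Id}$, by (T1) and (T5) for $\widetilde U$, taking Hilbert-space adjoints and substituting $s=t-\sigma$, $r=t-\tau$ shows that $s\mapsto\widetilde U(t-s,t-r)^{*}$ solves the backward problem generated by $\widetilde A(t-s)^{*}=A_v(s)$ with value $\mathrm{Id}$ at $s=r$; by the uniqueness built into the Acquistapace--Terreni construction, applied pathwise, this is exactly $U^{v}(r,\cdot)$. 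From this, $s\mapsto U^{v}(t,s)=\widetilde U(t-s,0)^{*}$ inherits from (T5) for $\widetilde U$ the regularity $C^{1}\bigl([0,t),\mathcal{L}(X)\bigr)$ (note that $\widetilde A(t-s)\widetilde U(t-s,0)$ is bounded, with norm $\le C(t-s)^{-1}$), and for $x\in\mathcal{D}_v$,
\[
\partial_s U^{v}(t,s)x=-\bigl(\widetilde A(t-s)\widetilde U(t-s,0)\bigr)^{*}x=-\widetilde U(t-s,0)^{*}A_v(s)x=-U^{v}(t,s)A_v(s)x,
\]
where the middle step uses $\langle A_v(s)x,\widetilde U(t-s,0)y\rangle_X=\langle x,\widetilde A(t-s)\widetilde U(t-s,0)y\rangle_X$, valid because $x\in\mathcal{D}_v$ and $\widetilde U(t-s,0)y\in\mathcal{D}_v^{*}$. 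This proves \eqref{eq:ds-semigroup}.

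The three estimates then follow by dualizing. As $U^{v}(t,s)\in\mathcal{L}(X)$ we have $\bigl(U^{v}(t,s)(-A_v(s))^{\beta}\bigr)^{*}=\bigl((-A_v(s))^{*}\bigr)^{\beta}\widetilde U(t-s,0)=\bigl(-\widetilde A(t-s)\bigr)^{\beta}\widetilde U(t-s,0)$, so \eqref{eq:UAb} is equivalent to $\bigl\|\bigl(-\widetilde A(t-s)\bigr)^{\beta}\widetilde U(t-s,0)\bigr\|_{\mathcal{L}(X)}\le C(t-s)^{-\beta}$, $\beta\in[0,1]$, which follows from the moment inequality for $-\widetilde A(t-s)$ (constant domains) by interpolating between (T4) at $\beta=0$ and (T5) at $\beta=1$ for $\widetilde U$. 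In the same way \eqref{eq:UA1+b} and \eqref{eq:A-gUA1+b} reduce to the target-time bounds $\bigl\|\bigl(-\widetilde A(\tau)\bigr)^{1+\theta}\widetilde U(\tau,\sigma)\bigr\|_{\mathcal{L}(X)}\le C(\tau-\sigma)^{-(1+\theta)}$ and $\bigl\|\bigl(-\widetilde A(\tau)\bigr)^{1+\gamma}\widetilde U(\tau,\sigma)\bigl(-\widetilde A(\sigma)\bigr)^{-\lambda}\bigr\|_{\mathcal{L}(X)}\le C(\tau-\sigma)^{-(1+\gamma-\lambda)}$ for $0\le\theta,\gamma<\delta^{*}$, $\lambda\in(0,1)$; I would obtain these by writing $\widetilde U(\tau,\sigma)=\widetilde U(\tau,m)\widetilde U(m,\sigma)$ with $m=(\tau+\sigma)/2$, using that $\widetilde U(\tau,m)$ maps into $D\bigl((-\widetilde A(\tau))^{1+\theta}\bigr)$ for $\theta<\delta^{*}$ so that the fractional power is meaningful, distributing powers across the evolution law by means of \eqref{eq:AgUA-g} and \eqref{eq:UAg} for $\widetilde U$, and absorbing the remaining first-order factor via (T5).

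I expect the main obstacle to be the duality identity, i.e.\ carrying to the pathwise setting the deterministic Acquistapace--Terreni fact that the evolution family is differentiable in its second argument with derivative governed by the \emph{adjoint} generator (and that the resulting backward problem has a unique solution), together with the accompanying joint measurability; once this is in place, the rest is bookkeeping. A secondary difficulty is tracking the exponent ranges ($\delta$ versus $\delta^{*}$, the constraints $\theta,\gamma<\delta^{*}$ and $\lambda<1$) and the domains in \eqref{eq:UA1+b}--\eqref{eq:A-gUA1+b}, where the fractional powers must be split across the evolution law compatibly with the smoothing of $\widetilde U$.
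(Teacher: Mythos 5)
Your proposal is correct and follows essentially the same route the paper relies on: the paper gives no in-text proof but defers to \cite[Lemma~2.7]{PrVe15}, whose argument is exactly your time-reversed adjoint family $\widetilde A(\tau)=A_v^{*}(t-\tau)$ together with the duality identity $(U^{v}(t,s))^{*}=\widetilde U(t-s,0)$ from \cite[Proposition~2.9]{AFT91} (which the paper itself records right after the lemma), applied pathwise in $\omega$. The dualization of the smoothing estimates and the exponent bookkeeping ($\beta\in[0,1]$ from the moment inequality at the target time, $\theta,\gamma<\delta^{*}$ from the H\"older exponent of $\widetilde A$) are as in the cited source.
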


\begin{remark}
\label{rem:exponents}
Note that the estimates~\eqref{eq:UA1+b} and~\eqref{eq:A-gUA1+b} hold true only if the adjoint operator $(A_v(t,\omega))^{\ast}$ satisfies~\ref{ass:AT1ad}--\ref{ass:AT3ad}.
\end{remark}

\begin{remark}\label{rem:op_symm}
Lemma~\ref{lem:Semigroup estimates} and Lemma~\ref{lem:Adjoint estimates} remain valid if the roles of $A_{v}(t)$ and $(A_{v}(t))^{*}$ are interchanged. This is due to the fact that we identified $X$ with its dual $X^{*}$.
\end{remark}
{
For the sake of completeness, we point out the following statement on the adjoint of an evolution family $U^v$. Regarding {\bf Assumption \ref{ass:ATadjoint}}  we conclude by Theorem~\ref{thm:semigp} that for every $(t,\omega) \in (0,T]\times\Omega$ the nonautonomous linear operators $(A^\ast_v(t-\tau, \omega))_{\tau\in[0,t]}$ generate an evolution family $V^v(t;\tau,s)_{0\leq s \leq \tau \leq t}$. Due to~\cite[Proposition 2.9]{AFT91}  $$(U^v(t,s))^{\ast} = V(t;t-s,0) \quad \mbox{ for } s,t\in \Delta.$$
}

Based on Lemma~\ref{lem:Adjoint estimates}, we motivate a very useful identity resembling the fundamental theorem of calculus. This will be extensively used in Section~\ref{sec:main_thm}. For further details we refer to \cite{PrVe15}.

\begin{lemma}\label{lem:fun_cal}
Let \ref{ass:AT1}--\ref{ass:AT3} and \ref{ass:AT1ad}--\ref{ass:AT3ad} be satisfied by $A_{v}(t,\omega)$ and $A^\ast_{v}(t,\omega)$, respectively. For $\lambda>\delta^{*}$, $x^{*} \in \mathcal{D}_{v}^{\lambda*}$, the following identity holds true for every $\omega \in \Omega$ and $x\in X$:
\begin{equation}
       \label{identity}
       \int_{0}^{t}\left\langle U^{v}(t, s) A_{v}(s) x, x^{*}\right\rangle\ds=\left\langle U^{v}(t, 0) x, x^{*}\right\rangle - \left\langle x, x^{*}\right\rangle,
\end{equation}
{where $\langle \cdot , \cdot \rangle$ denotes the inner product in $X$.}
\end{lemma}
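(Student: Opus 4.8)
The plan is to establish \eqref{identity} by first proving it for sufficiently regular $x$ and then extending by density. Observe that for $x \in \mathcal{D}_{v}^{1+\gamma}$ with $\gamma \in (\delta^{*}-\text{something},\delta^{*})$ chosen appropriately, the map $s \mapsto U^{v}(t,s)x$ is differentiable on $[0,t)$ by Lemma~\ref{lem:Adjoint estimates}, with derivative $-U^{v}(t,s)A_{v}(s)x$ given by \eqref{eq:ds-semigroup}. Hence for such $x$ and any $0 < \eta < t$, the fundamental theorem of calculus in $X$ yields
\[
\int_{\eta}^{t}U^{v}(t,s)A_{v}(s)x\,\ds = U^{v}(t,\eta)x - \lim_{s\uparrow t}U^{v}(t,s)x = U^{v}(t,\eta)x - x,
\]
using (T1) and (T3). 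The estimate \eqref{eq:UA1+b} shows $\|U^{v}(t,s)A_{v}(s)x\|_{X} \leq C(t-s)^{-1-\theta}\|x\|_X$ for $x\in\mathcal{D}_v^{1+\theta}$, which is \emph{not} integrable near $s=t$; this is precisely why we must pair against $x^{*}\in\mathcal{D}_v^{\lambda*}$ rather than work directly in $X$. So instead I would pair with $x^{*}$ from the start: writing $\langle U^{v}(t,s)A_{v}(s)x, x^{*}\rangle = \langle (-A_v(t))^{-\lambda}U^v(t,s)(-A_v(s))^{1+\gamma}\,(-A_v(s))^{-\gamma}(-A_v(t))^{?}\cdots\rangle$ — more cleanly, move $(-A_v(t))^{\lambda}$-type factors onto $x^{*}$ via the adjoint and use \eqref{eq:A-gUA1+b} (with $\lambda > \delta^*$ and the duality $x^*\in\mathcal{D}_v^{\lambda*}$) to get the integrable bound $\|\langle U^v(t,s)A_v(s)x,x^*\rangle\| \leq C(t-s)^{-1-\gamma+\lambda}\|x\|_X\|(-A_v(t))^{\lambda}{}^*x^*\|_{X}$ with exponent $-1-\gamma+\lambda > -1$ for $\gamma$ close to $\delta^*$ and $\lambda$ close to $\delta^*$ from above; this makes the integral over $[0,t]$ absolutely convergent.

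With integrability in hand, I would take $\eta \downarrow 0$ in the identity above: the left side converges to $\int_0^t \langle U^v(t,s)A_v(s)x,x^*\rangle\,\ds$ by dominated convergence, and the right side converges to $\langle U^v(t,0)x,x^*\rangle - \langle x,x^*\rangle$ by strong continuity (T3) of $U^v$ at $s=0$ and since $U^v(t,0)$ is bounded. This proves \eqref{identity} for all $x$ in the dense subspace $\mathcal{D}_v^{1+\gamma} \subset X$.

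For the density step, fix $x \in X$ and $x^{*}\in\mathcal{D}_v^{\lambda*}$, and approximate $x$ by $x_n \in \mathcal{D}_v^{1+\gamma}$ with $x_n \to x$ in $X$ (possible since $A_v$ has dense domain, hence so do its fractional-power domains, in particular $\mathcal{D}_v^{1+\gamma}$ is dense in $X$). The right-hand side of \eqref{identity} is clearly continuous in $x \in X$ since $U^v(t,0)\in\mathcal{L}(X)$. For the left-hand side I would use the integrable majorant established above: $|\langle U^v(t,s)A_v(s)(x_n-x_m),x^*\rangle| \leq C(t-s)^{-1-\gamma+\lambda}\|x_n-x_m\|_X\,\|(-A_v(t))^{\lambda*}x^*\|_X$, so the integrals $\int_0^t\langle U^v(t,s)A_v(s)x_n,x^*\rangle\,\ds$ form a Cauchy sequence in $\C$ and converge; one then \emph{defines} $\int_0^t\langle U^v(t,s)A_v(s)x,x^*\rangle\,\ds$ as this limit (for general $x\in X$ the integrand only makes sense via this duality pairing interpretation), and passing to the limit in \eqref{identity} along $x_n$ gives the identity for all $x \in X$.

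I expect the main obstacle to be the bookkeeping of fractional powers and their adjoints needed to justify the integrable bound: one must carefully commute $(-A_v(s))^{1+\gamma}$ past $U^v(t,s)$ and transfer a $(-A_v(t))^{\lambda}$ (or rather its adjoint) onto $x^{*}$, invoking Remark~\ref{rem:op_symm} and the identification of $X$ with $X^{*}$, and check that the hypothesis $x^{*}\in\mathcal{D}_v^{\lambda*}$ with $\lambda > \delta^{*}$ is exactly what is needed so that \eqref{eq:A-gUA1+b} applies with an exponent strictly greater than $-1$. Once the correct pairing is set up, the limiting arguments are routine applications of dominated convergence and the strong continuity and boundedness properties from Theorem~\ref{thm:semigp} and Lemma~\ref{lem:Adjoint estimates}.
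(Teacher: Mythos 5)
Your proposal is correct and follows essentially the same route as the paper: first establish the identity for $x$ in the domain of $A_v$ by integrating \eqref{eq:ds-semigroup} in $s$, then extend to all $x\in X$ by density, using \eqref{eq:A-gUA1+b} paired against $x^{*}\in\mathcal{D}_{v}^{\lambda*}$ (with $\lambda>\delta^{*}>\gamma$) to obtain the integrable majorant $C(t-s)^{-1-\gamma+\lambda}\|x\|_X$. The only cosmetic differences are your truncation at $\eta$ near $s=0$ and the use of $\mathcal{D}_{v}^{1+\gamma}$ rather than $\mathcal{D}_{v}$ in the first step; neither is needed, since for $x$ in the (constant) domain the integrand is already bounded on $[0,t]$ and the genuine singularity at $s=t$ only appears after passing to general $x\in X$, exactly where you invoke the duality pairing.
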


\begin{proof}[Sketch of the proof]
{Since $A_v(t,\omega)$ satisfies \ref{ass:AT1}--\ref{ass:AT3}, by Theorem~\ref{thm:semigp} there exists an evolution family $U^v : \Delta \times \Omega \to \mathcal{L}(X)$. In particular, \eqref{eq:dt-semigroup} holds.} Assume $x\in \mathcal{D}_{v}$, then integrating \eqref{eq:dt-semigroup} with respect to time leads to
\begin{equation}
    U^{v}(t,t)x-U^{v}(t,0)x=-\int_{0}^{t}U^{v}(t,s)A_{v}(s)x\,\textnormal{d}s.
\end{equation}
Testing the above identity by $x^\ast \in X^\ast$, we obtain 
\begin{equation}
\label{eq:ftc1}
       \int_{0}^{t}\left\langle U^{v}(t, s) A_{v}(s) x, x^{*}\right\rangle \textnormal{d}s = \left\langle U^{v}(t, 0) x, x^{*}\right\rangle - \left\langle x, x^{*}\right\rangle.
\end{equation}
Next we verify that \eqref{eq:ftc1} holds for $x^{*} \in \mathcal{D}_{v}^{*}$ and $x\in X$. This is justified by \eqref{eq:A-gUA1+b}, which implies that for every $s<t$, and fixed $v$, the operator $A_{v}^{-\lambda}(t)U^{v}(t,s)A_{v}^{1+\gamma}(s)$, for $\lambda > \delta^\ast$ and $\gamma < \delta^\ast$, can be uniquely extended to a bounded linear operator on $X$. The claim follows regarding that
\begin{align*}
    &\int_{0}^{t}\left\langle U^{v}(t, s) A_{v}(s) x, x^{*}\right\rangle\ds\\
    &=\int_{0}^{t}\left\langle (-A_{v}(t))^{-\lambda} U^{v}(t, s)(-1) (-A_{v}(s))^{1+\gamma}(-A_{v}(s))^{-\gamma} x, ((-A_{v}(t))^{\lambda})^{*} x^{*}\right\rangle\ds\\
    &\leq \int_{0}^{t} \frac{\|(-A_{v}(s))^{-\gamma} x\|_{X}}{(t-s)^{1+\gamma-\lambda}}\|  ((-A_{v}(t))^{\lambda})^{*} x^{*}\|_{X^{*}}\ds,
\end{align*}
 where we used \eqref{eq:A-gUA1+b} in the last step.
\end{proof}


\section{The main result}
\label{sec:main_thm}
In this section we introduce two solution concepts: pathwise mild and weak, for the quasilinear SPDE
\begin{equation}\label{eq:3.Qspde}
    \begin{cases}
    \textnormal{d}u = A_u u \,\dt + \sigma(u)\,\dW_t\\
    u(0)=u^0.
    \end{cases}
\end{equation}
We are interested in showing the equivalence of these solution concepts. As already mentioned, Pronk and Veraar introduced in \cite{PrVe15} the concept of pathwise mild solution for semilinear SPDEs with random nonautonomous generators, which was later extended in \cite{KuNe18} to quasilinear problems. The precise definition of the pathwise mild solution is given below.

\begin{definition}[Local pathwise mild solution]\label{def:loc_sol} \label{def:sol_glo}
A local pathwise mild solution for \eqref{eq:3.Qspde} is a pair $(u, \tau)$ where $\tau$ is a strictly positive stopping time and $\{u(t) : t \in [0,\tau)\}$ is a $\mathcal{Z}$-valued $\left(\mathcal{F}_{t}\right)_{t\in [0, \tau)}$-adapted stochastic process which satisfies for every $t \in [0,\tau)$, $\mathbb{P}$-a.s.
\begin{equation}
u(t) =U^{u}(t, 0) u_{0}+U^{u}(t, 0) \int_{0}^{t} \sigma(u(s)) \mathrm{d}W_s-\int_{0}^{t} U^{u}(t, s) A_{u}(s) \int_{s}^{t} \sigma(u(\tau)) \mathrm{d} W_\tau\, \mathrm{d} s,\label{mild}
\end{equation}
where $U^{u}$ is the evolution family generated by the operator $A_{u}$.
\end{definition}

\begin{definition}[Maximal solution] 
We call $\{u(t): t \in[0, \tau)\}$ a maximal local pathwise mild solution of \eqref{eq:3.Qspde} if for any other local pathwise mild solution $\{\widetilde{u}(t)\colon t \in[0, \widetilde{\tau})\}$ satisfying {$\tilde{\tau} \geq \tau$} a.s. and $\left.\tilde{u}\right|_{[0, \tau)}$ is equivalent to $u,$ one has $\tilde{\tau}=\tau$ a.s. If $\{u(t)\colon t \in[0, \tau)\}$ is a maximal local pathwise mild solution of \eqref{eq:3.Qspde}, then the stopping time $\tau$ is called its lifetime.
\end{definition}

In \cite{KuNe18}, the existence of a maximal local pathwise mild solution was established under additional regularity assumptions on the nonlinear term  $\sigma$. These {were} necessary for the fixed point argument. For the convenience of the reader, we recall these assumptions.

\begin{assumption}[Existence of pathwise mild solution] \label{ass:ex_path_mild}\mbox{}
\begin{itemize}
    \item[\textnormal{(1)}]  {\bf Assumption \ref{ass:AT}} holds.
    
    \item[\textnormal{(2)}] The mapping $\sigma: \Omega \times[0, T] \times X \rightarrow \mathcal{L}_{2}\left(H, \mathcal{D}^{2\beta}_u\right)$ is locally Lipschitz continuous, meaning that there exist constants $L_\sigma = L_\sigma(u,v) > 0$, $l_\sigma = l_\sigma(u) > 0$ such that
    \begin{align*}
    &\|\sigma(u)-\sigma(v)\|_{\mathcal{L}_{2}\left(H, \mathcal{D}^{2\beta}_u\right)} \leq L_{\sigma}\|u-v\|_{X},\qquad u,v\in Z,\\
    &\|\sigma(u)\|_{\mathcal{L}_{2}\left(H, \mathcal{D}^{2\beta}_u\right)} \leq l_{\sigma}\left(1+\|u\|_{X}\right),\qquad \qquad \;\, u\in Z.
    \end{align*}
\end{itemize}
\end{assumption}

 Under the above assumptions, the following  existence result for~\eqref{eq:3.Qspde} holds true~\cite[Theorem 3.11]{KuNe18}.

\begin{theorem}\label{thm:ex-path-mild}
Let {\bf Assumption~\ref{ass:ex_path_mild}} hold true. Then, there exists a unique maximal local pathwise mild solution $u$ for~\eqref{eq:3.Qspde} such that $u \in L^{0}\left(\Omega ; \mathcal{B}\left(\left[0, \tau_{\infty}\right) ; \mathcal{D}^{\beta}_u\right)\right) \cap$ $L^{0}\left(\Omega ; \mathcal{C}^{\delta}\left(\left[0, \tau_{\infty}\right) ; \mathcal{D}^{\alpha}_u\right)\right)$, where
 $\alpha\in[0,1)$, $\beta\in(1/2,1)$, $\beta>\alpha$ and $\delta\in(0,\beta-\alpha)$.
\end{theorem}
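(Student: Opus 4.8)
The plan is to realize the solution of \eqref{mild} as the unique fixed point of the map
\[
(\mathcal{T}u)(t) := U^u(t,0)u^0 + U^u(t,0)\int_0^t\sigma(u(s))\,\dW_s - \int_0^t U^u(t,s)A_u(s)\int_s^t\sigma(u(r))\,\dW_r\,\ds ,
\]
first on a small random time interval and then, by concatenation, up to a maximal lifetime $\tau_\infty$. Fix admissible exponents $\alpha\in[0,1)$, $\beta\in(1/2,1)$ with $\beta>\alpha$, $\delta\in(0,\beta-\alpha)$. For $R>0$ and a deterministic $T_0\in(0,T]$, let $\mathcal{E}_{T_0,R}$ be the closed ball of radius $R$ (centred, say, at $t\mapsto U^{u^0}(t,0)u^0$) in the space of $(\mathcal{F}_t)$-adapted processes belonging to $L^0(\Omega;\mathcal{B}([0,T_0];\mathcal{D}^\beta_u))\cap L^0(\Omega;\mathcal{C}^\delta([0,T_0];\mathcal{D}^\alpha_u))$ with $u(0)=u^0$; this is a complete metric space for the natural topology. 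To handle the local Lipschitz constants of Assumption~\ref{ass:ex_path_mild}, I would first truncate $\sigma$ and the coefficients of $A_u$ outside a ball of radius $R$ in the relevant norms, prove the theorem for the resulting globally Lipschitz problem, and recover the original statement on $[0,\tau_\infty)$ with $\tau_\infty$ the first exit time from the ball, exactly as in \cite{PrVe15,KuNe18}.

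Next I would verify that $\mathcal{T}$ is well defined on $\mathcal{E}_{T_0,R}$ and maps it into itself. Adaptedness of $\mathcal{T}u$ is not obvious, since the integrands $U^u(t,s)(\cdots)$ are not adapted; however, via the integration-by-parts identity underlying \eqref{mild} this expression coincides with a genuinely adapted process, see \cite{PrVe15,KuNe18}. The first stochastic integral and the inner one $\int_s^t\sigma(u(r))\,\dW_r$ are well defined in $\mathcal{D}^{2\beta}_u$ by Proposition~\ref{prop:reg_stoc_int_1} applied with $X$ replaced by $\mathcal{D}^{2\beta}_u$, since $\sigma(u)\in\mathcal{L}_2(H,\mathcal{D}^{2\beta}_u)$ by Assumption~\ref{ass:ex_path_mild}; moreover Proposition~\ref{prop:reg_stoc_int_2} yields $\big\|\int_s^t\sigma(u(r))\,\dW_r\big\|_{\mathcal{D}^{2\beta}_u}\leq C_{T_0}\,(t-s)^{\kappa}$ for some $\kappa\in(0,1/2)$, with $C_{T_0}\to0$ as $T_0\searrow0$. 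The term $U^u(t,0)u^0$ is controlled, in both the $\mathcal{B}([0,T_0];\mathcal{D}^\beta_u)$ and $\mathcal{C}^\delta([0,T_0];\mathcal{D}^\alpha_u)$ norms, by the smoothing and continuity estimates \eqref{eq:UAg}, \eqref{eq:AgUA-g} of Lemma~\ref{lem:Semigroup estimates} (for $u^0\in\mathcal{D}^\beta_u$), and the stochastic convolution $U^u(t,0)\int_0^t\sigma(u)\,\dW$ by combining the above with \eqref{eq:UAb}. The crucial term is the last one: inserting $(-A_u(s))^{1+\gamma}(-A_u(s))^{-(1+\gamma)}$ and using \eqref{eq:UA1+b} (respectively \eqref{eq:A-gUA1+b}, with the extra weight $(-A_u(t))^{-\lambda}$, for the $\mathcal{D}^\alpha_u$-component) gives
\[
\Big\|U^u(t,s)A_u(s)\int_s^t\sigma(u(r))\,\dW_r\Big\|_{\mathcal{D}^\alpha_u}\leq C\,(t-s)^{-(1+\gamma-\lambda)}\Big\|\int_s^t\sigma(u(r))\,\dW_r\Big\|_{\mathcal{D}^{1+\gamma}_u}\leq C\,C_{T_0}\,(t-s)^{\kappa-1-\gamma+\lambda},
\]
so that $\int_0^t(\cdots)\,\ds$ converges, and is of order $C_{T_0}\,T_0^{\kappa+\lambda-\gamma}$, as soon as the admissible exponents $0<\gamma<\delta^*$, $\lambda\in(0,1)$, $\kappa\in(0,1/2)$ are chosen with $\kappa+\lambda>\gamma$ (which in particular requires $\mathcal{D}^{2\beta}_u\hookrightarrow\mathcal{D}^{1+\gamma}_u$, i.e. $\gamma<2\beta-1$, possible precisely because $\beta>1/2$). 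Hence $\mathcal{T}(\mathcal{E}_{T_0,R})\subset\mathcal{E}_{T_0,R}$ for suitable $R$ and $T_0$.

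Finally I would run the contraction estimate for $\mathcal{T}u-\mathcal{T}v$. Besides the contributions already treated, now carrying $\sigma(u)-\sigma(v)$ (bounded by $L_\sigma\|u-v\|_X$), one also meets the differences $U^u(t,s)-U^v(t,s)$ and $A_u(s)-A_v(s)$. By a perturbation argument applied to the Acquistapace–Terreni construction, Assumptions~\ref{ass:AT4} and \ref{ass:AT4ad} propagate to Lipschitz estimates for the evolution families and their fractional-power-weighted versions, with constant a multiple of $\sup_{s\leq t}\|u(s)-v(s)\|_Y$, which is dominated by the $\mathcal{C}^\delta([0,T_0];\mathcal{D}^\alpha_u)\hookrightarrow C([0,T_0];Y)$ norm of $u-v$. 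Repeating the three-term estimate then gives $\|\mathcal{T}u-\mathcal{T}v\|\leq\Theta(T_0)\|u-v\|$ with $\Theta(T_0)\to0$ as $T_0\searrow0$; choosing $T_0$ small produces a unique fixed point on $[0,T_0]$, and removing the truncation yields a unique local pathwise mild solution on $[0,\tau)$ for a strictly positive stopping time $\tau$. Uniqueness on overlapping intervals lets one glue local solutions, and setting $\tau_\infty$ equal to the supremum of the lifetimes, together with the blow-up alternative (a solution can fail to continue only if its $\mathcal{D}^\beta_u$-norm explodes or the localizing radius is left), delivers the maximal local pathwise mild solution with the asserted regularity. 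The main obstacle is the last term $U^u(t,s)A_u(s)\int_s^t\sigma\,\dW$: it is only meaningful because the $(t-s)^{-(1+\gamma)}$ singularity of $U^uA_u$ is compensated by the vanishing of the inner stochastic integral as $s\uparrow t$, and its Lipschitz dependence on $u$ forces one to differentiate the $u$-dependence hidden simultaneously in $U^u$, in $A_u$ and in the domain $\mathcal{D}^{2\beta}_u$ carrying $\sigma$ — which is precisely why the constant-domain Assumption~\ref{ass:const_dom} and the careful balance of exponents are imposed.
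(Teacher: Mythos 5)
The paper does not prove Theorem~\ref{thm:ex-path-mild} itself but imports it from \cite[Theorem 3.11]{KuNe18}, and your sketch reproduces exactly the strategy of that reference: a Banach fixed-point argument for a truncated (globally Lipschitz) problem in $L^{0}\left(\Omega;\mathcal{B}([0,T_0];\mathcal{D}^{\beta}_u)\right)\cap L^{0}\left(\Omega;\mathcal{C}^{\delta}([0,T_0];\mathcal{D}^{\alpha}_u)\right)$, with the singular term $\int_0^t U^u(t,s)A_u(s)\int_s^t\sigma\,\mathrm{d}W\,\mathrm{d}s$ controlled by trading the H\"older gain of the inner stochastic integral against the $(t-s)^{-(1+\gamma)}$ smoothing, the contraction supplied by the Lipschitz dependence of $U^u$ on $u$ coming from \ref{ass:AT4}, and a gluing plus blow-up argument for maximality. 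Your exponent bookkeeping (in particular $\beta>1/2$ so that $\mathcal{D}^{2\beta}_u\hookrightarrow\mathcal{D}^{1+\gamma}_u$) matches the role these parameters play there, so the proposal is consistent with the proof the paper relies on.
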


We now give the definition of a weak solution of \eqref{eq:3.Qspde}.

\begin{definition} \label{def:sol_weak}
We call an $(\mathcal{F}_t)_{t\geq 0}$-adapted $\mathcal{Z}$-valued process $u$ a weak solution of \eqref{eq:3.Qspde}, if the following identity holds $\mathbb{P}$-a.s.
\begin{align}
\label{weak}
\langle u(t), \varphi(t)\rangle & =\left\langle u^0,\varphi(0)\right\rangle - \int_{0}^{t}a\left(u(s); u(s), \varphi(s)\right) \ds+\int_{0}^{t}\left\langle u(s), \varphi^{\prime}(s)\right\rangle \ds \\ 
&\quad  -\int_{0}^{t}\left\langle \int_{0}^{s}\sigma(u(\tau))\dW_{\tau}, \varphi^{\prime}(s)\right\rangle~\ds+\left\langle \int_{0}^{t}\sigma(u(s))\dW_{s}, \varphi(t)\right\rangle,\nonumber 
\end{align}
for every $\varphi\in L^{0}\left(\Omega ; C^{1}\left([0, T]; \mathcal{D}_{u}^{*})\right)\right)$, where $a(u;\cdot,\cdot)\colon V\times V \to \R$ is the bilinear form corresponding to the operator $A_{u}$, as introduced in \eqref{eq:sesquilinear}. 
\end{definition}

\begin{remark}
 Note that the test functions $\varphi$ are allowed to depend on time and on $\omega$. We do not make any assumption regarding the adaptedness of these test functions, but only on their spatial and temporal regularity.
\end{remark}

\begin{remark}
\label{rem:ftc}
In comparison to the standard weak formulation of \eqref{eq:3.Qspde}, the weak formulation \eqref{weak} contains two additional terms 
\begin{align*}
    L_1 & := \int_{0}^{t}\left\langle u(s), \varphi^{\prime}(s)\right\rangle \ds, \\
    L_2 & := - \int_{0}^{t}\left\langle \int_{0}^{s}\sigma(u(\tau))\dW_{\tau}, \varphi^{\prime}(s)\right\rangle~\ds.
\end{align*}
The term $L_1$ appears quite naturally when we test \eqref{eq:3.Qspde} with a time-dependent function $\varphi$, apply the product rule, i.e.
\[\left\langle \frac{d}{ds}u(s),\varphi(s)\right\rangle = \frac{d}{ds} \left\langle u(s), \varphi(s)\right\rangle - \left\langle u(s), \varphi^\prime(s)\right\rangle\]
and integrate over time. The term $L_2$ can be again justified by the product rule, more precisely
$$
\left\langle \sigma(u(s))\dW_{s},\varphi(s)\right\rangle=\frac{d}{ds}\left\langle \int_{0}^{s}\sigma(u(r))\dW_{r},\varphi(s)\right\rangle-\left\langle \int_{0}^{s}\sigma(u(r))\dW_{r},\varphi^{'}(s)\right\rangle.$$
Integrating this relation with respect to the time variable, and interchanging the stochastic integral and the inner product gives the representation in Definition~\ref{def:sol_weak}.
\end{remark}

By Definition \ref{def:sol_weak}, the weak formulation of equation \eqref{eq:system} is given by
\begin{align}
\label{weak example}
\langle u(t), \varphi(t)\rangle &= \left\langle u^0,\varphi(0)\right\rangle - \int_{0}^{t}\left\langle \nabla u(s), B^{T}(u)\nabla\varphi(s) \right\rangle \ds+\int_{0}^{t}\left\langle u(s), \varphi^{\prime}(s)\right\rangle \ds \\ 
&\qquad-\int_{0}^{t}\left\langle \int_{0}^{s}\sigma(u(\tau))\dW_{\tau}, \varphi^{\prime}(s)\right\rangle~\ds + \left\langle \int_{0}^{t}\sigma(u(s))~\dW_{s}, \varphi(t)\right\rangle. \nonumber
\end{align}

\noindent \textbf{The key idea.} We describe the intuition behind the approach we use in order to prove the equivalence of pathwise mild and weak solutions for~\eqref{eq:3.Qspde}. Such an idea is standard in the context of quasilinear problems. Similar to the proof of existence of solutions~\cite{Ya10,KuNe18}, we firstly work with nonautonomous equations with random coefficients. More precisely, instead of treating~\eqref{eq:3.Qspde} we consider the linear equation
\begin{align}
    \begin{cases}\label{linear:Q}
    \textnormal{d}u = A_v u \,\dt + \sigma(u)\,\dW_t\\
    u(0)=u^0,
    \end{cases}
\end{align}
where $A_v u =\diver(B(v)\nabla u )$ as specified in~\eqref{eq:operator}. This is a linear nonautonomous Cauchy problem, which due to~\cite{PrVe15}, has a unique pathwise mild solution 
\begin{align}\label{mild:nonautonomous}
 u(t)=U^v(t,0)u^{0} - \int_{0}^{t} U^v(t, s) A_v(s) \int_{s}^{t}\sigma(u(\tau))\dW_{\tau}\, \ds + U^v(t, 0) \int_{0}^{t}\sigma(u(s))\dW_{s}.
\end{align}
 Its weak formulation reads as
\begin{align}
\langle u(t), \varphi(t)\rangle &= \left\langle u^0,\varphi(0)\right\rangle + \int_{0}^{t}\left\langle A_{v}u, \varphi(s) \right\rangle \ds+\int_{0}^{t}\left\langle u(s), \varphi^{\prime}(s)\right\rangle \ds\label{weakl} \\ 
&\quad-\int_{0}^{t}\left\langle \int_{0}^{s}\sigma(v(\tau))\dW_{\tau}, \varphi^{\prime}(s)\right\rangle~\ds + \left\langle \int_{0}^{t}\sigma(v(s))~\dW_{s}, \varphi(t)\right\rangle, \nonumber
\end{align}
for every test function $\varphi\in L^{0}\left(\Omega ; C^{1}\left([0, T]; \mathcal{D}_{u}^{*})\right)\right)$.
According to~\cite[Section 4.4]{PrVe15}, the weak solution~\eqref{weakl} is equivalent with the pathwise mild solution~\eqref{mild:nonautonomous}.
Returning to the quasilinear problem~\eqref{eq:3.Qspde}, 
one can show by means of fixed point arguments as in~\cite{KuNe18}, that under {\bf Assumption~\ref{ass:ex_path_mild} }
\begin{align*}
 u(t)=U^u(t,0)u^{0} - \int_{0}^{t} U^u(t, s) A_u(s) \int_{s}^{t}\sigma(u(\tau))\dW_{\tau}\, \ds + U^u(t, 0) \int_{0}^{t}\sigma(u(s))\dW_{s},
\end{align*}
is the pathwise mild solution of~\eqref{eq:3.Qspde}. {In conclusion, we approach the quasilinear SPDE \eqref{eq:3.Qspde} as a semilinear SPDE where the nonlinear map $A_u$ is viewed as a linear nonautonomous map for a fixed $u$, which is the pathwise mild solution of the quasilinear SPDE \eqref{eq:3.Qspde}. In this case, the tools  developed in~\cite[Section 4.4]{PrVe15} can be employed in order to establish the equivalence of weak~\eqref{weak} and pathwise mild~\eqref{mild} solutions in the quasilinear case.}\\

In order to prove our main result, we make the following assumptions.

\begin{assumption}
\mbox{}\label{assumptions}
\begin{enumerate}
    \item[\textnormal{(1)}] 
    For simplicity, we assume that the initial condition $u^{0}=0$.
    \item[\textnormal{(2)}] 
    {\bf Assumption~\ref{ass:AT},~\ref{ass:ATadjoint}} and {\bf~\ref{ass:const_dom}} are satisfied.
    \item[\textnormal{(3)}] 
The diffusion coefficient $\sigma : \Omega \times [0,T] \times X \to \mathcal{L}_{2}\left(H,X\right)$ is a  strongly measurable adapted process. Furthermore $\sigma(u)\in L^{0}(\Omega,L^{p}(0,T;\mathcal{L}_{2}(H,X)))$ for  $p\in(2,\infty)$.
\end{enumerate} 
\end{assumption}

Similar to~\cite{PrVe15} we introduce an appropriate space that incorporates the time and space regularity of the test functions.
\begin{definition} 
\label{def:gamma}
For $t \in[0, T]$ and $\beta\geq 0$, we let $\Gamma_{t,\beta}^{u}$ be the subspace of all test functions $\varphi \in L^{0}\left(\Omega ; C^{1}\left([0, t] ; X^{*}\right)\right)$, such that
\begin{enumerate}
    \item[\textnormal{(1)}] for all $s \in [0, t)$ and $\omega \in \Omega$, we have $\varphi(s) \in \mathcal{D}_{u}^{(\beta+1)*}$ and $\varphi^{\prime}(s) \in \mathcal{D}_{u}^{\beta *}$.
    
    \item[\textnormal{(2)}] the process $s \mapsto A_{u}^{*}(s) \varphi(s)$ belongs to $L^{0}\left(\Omega ; C\left([0, t] ; X^{*}\right)\right)$.
    
    \item[\textnormal{(3)}] There is a mapping $C : \Omega \rightarrow \mathbb{R}_{+}$ and  $\varepsilon > 0$ such that for all $s \in[0, t)$ 
    \begin{equation*}
    \qquad\left\|\left((-A_{u}(s))^{1+\beta}\right)^{*} \varphi(s)\right\|_{X^{*}}+\left\|\left((-A_{u}(s))^{\beta}\right)^{*} \varphi^{\prime}(s)\right\|_{X^{*}} \leq C(t-s)^{-1+\varepsilon}\label{inequality test function}.
\end{equation*}
\end{enumerate}
\end{definition}

In the following, we use test functions of the {form} $\varphi(s) = U(t,s)^{\ast} x^{\ast}$ for $x^\ast \in \mathcal{D}_{u}^{*}$. Thus, we need to show that such a $\varphi  \in \Gamma_{t, \beta}^{u}$ for some $\beta > 0$. This is established in the next {lemma}. We recall that the parameters $\delta$ respectively $\delta^\ast$ stand for the H\"older exponents in the (AT) conditions for $A_u$ and $A_u^{\ast}$ as specified in~\ref{ass:AT3} and~\ref{ass:AT3ad}{, respectively}.
\begin{lemma}
\label{lem.test}
Let $x^{*}\in \mathcal{D}_{u}^{*}$ and $t\in[0,T]$. For $\beta \in [0,\delta^{\ast})$, the process $\varphi:[0,t]\times\Omega\to X^{*}$ defined as $\varphi(s) := U^{u}(t,s)^{\ast}x^{\ast} $ belongs to $ \Gamma^{u}_{t,\beta}$.
\end{lemma}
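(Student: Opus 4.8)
The goal is to verify that $\varphi(s) := U^{u}(t,s)^{\ast}x^{\ast}$ satisfies the three conditions in Definition~\ref{def:gamma} for the index $\beta \in [0,\delta^{\ast})$. By Remark~\ref{rem:op_symm}, the estimates of Lemma~\ref{lem:Semigroup estimates} and Lemma~\ref{lem:Adjoint estimates} hold with the roles of $A_u$ and $A_u^{\ast}$ interchanged, and the adjoint of the evolution family $U^{u}(t,s)^{\ast}$ is precisely the evolution family $V^{u}(t;t-s,0)$ generated by $(A_u^{\ast}(t-\tau))_{\tau}$; this is the identity recorded just before Lemma~\ref{lem:fun_cal}. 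So everything reduces to applying the regularity lemmas to this adjoint evolution family.

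\textbf{Step 1: smoothness in $s$ and the derivative formula.} First I would note that since $x^{\ast}\in\mathcal{D}_u^{\ast}$, Lemma~\ref{lem:Adjoint estimates} (applied with roles interchanged, via Remark~\ref{rem:op_symm}) gives that $s\mapsto U^{u}(t,s)^{\ast}x^{\ast}$ belongs to $C^{1}([0,t);X^{\ast})$ with $\partial_s \varphi(s) = \frac{\partial}{\partial s}(U^u(t,s)^\ast x^\ast) = -\big(U^{u}(t,s)A_{u}(s)\big)^{\ast}x^{\ast} = -A_{u}^{\ast}(s) U^{u}(t,s)^{\ast}x^{\ast}$, using $x^\ast \in \mathcal{D}_u^\ast$ so that $A_u(s)^{-1}$-type manipulations are licit. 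In particular $\varphi'(s) = -A_u^\ast(s)\varphi(s)$, which immediately makes condition~(2) of Definition~\ref{def:gamma} hold, since $A_u^\ast(s)\varphi(s) = -\varphi'(s)$ is continuous on $[0,t)$; continuity up to $t$ follows because $\varphi'$ is continuous on the closed interval $[0,t]$ by $C^1$-regularity. (Strictly, one should check $C^1$ up to $t$ as well, which holds by the same estimates since $x^\ast\in\mathcal{D}_u^\ast$ makes the derivative bounded near $s=t$.)

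\textbf{Step 2: the fractional-power mapping properties, condition~(1).} For condition~(1) I must show $\varphi(s)\in\mathcal{D}_u^{(\beta+1)\ast}$ and $\varphi'(s)\in\mathcal{D}_u^{\beta\ast}$ for $s\in[0,t)$. Both follow from the smoothing property of the evolution family: since $x^\ast\in\mathcal{D}_u^\ast=\mathcal{D}_u^{1\ast}$ and the adjoint evolution family maps into $\mathcal{D}_u^{(1+\theta)\ast}$ for $\theta<\delta^\ast$ (this is the content of estimate~\eqref{eq:UA1+b} read for the adjoint family, valid precisely because $A_u^\ast$ satisfies \ref{ass:AT1ad}--\ref{ass:AT3ad}), we get $\varphi(s)=U^u(t,s)^\ast x^\ast\in\mathcal{D}_u^{(1+\beta)\ast}$ for $\beta<\delta^\ast$. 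Then $\varphi'(s)=-A_u^\ast(s)\varphi(s)\in\mathcal{D}_u^{\beta\ast}$ since applying $A_u^\ast(s)$ drops the domain index by one.

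\textbf{Step 3: the singular estimate, condition~(3).} This is the heart of the lemma. I need a mapping $C:\Omega\to\R_+$ and $\varepsilon>0$ with
\[
\big\|\big((-A_u(s))^{1+\beta}\big)^\ast\varphi(s)\big\|_{X^\ast} + \big\|\big((-A_u(s))^{\beta}\big)^\ast\varphi'(s)\big\|_{X^\ast}\leq C(t-s)^{-1+\varepsilon}.
\]
For the first term, write $\big((-A_u(s))^{1+\beta}\big)^\ast\varphi(s) = (-A_u^\ast(s))^{1+\beta}U^u(t,s)^\ast x^\ast$ and apply estimate~\eqref{eq:UA1+b} in its adjoint form with $\theta=\beta$, namely $\|U^u(t,s)^\ast(-A_u^\ast(s))^{1+\beta} y^\ast\|\lesssim (t-s)^{-(1+\beta)}\|y^\ast\|$; but here the ordering of the factors is reversed, so I actually want the dual of~\eqref{eq:UA1+b}, i.e. a bound on $(-A_u^\ast(s))^{1+\beta}U^u(t,s)^\ast$. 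To get the right power one interpolates: write $(-A_u^\ast(s))^{1+\beta}U^u(t,s)^\ast x^\ast = (-A_u^\ast(s))^{1+\beta}U^u(t,s)^\ast(-A_u^\ast(s))^{-1}(-A_u^\ast(s))x^\ast$ and combine~\eqref{eq:A-gUA1+b} (adjoint form, with $\lambda$ close to $1$ and $\gamma$ chosen so $1+\gamma-\lambda$ is slightly above $\beta$) — actually the cleanest route is to factor $(-A_u^\ast(s))^{1+\beta}U^u(t,s)^\ast x^\ast = (-A_u^\ast(s))^\beta\big[(-A_u^\ast(s))U^u(t,s)^\ast\big]x^\ast$ and use~(T5) of Theorem~\ref{thm:semigp} for the bracket, giving a $(t-s)^{-1}$, then move the remaining $(-A_u^\ast(s))^\beta$ through using a Hölder-continuity-of-domains argument as in~\cite[Lemma~2.6--2.7]{PrVe15}; the net exponent is $-(1+\beta)+$ (slack from $\delta^\ast-\beta>0$), yielding $-1+\varepsilon$ with $\varepsilon\le\delta^\ast-\beta$. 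For the second term, $\varphi'(s)=-A_u^\ast(s)\varphi(s)$ so $\big((-A_u(s))^\beta\big)^\ast\varphi'(s)= -(-A_u^\ast(s))^{1+\beta}U^u(t,s)^\ast x^\ast$, exactly the same quantity as the first term, so it obeys the same bound.

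\textbf{Main obstacle.} The delicate point is Step~3: getting the singular exponent to be strictly better than $-1$ (i.e. producing the $\varepsilon>0$) rather than just $-1-\beta$. This requires exploiting the gap $\delta^\ast-\beta>0$ by "spending" $\beta$ worth of fractional power against the Hölder regularity of $s\mapsto(-A_u(s))^{-1}$ built into~\ref{ass:AT3ad}, rather than against the smoothing of the evolution family — precisely the mechanism behind estimate~\eqref{eq:A-gUA1+b}. One must also be careful that all constants $C$ depend measurably on $\omega$ (through $L^\ast(\omega)$, $M^\ast$), which is why the statement allows $C:\Omega\to\R_+$; this is routine given Proposition~\ref{prop:meas} and the pathwise nature of the construction. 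Everything else is a direct transcription of \cite[Lemma~4.7 and surrounding estimates]{PrVe15} to the present (quasilinear, but with $u$ frozen) setting.
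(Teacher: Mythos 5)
Your proposal follows the same route as the paper: verify the three conditions of Definition~\ref{def:gamma} for $\varphi(s)=U^u(t,s)^\ast x^\ast$ using Remark~\ref{rem:op_symm}, the derivative formula $\varphi'(s)=-A_u^\ast(s)U^u(t,s)^\ast x^\ast$ from \cite[Prop.~2.9]{AFT91}, and the adjoint smoothing estimate \eqref{eq:A-gUA1+b}, ending with $\varepsilon=\lambda-\beta$ for $\lambda\in(\beta,\delta^\ast)$; the observation that the two terms in condition~(3) coincide is also exactly what the paper uses. The one place where your execution wobbles is Step~3: the factorization $(-A_u^\ast(s))^{\beta}\bigl[(-A_u^\ast(s))U^u(t,s)^\ast\bigr]x^\ast$ with (T5) applied to the bracket does not deliver the claimed exponent — once you have spent $(t-s)^{-1}$ on the bracket, the leftover $(-A_u^\ast(s))^{\beta}$ costs another $(t-s)^{-\beta}$ and you land at $-(1+\beta)$, not $-1+\varepsilon$. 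The paper instead obtains the bound in a single application of \eqref{eq:A-gUA1+b}: since $x^\ast\in\mathcal{D}_u^\ast\subset\mathcal{D}_u^{\lambda\ast}$, write $x^\ast=((-A_u(t))^{-\lambda})^\ast((-A_u(t))^{\lambda})^\ast x^\ast$ and bound the operator $((-A_u(s))^{1+\beta})^\ast U^u(t,s)^\ast((-A_u(t))^{-\lambda})^\ast$ in $\mathcal{L}(X^\ast)$ by $C(t-s)^{-(1+\beta-\lambda)}$; this is the mechanism you correctly name in your closing paragraph, so the fix is only to discard the (T5) detour and commit to that computation.
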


\begin{proof}
In order to show that $\varphi(s) \in \Gamma_{t,\beta}^u$ we verify the three conditions of Definition~\ref{def:gamma}. For $(1)$, using the definition of the norm on $\mathcal{D}_{u}^{(\beta+1)*}$, we have by \eqref{eq:A-gUA1+b} and Remark \ref{rem:op_symm}
\begin{align*}
&\|\varphi(s)\|_{\mathcal{D}_{u}^{(\beta+1)*}}  = \|\left((-A_{u}(s))^{1+\beta}\right)^{*}(U^{u}(t,s))^{*}x^{*}\|_{X^{*}}\\
&\qquad = \|\left((-A_{u}(s))^{\beta+1}\right)^{*}(U^{u}(t,s))^{*}((-A_{u}(t))^{-\lambda})^{*}((-A_{u}(t))^{\lambda})^{*}x^{*}\|_{X^{*}}\\
    &\qquad \leq \|\left((-A_{u}(s))^{\beta+1}\right)^{*}(U^{u}(t,s))^{*}((-A_{u}(t))^{-\lambda})^{*}\|_{\mathcal{L}(X^{*})}\|((-A_{u}(t))^{\lambda})^{*}x^{*}\|_{X^{*}}\\
    &\qquad \leq \dfrac{C}{(t-s)^{1+\beta-\lambda}}\|((-A_{u}(t))^{\lambda})^{*}x^{*}\|_{X^{*}} < \infty,    
\end{align*}
where $\lambda\in (\beta,\delta^\ast)$.\\ 
Next we verify condition (2). Using \cite[Prop.~2.9]{AFT91}, we obtain
\[
\varphi^\prime(s) = \frac{d}{ds}\varphi(s) = -(A_{u}(s))^{*}(U^{u}(t,s))^{*}x^{*}.
\]
By Lemma \ref{lem:Semigroup estimates}, we immediately see that $\varphi$ is continuously differentiable for $s<t$. For the case $s = t$, we refer to \cite[Theorem~6.5]{AT92}. Now from the above identity, we have
\begin{align*}
    \|\varphi^\prime(s)\|_{\mathcal{D}_{u}^{\beta*}} & = \|\left((-A_{u}(s))^{\beta}\right)^{*}\varphi^\prime(s)\|_{X^{*}}\\
    & = \|\left((-A_{u}(s))^{1+\beta}\right)^{*}(U^{u}(t,s))^{*}x^{*}\|_{X^{*}} = \|\varphi(s)\|_{\mathcal{D}_{u}^{(\beta+1)*}} < \infty,
\end{align*}
using (1).
Condition $(3)$ of Definition~\ref{def:gamma} immediately follows from the previous two  estimates, choosing  $\varepsilon:=\lambda-\beta$.
\end{proof}

Due to {\bf Assumption~\ref{assumptions}} and regarding the regularity of the stochastic integral (recall Proposition ~\ref{prop:reg_stoc_int_1} an \ref{prop:reg_stoc_int_2}), the terms appearing in \eqref{weak} are well-defined for $\varphi(s)=U(t,s)^\ast x^\ast\in \Gamma^{u}_{t,\beta}$, where $x\in \mathcal{D}^{\ast}_u$.

\begin{lemma}
\label{lem:well-defined}
Let {\bf Assumption~\ref{assumptions}} hold and let $\beta\in[0,\delta^\ast)$. Then the following mappings   
\begin{align*}
    &u\mapsto \int_0^t a(u; u, \varphi(s))\,\ds,\\
    &u\mapsto \int_{0}^{t}\left\langle\int_{0}^{s}\sigma(u(\tau))\dW_{\tau},\varphi'(s)\right\rangle~\textnormal{d}s,\\
    &u\mapsto \int_{0}^{t}\left\langle U^u(t, s) A_{u}(s) \int_{s}^{t}\sigma(u(\tau))\dW_{\tau}, x^{*}\right\rangle\,\textnormal{d}s
\end{align*}
are well-defined from $\mathcal{Z}$ to $ L^{0}(\Omega;\R)$.
\end{lemma}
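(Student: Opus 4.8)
The plan is to reduce each of the three expressions to a pairing against $\varphi'(s)$, where $\varphi(s)=U^{u}(t,s)^{\ast}x^{\ast}$, and then to combine the integrable bound on $\|\varphi'(s)\|_{X^{\ast}}$ supplied by $\varphi\in\Gamma^{u}_{t,\beta}$ with the a.s.\ boundedness on $[0,t]$ of the $X$-valued processes that appear. As preparation I would record the following. By Lemma~\ref{lem.test}, for $x^{\ast}\in\mathcal{D}^{\ast}_{u}$ the map $s\mapsto\varphi(s):=U^{u}(t,s)^{\ast}x^{\ast}$ lies in $\Gamma^{u}_{t,0}$; hence $\varphi(s)\in\mathcal{D}^{\ast}_{u}$ for $s<t$, one has $\varphi'(s)=-A_{u}^{\ast}(s)\varphi(s)$ as in the proof of Lemma~\ref{lem.test}, and there are $\varepsilon>0$ and $C\colon\Omega\to\R_{+}$ with $\|\varphi'(s)\|_{X^{\ast}}=\|A_{u}^{\ast}(s)\varphi(s)\|_{X^{\ast}}\le C(\omega)(t-s)^{-1+\varepsilon}$ for $s\in[0,t)$, which is integrable in $s$. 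Next, a process $u\in\mathcal{Z}$ has an a.s.\ continuous $X$-valued version, so $K(\omega):=\sup_{s\in[0,t]}\|u(s)\|_{X}<\infty$ a.s.; and since by Assumption~\ref{assumptions}(3) the integrand $\sigma(u(\cdot))$ is strongly measurable, adapted and a.s.\ in $L^{p}(0,T;\mathcal{L}_{2}(H,X))$ for some $p>2$, fixing $\alpha\in(1/p,1/2)$ and using Proposition~\ref{prop:reg_stoc_int_1} together with the embedding $W^{\alpha,p}(0,T;X)\hookrightarrow C^{\alpha-1/p}(0,T;X)$ gives that $\Phi(\cdot):=\int_{0}^{\cdot}\sigma(u(\tau))\dW_{\tau}$ has an a.s.\ continuous version, so $K_{1}(\omega):=\sup_{s\in[0,t]}\|\Phi(s)\|_{X}<\infty$ a.s.

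For the first map I would use that, since $u(s)\in V$ and $\varphi(s)\in\mathcal{D}_{u}^{\ast}=D(A_{u}^{\ast}(s))$, the bilinear form \eqref{eq:sesquilinear}, extended by density from $D(A_{u}(s))$ to $V$, satisfies $a(u(s);u(s),\varphi(s))=\langle u(s),-A_{u}^{\ast}(s)\varphi(s)\rangle_{X}=\langle u(s),\varphi'(s)\rangle_{X}$; hence $\bigl|\int_{0}^{t}a(u(s);u(s),\varphi(s))\,\ds\bigr|\le K(\omega)\int_{0}^{t}\|\varphi'(s)\|_{X^{\ast}}\,\ds\le\varepsilon^{-1}C(\omega)K(\omega)\,t^{\varepsilon}<\infty$ a.s. For the second map the estimate is immediate from the boundedness of $\Phi$: $\bigl|\int_{0}^{t}\bigl\langle\int_{0}^{s}\sigma(u(\tau))\dW_{\tau},\varphi'(s)\bigr\rangle\,\ds\bigr|\le K_{1}(\omega)\int_{0}^{t}\|\varphi'(s)\|_{X^{\ast}}\,\ds<\infty$ a.s.

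For the third map I would first fix the meaning of $U^{u}(t,s)A_{u}(s)$, for $s<t$, applied to an element of $X$ that need not lie in $\mathcal{D}_{u}$. By Lemma~\ref{lem:Adjoint estimates} the map $s\mapsto U^{u}(t,s)$ lies in $C^{1}([0,t);\mathcal{L}(X))$ with $\partial_{s}U^{u}(t,s)x=-U^{u}(t,s)A_{u}(s)x$ for $x\in\mathcal{D}_{u}$, so I read $U^{u}(t,s)A_{u}(s)$ as the bounded operator $-\partial_{s}U^{u}(t,s)\in\mathcal{L}(X)$, whose adjoint is $-\partial_{s}(U^{u}(t,s)^{\ast})$. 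Then for $y\in X$ and $x^{\ast}\in\mathcal{D}_{u}^{\ast}$ one has $\langle U^{u}(t,s)A_{u}(s)y,x^{\ast}\rangle=\langle y,-\partial_{s}(U^{u}(t,s)^{\ast}x^{\ast})\rangle=-\langle y,\varphi'(s)\rangle$, and taking $y=\int_{s}^{t}\sigma(u(\tau))\dW_{\tau}=\Phi(t)-\Phi(s)$ gives $\bigl|\int_{0}^{t}\langle U^{u}(t,s)A_{u}(s)\int_{s}^{t}\sigma(u(\tau))\dW_{\tau},x^{\ast}\rangle\,\ds\bigr|\le 2K_{1}(\omega)\int_{0}^{t}\|\varphi'(s)\|_{X^{\ast}}\,\ds<\infty$ a.s. (Alternatively one may keep $\|U^{u}(t,s)A_{u}(s)\|_{\mathcal{L}(X)}\le C(t-s)^{-1}$, cf.\ \eqref{eq:UAb} with $\beta=1$, and absorb the singularity using the a.s.\ H\"older bound $\|\Phi(t)-\Phi(s)\|_{X}\le[\Phi]_{C^{\alpha-1/p}}(t-s)^{\alpha-1/p}$.) Finally, to see that each of the three maps lands in $L^{0}(\Omega;\R)$ rather than just being a.s.\ finite, I would note that in every case the integrand $(\omega,s)\mapsto\langle\cdot,\varphi'(s)\rangle$ (resp.\ $a(u(s);u(s),\varphi(s))$) is jointly measurable — by strong measurability of $u$, of $\sigma(u(\cdot))$, of the stochastic convolution (Proposition~\ref{prop:reg_stoc_int_1}), and of $U^{u}$ and $\partial_{s}U^{u}$ (Proposition~\ref{prop:meas}, Lemma~\ref{lem:Adjoint estimates}) — so that, together with the a.s.\ bounds above, Fubini's theorem makes the $s$-integral a measurable function of $\omega$.

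I expect the only genuine obstacle to be the third map: the operator $U^{u}(t,s)A_{u}(s)$ carries a non-integrable $(t-s)^{-1}$ singularity and is a priori defined only on $\mathcal{D}_{u}$, whereas $\int_{s}^{t}\sigma(u(\tau))\dW_{\tau}$ takes values merely in $X$. Both difficulties are resolved simultaneously by moving $A_{u}(s)$ onto the test function and invoking $\varphi\in\Gamma^{u}_{t,\beta}$, which turns the singularity into the integrable $(t-s)^{-1+\varepsilon}$. The remaining ingredients — the extension of the form identity to $u(s)\notin\mathcal{D}_{u}$, and the joint measurability — I expect to be routine.
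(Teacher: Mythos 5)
Your proposal is correct and follows essentially the same route as the paper: in each term the operator $A_u(s)$ (or the singular factor $U^u(t,s)A_u(s)$) is moved onto the test function by duality, and the resulting singularity is controlled by the bound $\|((-A_u(s))^{\beta})^{*}\varphi'(s)\|_{X^{*}}\le C(t-s)^{-1+\varepsilon}$ from Definition~\ref{def:gamma}(3)/Lemma~\ref{lem.test}, combined with a.s.\ boundedness of $u$ and of the stochastic convolution. The only cosmetic difference is in the third term, where the paper inserts the factors $(-A_u)^{\pm\beta}$, $(-A_u)^{\pm\lambda}$ and invokes \eqref{eq:A-gUA1+b} directly, while you reuse the bound on $\varphi'$ already established in Lemma~\ref{lem.test} via the identification $U^u(t,s)A_u(s)=-\partial_s U^u(t,s)$ — the same estimate repackaged; your added remark on joint measurability (to land in $L^0(\Omega;\R)$) is a point the paper leaves implicit.
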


\begin{proof}
For the first term we obviously have
\begin{align*}
    &\left\vert a(u;u,\varphi(t))\right\vert=\left\vert\langle -A_{u}u,\varphi(t)\rangle\right\vert=\left\vert\langle u,(-A_{u})^{*}\varphi(t)\rangle\right\vert \leq C\|u\|_{X}\|(-A_{u})^{*}\varphi\|_{X}.
\end{align*}
For the second integral, we obtain
\begin{align*}
    &\int_{0}^{t}\left\vert\left\langle\int_{0}^{s}\sigma(u(\tau))\dW_{\tau},\varphi'(s)\right\rangle\right\vert~\textnormal{d}s = \int_{0}^{t}\left\vert\left\langle(-A_{u})^{-\beta}\int_{0}^{s}\sigma(u(\tau))\dW_{\tau},((-A_{u})^{\beta})^{*}\varphi'(s)\right\rangle\right\vert~\textnormal{d}s\\
    &\leq \int_{0}^{t}\left(\left\|(-A_{u})^{-\beta}\int_{0}^{s}\sigma(u(\tau))\dW_{\tau}\right\|_{X}\left\|((-A_{u})^{\beta})^{*}\varphi'(s)\right\|_{X^{*}}\right)\textnormal{d}s\\
    &\leq C\sup_{s\in[0,t]} \left\|(-A_{u})^{-\beta}\int_{0}^{s}\sigma(u(\tau))\dW_{\tau}\right\|_{X}\int_{0}^{t}\dfrac{1}{(t-s)^{1-\eps}}~\textnormal{d}s,
\end{align*}
where we use property (3) of Definition \ref{def:gamma}.

For the third term, setting $\varphi(s)~:=~U^{u}(t,s)^{\ast}x^{\ast}$ for $x^{*}\in \mathcal{D}_{u}^{*}$, we have
\begin{align*}
    &\int_{0}^{s}\left|\left\langle U^u(s, r) A_{u}(r) \int_{r}^{s}\sigma(u(\tau))\dW_{\tau}, x^{*}\right\rangle\right|\,\textnormal{d}r\\
    & =\int_{0}^{s}\left|\left\langle(-A_{u}(s))^{-\lambda} U^u(s, r)(A_{u}(r))(-A_{u}(r))^{\beta}(-A_{u}(r))^{-\beta} \int_{r}^{s}\sigma(u(\tau))\dW_{\tau},\left((-A_{u}(s))^{\lambda}\right)^{*} x^{*}\right\rangle\right|\,\textnormal{d}r\\
    &= \int_{0}^{s}\left|\left\langle (-A_{u}(r))^{-\beta}\int_{r}^{s}\sigma(u(\tau))\dW_{\tau},((-A_{u}(r))^{1+\beta})^{\ast}U^{u}(s, r)^\ast\left((-A_{u}(s))^{-\lambda}\right)^{\ast}\left((-A_{u}(s))^{\lambda}\right)^{*} x^{*}\right\rangle\right|\,\textnormal{d}r\\
    &\leq\int_{0}^{s} \bigg( \left\|(-A_{u}(r))^{-\beta}\int_{r}^{s}\sigma(u(\tau))\dW_{\tau}\right\|_{X} \\
    &\qquad \qquad \left\|((-A_{u}(r))^{1+\beta})^{*}U^{u}(s, r)^\ast\left((-A_{u}(s))^{-\lambda}\right)^{*}\left((-A_{u}(s))^{\lambda}\right)^{*} x^{*}\right\|_{X^{*}}\bigg)\,\textnormal{d}r \\
    &\leq C_{\sigma} \int_{0}^{s}(s-r)^{-1-\beta+\lambda}\left\|\left((-A_{u}(s))^{\lambda}\right)^{*} x^{*}\right\|_{X^{*}}\,\textnormal{d}r \leq C_{\sigma} s^{\lambda-\beta}\left\|\left((-A_{u}(s))^{\lambda}\right)^{*} x^{*}\right\|_{X^{*}}\\
    &\leq C^{\prime}_\sigma\left\|\left((-A_{u}(s))^{\lambda}\right)^{*} x^{*}\right\|_{X^{*}},
\end{align*}
 where we used \eqref{eq:A-gUA1+b} for $A_{u}^\ast(\cdot)$. The last term is bounded for $\lambda\in (\beta,\delta^\ast)$.
\end{proof}

Collecting all the previous deliberations, we now state the main result of this paper.

\begin{theorem}\label{thm:equivalence 1}
Let {\bf Assumption~\ref{assumptions}} be satisfied and let $\beta\in(0,\delta^\ast)$. Then the following assertions are valid.
\begin{itemize}
    \item[\textnormal{1.)}] If there exists a pathwise mild solution {$u$} for~\eqref{eq:3.Qspde} on the interval $[0,T]$. Then, {$u$} satisfies \eqref{weak} for all $\varphi\in\Gamma^{u}_{t,\beta}$, $t\in[0,T]$, $\mathbb{P}$-a.s.
    
    \item[\textnormal{2.)}] If there exists a weak solution $u$ for~\eqref{eq:3.Qspde} satisfying~\eqref{weak} for all $\varphi\in\Gamma^{u}_{t, \beta}$, {$t \in [0,T]$, $\mathbb{P}$-a.s.}. Then, $u$ satisfies \eqref{mild} for $t\in[0,T]$,  $\mathbb{P}$-a.s.
\end{itemize}
\end{theorem}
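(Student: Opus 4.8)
The plan is to exploit the ``key idea'' outlined above: freeze the quasilinear problem at the pathwise mild solution $u$ and treat it as the linear nonautonomous SPDE \eqref{linear:Q} with $v=u$, so that the equivalence machinery of \cite[Section 4.4]{PrVe15} applies verbatim to the frozen problem. Since $u$ is its own driver, the conclusions transfer back to the quasilinear equation \eqref{eq:3.Qspde}. Both implications rest on the fundamental-theorem-of-calculus identity of Lemma~\ref{lem:fun_cal} together with the regularity estimates \eqref{eq:A-gUA1+b} and the admissibility of the test functions $\varphi(s)=U^{u}(t,s)^{\ast}x^{\ast}\in\Gamma^{u}_{t,\beta}$ established in Lemma~\ref{lem.test}; Lemma~\ref{lem:well-defined} guarantees that every term below is finite, so the manipulations are legitimate.

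\textbf{Step 1: mild $\Rightarrow$ weak.} Starting from the pathwise mild representation \eqref{mild} with $u^{0}=0$, I would test against an arbitrary $\varphi\in\Gamma^{u}_{t,\beta}$ and rewrite each of the two terms. For the first term $-\int_0^t U^{u}(t,s)A_u(s)\int_s^t\sigma(u(\tau))\,\dW_\tau\,\ds$, pair with $\varphi(t)=x^{\ast}$ is not directly available since $\varphi$ is time-dependent, so instead the approach is to interchange the $\ds$ and the (path-by-path) $\dW_\tau$ integrals by a stochastic Fubini argument (legitimate by Assumption~\ref{assumptions}(3) and the integrability bounds of Lemma~\ref{lem:well-defined}) and then apply the identity \eqref{identity} of Lemma~\ref{lem:fun_cal} pathwise in the form $\int_s^t\langle U^{u}(t,r)A_u(r)\,\xi,x^{\ast}\rangle\,\dr=\langle U^{u}(t,s)\xi,x^{\ast}\rangle-\langle\xi,x^{\ast}\rangle$ with $\xi=\sigma(u(\tau))$ fixed. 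For the second term $U^{u}(t,0)\int_0^t\sigma(u(s))\,\dW_s$, I would use the product-rule/integration-by-parts identity recorded in Remark~\ref{rem:ftc} together with $\frac{d}{ds}U^{u}(t,s)^{\ast}x^{\ast}=-A_u^{\ast}(s)U^{u}(t,s)^{\ast}x^{\ast}$ from Lemma~\ref{lem:Adjoint estimates}. Collecting the pieces and recognizing $\langle -A_u(s)u(s),\varphi(s)\rangle = a(u(s);u(s),\varphi(s))$ via \eqref{eq:sesquilinear} reproduces exactly the five terms of \eqref{weak}. Because this holds for all $x^{\ast}\in\mathcal{D}^{\ast}_u$ and the corresponding $\varphi$'s, and these are dense enough to characterize the weak formulation, the first assertion follows.

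\textbf{Step 2: weak $\Rightarrow$ mild.} Here I would run the computation of Step~1 in reverse. Fix $t\in[0,T]$, $x^{\ast}\in\mathcal{D}^{\ast}_u$, and insert the specific test function $\varphi(s)=U^{u}(t,s)^{\ast}x^{\ast}$, which lies in $\Gamma^{u}_{t,\beta}$ by Lemma~\ref{lem.test}, into the weak identity \eqref{weak}. With this choice $\varphi(t)=x^{\ast}$, $\varphi(0)=U^{u}(t,0)^{\ast}x^{\ast}$, and $\varphi'(s)=-A_u^{\ast}(s)U^{u}(t,s)^{\ast}x^{\ast}$, so the term $\int_0^t\langle u(s),\varphi'(s)\rangle\,\ds + \big(-\int_0^t a(u(s);u(s),\varphi(s))\,\ds\big)$ telescopes: $-a(u(s);u(s),\varphi(s))=\langle A_u(s)u(s),\varphi(s)\rangle$ cancels against $\langle u(s),\varphi'(s)\rangle=-\langle A_u(s)U^{u}(t,s)u(s),x^{\ast}\rangle$ up to boundary contributions — more precisely, after re-pairing one recognizes $\langle U^{u}(t,0)u^{0},x^{\ast}\rangle$ (which vanishes since $u^{0}=0$) plus exactly the telescoped boundary term $\langle u(t),x^{\ast}\rangle$. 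The stochastic terms $-\int_0^t\langle\int_0^s\sigma(u(\tau))\dW_\tau,\varphi'(s)\rangle\,\ds$ and $\langle\int_0^t\sigma(u(s))\dW_s,\varphi(t)\rangle$, after inserting $\varphi'(s)=-A_u^{\ast}(s)U^{u}(t,s)^{\ast}x^{\ast}$ and applying stochastic Fubini, combine to $\langle U^{u}(t,0)\int_0^t\sigma(u(s))\dW_s,x^{\ast}\rangle - \int_0^t\langle U^{u}(t,s)A_u(s)\int_s^t\sigma(u(\tau))\dW_\tau,x^{\ast}\rangle\,\ds$ by precisely the same pathwise identity \eqref{identity} used in Step~1. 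Hence $\langle u(t),x^{\ast}\rangle$ equals $\langle$(right-hand side of \eqref{mild})$,x^{\ast}\rangle$ for every $x^{\ast}\in\mathcal{D}^{\ast}_u$; since $\mathcal{D}^{\ast}_u$ is dense in $X^{\ast}=X$, the mild identity \eqref{mild} follows.

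\textbf{Main obstacle.} The routine parts (telescoping, density) are straightforward; the delicate point is justifying the interchange of the Lebesgue $\ds$-integral with the It\^o integral $\dW_\tau$ when the evolution family $U^{u}(t,s)$ and the operator $A_u(s)$ — which are only $\mathcal{F}_t$-measurable, \emph{not} adapted — sit inside. This is exactly the non-adaptedness issue that motivated the pathwise formulation in the first place. The resolution is that, after the integration-by-parts that produced \eqref{mild}, the inner object $\int_s^t\sigma(u(\tau))\dW_\tau$ is a genuine (adapted) It\^o integral over $[s,t]$ for each fixed $s$, and the outer $\ds$-integration against the \emph{deterministic-in-$\omega$-frozen} kernel can be handled pathwise using the pointwise bounds from Lemma~\ref{lem:well-defined} and the strong measurability of $U^{u}$ from Proposition~\ref{prop:meas}; a classical stochastic Fubini theorem then applies on each realization, or equivalently one invokes the version already used in \cite[Section 4.4]{PrVe15}. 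Care is also needed to ensure the $\Gamma^{u}_{t,\beta}$ integrability exponent $\varepsilon=\lambda-\beta>0$ keeps all time-singular kernels of order $(t-s)^{-1+\varepsilon}$ integrable, which is precisely why the restriction $\beta\in(0,\delta^{\ast})$ is imposed.
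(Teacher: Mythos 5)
Your Step 2 is essentially the paper's argument for the implication weak $\Rightarrow$ mild: insert $\varphi(s)=U^{u}(t,s)^{\ast}x^{\ast}$, use $\varphi'(s)=(-A_{u}(s))^{\ast}\varphi(s)$ to cancel the drift term against $\int_0^t\langle u(s),\varphi'(s)\rangle\,\ds$ \emph{exactly} (no telescoping of $\tfrac{d}{ds}\langle u(s),\varphi(s)\rangle$ is needed, and none would be available since $u$ is not differentiable in time), then split $\int_0^s\sigma\,\dW=\int_0^t\sigma\,\dW-\int_s^t\sigma\,\dW$ and apply \eqref{identity} to the fixed random element $\int_0^t\sigma(u(\tau))\,\dW_\tau$, finishing by density of $\mathcal{D}_u^{\ast}$. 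No stochastic Fubini is involved there; invoking one is unnecessary and slightly misleading, but the substance matches the paper.

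The genuine problem is in Step 1. You propose to interchange the outer Lebesgue integral $\ds$ with the inner It\^o integral $\dW_\tau$ in $\int_0^t U^{u}(t,s)A_u(s)\int_s^t\sigma(u(\tau))\,\dW_\tau\,\ds$ and then apply \eqref{identity} with $\xi=\sigma(u(\tau))$ ``fixed''. After that interchange the stochastic integrand becomes $\tau\mapsto\bigl(\int_0^\tau U^{u}(t,s)A_u(s)\,\ds\bigr)\sigma(u(\tau))$, which is $\mathcal{F}_t$-measurable but not $\mathcal{F}_\tau$-adapted, so the resulting object is not an It\^o integral; the piece $\int_0^t U^{u}(t,\tau)\sigma(u(\tau))\,\dW_\tau$ that your computation would then produce is exactly the ill-defined stochastic convolution that the pathwise mild formulation was invented to avoid. ``A classical stochastic Fubini theorem applies on each realization'' is not a valid justification: the It\^o integral is not defined pathwise, and the stochastic Fubini theorem requires adaptedness of the integrand in the stochastic variable. (Such interchanges can be given meaning via Skorokhod or Russo--Vallois forward integrals, as in Al\'os--Le\'on--Nualart and Pronk--Veraar, but neither you nor the present paper develops that machinery in this proof.) The paper's proof of part 1 avoids the interchange entirely: it splits $\int_s^t\sigma\,\dW=\int_0^t\sigma\,\dW-\int_0^r\sigma\,\dW$ so that \eqref{identity} is only ever applied $\omega$-wise to the already-computed random elements $\int_0^t\sigma\,\dW$ and $\int_0^r\sigma\,\dW$ of $X$, uses only the deterministic Fubini theorem for the double time integral in \eqref{id3.3}, and then reaches \eqref{weak} through the successive test-vector choices $x^{\ast}=(-A_u(s))^{\ast}\varphi(s)$, $x^{\ast}=\varphi(t)$ and $x^{\ast}=\varphi'(t)$ in \eqref{id2.2}, combined with the product-rule identity \eqref{second identity}. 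You need to replace your interchange by this purely algebraic route (or else develop an anticipative integration theory); as written, Step 1 does not close.
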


\begin{proof}
1.) We start by showing that a pathwise mild solution of \eqref{eq:3.Qspde} is also a weak solution. Assume that \eqref{mild} holds and fix $t\in[0,T]$. Let  $\lambda \in (\beta, \delta^\ast)$ and $u$ be the mild solution of~\eqref{eq:3.Qspde} with zero initial condition, i.e.
\begin{align}
\label{equiv1}
    &u(t)=-\int_{0}^{t}U^{u}(t,r)A_{u}(r)\int_{r}^{t}\sigma(u(\tau))\dW_{\tau}\dr + U^{u}(t,0)\int_{0}^{t}\sigma(u(\tau))\dW_{\tau}.
\end{align}
Applying $x^{\ast}\in \mathcal{D}_{u}^{\lambda *}$ to~\eqref{equiv1} and using that $\int\limits_{r}^t\sigma(u(\tau))\textnormal{d}W_\tau =\int\limits_{0}^t\sigma(u(\tau))\textnormal{d}W_\tau  -\int\limits_0^r \sigma(u(\tau))\textnormal{d}W_\tau $ to obtain
\begin{align*}
    \int_{0}^{t}\left\langle U^{u}(t, r) A_{u}(r) \int_{r}^{t}\sigma(u (\tau))\dW_{\tau}, x^{*}\right\rangle\dr
    & = \int_{0}^{t}\left\langle U^{u}(t, r) A_{u}(r) \int_{0}^{t}\sigma(u(\tau))\dW_{\tau}, x^{*}\right\rangle\dr \\
    & \quad - \int_{0}^{t}\left\langle U^{u}(t, r) A_{u}(r) \int_{0}^{r}\sigma(u(\tau))\dW_{\tau}, x^{*}\right\rangle\dr,
\end{align*}
further leads to
\begin{align}
\label{id2.1}
\langle u(t),x^{\ast}\rangle & = -\int_{0}^{t}\left\langle U^{u}(t,r)A_{u}(r)\int_{0}^{t}\sigma(u(\tau))\dW_{\tau},x^{\ast} \right\rangle\dr + \left\langle U^{u}(t,0)\int_{0}^{t}\sigma(u(\tau))\dW_{\tau},x^{\ast}\right\rangle \\
    &\quad +\int_{0}^{t}\left\langle U^{u}(t,r)A_{u}(r)\int_{0}^{r}\sigma(u(\tau))\dW_{\tau},x^{\ast} \right\rangle\dr. \nonumber
\end{align}
We use the identity \eqref{identity} for $x=\int_{0}^{t}\sigma(u(\tau))\dW_{\tau}$ and rewrite the first term on the right-hand-side as
\begin{align}
\label{equiv2}
     &\int_{0}^{t}\left\langle U^{u}(t, r) A_{u}(r) \int_{0}^{t}\sigma(u(\tau))\dW_{\tau}, x^{*}\right\rangle \dr \\
     &\; = \left\langle U^{u}(t, 0) \int_{0}^{t}\sigma(u(\tau))\dW_{\tau}, x^{*}\right\rangle - \left\langle \int_{0}^{t}\sigma(u(\tau))\dW_{\tau}, x^{*}\right\rangle.\nonumber
\end{align}
Using \eqref{equiv2} in \eqref{id2.1}, we obtain 
\begin{align}
\label{id2.2}
    \langle u(t),x^{\ast}\rangle & 
    = \int_{0}^{t}\left\langle U^{u}(t,r)A_{u}(r)\int_{0}^{r}\sigma(u(\tau))\dW_{\tau},x^{\ast} \right\rangle \dr + \left\langle\int_{0}^{t}\sigma(u(\tau))\dW_{\tau},x^{\ast}\right\rangle.
\end{align}
In order to show the equivalence with the weak solution, we need to use test functions $\varphi \in \Gamma^u_{t,\beta}$. To this aim we choose $x^{\ast} = \left(-A_{u}(s)\right)^{\ast}\varphi(s)$ {in \eqref{id2.2} and integrate over time to obtain} 
\begin{align}
    \label{id3.3}
    &\int_{0}^{t}\left\langle u(s), \left(-A_{u}(s)\right)^{*} \varphi(s)\right\rangle\ds - \int_{0}^{t}\left\langle \int_{0}^{s}\sigma(u(\tau))\dW_{\tau}, \left(-A_{u}(s)\right)^{*} \varphi(s)\right\rangle\ds\\
    &=\int_{0}^{t} \int_{0}^{s}\left\langle U^{u}(s, r) A_{u}(r) \int_{0}^{r}\sigma(u(\tau))\dW_{\tau} , \left(-A_{u}(s)\right)^{*} \varphi(s)\right\rangle\dr\,\ds\nonumber\\
    &=\int_{0}^{t} \int_{r}^{t}\left\langle U^{u}(s, r) A_{u}(r) \int_{0}^{r}\sigma(u(\tau))\dW_{\tau}, \left(-A_{u}(s)\right)^{*} \varphi(s)\right\rangle \ds\,\dr, \nonumber
\end{align}
where we used Fubini's theorem in the last step.
Using \eqref{eq:dt-semigroup}, we obtain for all $x \in X$ and $0 \leq r \leq t \leq T$,
\begin{align*}
    & \int_{r}^{t}\left\langle U^{u}(s, r) A_{u}(r) x, \left(-A_{u}(s)\right)^{*} \varphi(s)\right\rangle\ds = -  \int_{r}^{t}\left\langle A_{u}(s) U^{u}(s, r) A_{u}(r) x, \varphi(s)\right\rangle\ds\nonumber\\
    & = -\int_{r}^{t}\left\langle \dfrac{d}{\ds}U^{u}(s, r) A_{u}(r) x, \varphi(s)\right\rangle\ds\nonumber\\
    & = - \int_{r}^{t}\dfrac{d}{\ds}\left\langle U^{u}(s, r) A_{u}(r) x, \varphi(s)\right\rangle\ds + \int_{r}^{t}\left\langle U^{u}(s, r) A_{u}(r) x, \varphi^{\prime}(s)\right\rangle\ds\nonumber\\
    & = - \left\langle U^{u}(t, r) A_{u}(r) x, \varphi(t)\right\rangle + \left\langle U^{u}(r, r) A_{u}(r) x, \varphi(r)\right\rangle + \int_{r}^{t}\left\langle U^{u}(s, r) A_{u}(r) x, \varphi^{\prime}(s)\right\rangle \ds.\nonumber
\end{align*}
Furthermore
\begin{align} 
\label{second identity}
    & \langle U^{u}(t, r) A_{u}(r) x, \varphi(t)\rangle + \left\langle x, \left(-A_{u}(r)\right)^{*} \varphi(r)\right\rangle \\
    &\quad = - \int_{r}^{t}\left\langle U^{u}(s, r) A_{u}(r) x, \left(-A_{u}(s)\right)^{*} \varphi(s)\right\rangle\ds + \int_{r}^{t}\left\langle U^{u}(s, r) A_{u}(r) x, \varphi^{\prime}(s)\right\rangle\ds. \nonumber
\end{align}
Note that the expressions above are  well-defined. Indeed, for $\varphi \in \Gamma_{t, \beta}^u$ and $x\in X$, choosing $1>\lambda>\theta >0$ and applying\eqref{eq:A-gUA1+b}, we infer that
\begin{align*}
    &\left|\left\langle U^{u}(s, r) A_{u}(r) x, \left(-A_{u}(s)\right)^{*} \varphi(s)\right\rangle\right|  = \left|\left\langle(-A_{u}(s))^{-\lambda} U^{u}(s, r) A_{u}(r) x,\left((-A_{u}(s))^{1+\lambda}\right)^{*} \varphi(s)\right\rangle\right|\\
&\qquad \leq \|(-A_{u}(s))^{-\lambda } U^{u}(s, r) (-A_{u}(r))^{1+\theta}(-A_{u}(r))^{-\theta} x\|_X\|\left((-A_{u}(s))^{1+\lambda}\right)^{*} \varphi(s)\|_{X^\ast} \\
& \qquad \le C(s-r)^{-1+\lambda-\theta}(t-s)^{-1+\varepsilon}\|(-A_{u}(r))^{-\theta} x\|_X\\
&\qquad \le C(s-r)^{-1+\lambda-\theta}(t-s)^{-1+\varepsilon}\|x\|_X.
\end{align*}
Setting $x = \int_0^r \sigma(u(\tau))\dW_\tau$ in \eqref{second identity} further leads to
\begin{align}
\label{equiv3}
    & \left\langle U^{u}(t, r) A_{u}(r) \int_{0}^{r}\sigma(u(\tau))\dW_{\tau}, \varphi(t)\right\rangle + \left\langle \int_{0}^{r}\sigma(u(\tau))\dW_{\tau}, \left(-A_{u}(r)\right)^{*} \varphi(r)\right\rangle\\
    & = - \int_{r}^{t}\left\langle U^{u}(s, r) A_{u}(r) \int_{0}^{r}\sigma(u(\tau))\dW_{\tau}, \left(-A_{u}(s)\right)^{*} \varphi(s)\right\rangle\ds \nonumber \\
    &\qquad + \int_{r}^{t}\left\langle U^{u}(s, r) A_{u}(r) \int_{0}^{r}\sigma(u(\tau))\dW_{\tau}, \varphi^{\prime}(s)\right\rangle\ds. \nonumber
\end{align}
Next we use \eqref{equiv3} to deal with the right-hand-side of \eqref{id3.3}. From \eqref{id3.3} we get
\begin{align}
\label{equiv4}
     \int_{0}^{t}\left\langle u(s), \left(-A_{u}(s)\right)^{*} \varphi(s)\right\rangle\ds & =\int_{0}^{t} \int_{r}^{t}\left\langle U^{u}(s, r) A_{u}(r) \int_{0}^{r}\sigma(u(\tau))\dW_{\tau}, \left(-A_{u}(s)\right)^{*} \varphi(s)\right\rangle \ds\,\dr \\
     &\quad + \int_{0}^{t}\left\langle \int_{0}^{s}\sigma(u(\tau))\dW_{\tau}, \left(-A_{u}(s)\right)^{*} \varphi(s)\right\rangle\ds. \nonumber
\end{align}
Thus, using \eqref{equiv3} in \eqref{equiv4} results in
\begin{align*}
    \int_{0}^{t}\left\langle u(s), \left(-A_{u}(s)\right)^{*} \varphi(s)\right\rangle\ds & = - \int_{0}^{t}\left\langle U^{u}(t, r) A_{u}(r) \int_{0}^{r}\sigma(u(\tau))\dW_{\tau}, \varphi(t)\right\rangle\dr \\
    & \quad -\int_{0}^{t}\left\langle \int_{0}^{r}\sigma(u(\tau))\dW_{\tau}, \left(-A_{u}(r)\right)^{*} \varphi(r)\right\rangle\dr \\
    & \quad + \int_{0}^{t}\int_{r}^{t}\left\langle U^{u}(s, r) A_{u}(r) \int_{0}^{r}\sigma(u(\tau))\dW_{\tau}, \varphi^{\prime}(s)\right\rangle\ds\,\dr \\ 
    & \quad + \int_{0}^{t}\left\langle \int_{0}^{s}\sigma(u(\tau))\dW_{\tau}, \left(-A_{u}(s)\right)^{*} \varphi(s)\right\rangle\ds.
\end{align*}
The above expression on simplification results in
\begin{align}
    \label{eq:3identity}
    \int_{0}^{t}\left\langle u(s), \left(-A_{u}(s)\right)^{*} \varphi(s)\right\rangle\ds & = - \int_{0}^{t}\left\langle U^{u}(t, r) A_{u}(r) \int_{0}^{r}\sigma(u(\tau))\dW_{\tau}, \varphi(t)\right\rangle\dr \\ 
    &\quad + \int_{0}^{t}\int_{r}^{t}\left\langle U^{u}(s, r) A_{u}(r) \int_{0}^{r}\sigma(u(\tau))\dW_{\tau}, \varphi^{\prime}(s)\right\rangle\ds\,\dr. \nonumber
\end{align}
Moreover, choosing $x^{\ast} = \varphi(t)$ in \eqref{id2.2}, we get
\begin{align}
\label{eq:mildtest}
    \left\langle u(t), \varphi(t)\right\rangle & = \int_{0}^{t}\left\langle U^{u}(t, r) A_{u}(r)\int_{0}^{r}\sigma(u(\tau))\dW_{\tau}, \varphi(t)\right\rangle\dr
    \\
    &\quad + \left\langle \int_{0}^{t}\sigma(u(\tau))\dW_{\tau}, \varphi(t)\right\rangle. \nonumber
\end{align}
Using \eqref{eq:mildtest} and \eqref{eq:3identity}, we obtain
\begin{align}
   \label{equiv5}
   \int_{0}^{t}\left\langle u(s),\left(-A_{u}(s)\right)^\ast \varphi(s)\right\rangle\ds & = - \left\langle u(t),\varphi(t)\right\rangle + \left\langle\int_{0}^{t}\sigma(u(\tau))\dW_{\tau},\varphi(t)\right\rangle\\
   &\quad + \int_{0}^{t}\int_{r}^{t}\left\langle U^{u}(s, r) A_{u}(r) \int_{0}^{r}\sigma(u(\tau))\dW_{\tau}, \varphi^{\prime}(s)\right\rangle\ds\,\dr. \nonumber
 \end{align}
 Further, choosing $x^\ast = \varphi^\prime(t)$ in \eqref{id2.2}, we can express the integrand of the last term in the right-hand-side of \eqref{equiv5} as
 \begin{align}
\label{equiv6}
\int_{0}^{t}\left\langle U^{u}(t, r) A_{u}(r)\int_{0}^{r}\sigma(u(\tau))\dW_{\tau}, \varphi^\prime(t)\right\rangle\dr = 
    \left\langle u(t), \varphi^\prime(t)\right\rangle -
\left\langle \int_{0}^{t}\sigma(u(\tau))\dW_{\tau}, \varphi^\prime(t)\right\rangle. 
\end{align}
Using Fubini's theorem and plugging in the relation \eqref{equiv6} in \eqref{equiv5}, we infer that
  \begin{align*}
   \int_{0}^{t}\left\langle u(s),\left(-A_{u}(s)\right)^\ast \varphi(s)\right\rangle\ds & = - \left\langle u(t),\varphi(t)\right\rangle + \left\langle\int_{0}^{t}\sigma(u(\tau))\dW_{\tau},\varphi(t)\right\rangle\\
   & + \int_{0}^{t}\left\langle u(s),\varphi^{\prime}(s)\right\rangle\ds 
   - \int_{0}^{t}\left\langle\int_{0}^{s}\sigma(u(\tau))\dW_{\tau},\varphi^{\prime}(s)\right\rangle\ds.
\end{align*}
From the previous expression we conclude that $u$ satisfies \eqref{weak} and is therefore a weak solution of~\eqref{eq:3.Qspde}.\\

\noindent 2.) Let $u$ be a weak solution of \eqref{eq:3.Qspde}. Then $u \in \mathcal{Z}$ and satisfies the weak formulation~\eqref{weak}, namely
\begin{align*}
 \langle u(t), \varphi(t)\rangle &= - \int_{0}^{t}a\left( u(s); u(s), \varphi(s) \right) \ds+\int_{0}^{t}\left\langle u(s), \varphi^{\prime}(s)\right\rangle \ds \nonumber \\ 
& \quad-\int_{0}^{t}\left\langle \int_{0}^{s}\sigma(u(\tau))~\dW_{\tau}, \varphi^{\prime}(s)\right\rangle\ds+\left\langle \int_{0}^{t}\sigma(u(\tau))\dW_{\tau}, \varphi(t)\right\rangle \nonumber.
\end{align*}
In particular, by Theorem~\ref{thm:semigp} there exists an evolution family $U^u \colon \Delta \times \Omega \to \mathcal{L}(X)$ generated by the operator $A_u$. Using the relation between $a(u;\cdot,\cdot)$ and $A_u$, recall~\eqref{eq:sesquilinear},  
the previous expression rewrites as 
\begin{align}
\label{equiv7}
\langle u(t), \varphi(t)\rangle &= - \int_{0}^{t}\left\langle u(s), \left(-A_{u}(s)\right)^{\ast}\varphi(s) \right\rangle \ds+\int_{0}^{t}\left\langle u(s), \varphi^{\prime}(s)\right\rangle \ds \\ 
&\quad-\int_{0}^{t}\left\langle \int_{0}^{s}\sigma(u(\tau))~\dW_{\tau}, \varphi^{\prime}(s)\right\rangle\ds+\left\langle \int_{0}^{t}\sigma(u(\tau))\dW_{\tau}, \varphi(t)\right\rangle. \nonumber
\end{align}
From Lemma~\ref{lem.test}, for a fixed $t\in[0,T]$ and $x^{\ast}\in \mathcal{D}_{u}^{*}$, $\varphi(s)=U^{u}(t, s)^{*} x^{*}$ belongs to $\Gamma^{u}_{t,\beta}$. Now with this choice of test functions and using that $\varphi^{\prime}(s) = \left(-A_{u}(s)\right)^{*} \varphi(s)$, we obtain from~\eqref{equiv7} that
\begin{align*}
&\langle u(t), U^{u}(t, t)^{*} x^{*}\rangle =  - \int_{0}^{t}\left\langle u(s), \left(-A_{u}(s)\right)^{\ast}U^{u}(t, s)^{*} x^{*} \right\rangle \ds \\
&\qquad + \int_{0}^{t}\left\langle u(s), (-A_{u}(s))^{\ast}\varphi(s)\right\rangle \ds
- \int_{0}^{t}\left\langle \int_{0}^{s}\sigma(u(\tau))~\dW_{\tau}, \left(-A_{u}(s)\right)^{\ast}\varphi(s)\right\rangle\ds \\
&\qquad + \left\langle \int_{0}^{t}\sigma(u(\tau))~\dW_{\tau}, U^{u}(t, t)^{*} x^{*}\right\rangle,
\end{align*}
which simplifies further to
\begin{align*}
\langle u(t), x^\ast\rangle & =  - \int_{0}^{t}\left\langle u(s), \left(-A_{u}(s)\right)^{\ast}U^{u}(t, s)^{*} x^{*} \right\rangle \ds \\
&\quad + \int_{0}^{t}\left\langle u(s), (-A_{u}(s))^{\ast}U^{u}(t, s)^{*} x^{*}\right\rangle \ds  + \left\langle \int_{0}^{t}\sigma(u(\tau))~\dW_{\tau}, x^{*}\right\rangle \\
& \quad
- \int_{0}^{t}\left\langle \int_{0}^{s}\sigma(u(\tau))\dW_{\tau}, \left(-A_{u}(s)\right)^{\ast}U^{u}(t, s)^{*} x^{*}\right\rangle\ds \\
& =   \left\langle \int_{0}^{t}\sigma(u(\tau))\dW_{\tau}, x^{*}\right\rangle -\int_{0}^{t}\left\langle U^{u}(t, s)\left(-A_{u}(s)\right)\int_{0}^{s}\sigma(u(\tau))\dW_{\tau},  x^{*}\right\rangle\ds.
\end{align*}
All in all, we obtained that
\begin{align*}
    &\left\langle u(t), x^{*}\right\rangle=\int_{0}^{t}\left\langle U^{u}(t, s) A_{u}(s) \int_{0}^{s}\sigma(u(\tau))\dW_{\tau}, x^{*}\right\rangle\ds + \left\langle \int_{0}^{t}\sigma(u(\tau))\dW_{\tau}, x^{*}\right\rangle.
\end{align*}
Splitting the stochastic integral in the first term on the right-hand-side, into two stochastic integrals results in
\begin{align}
\label{equiv8}
\langle u(t), x^\ast \rangle    & = \int_{0}^{t}\left\langle U^{u}(t, s) A_{u}(s) \left(\int_{0}^{t}\sigma(u(\tau))\dW_{\tau} - \int_{s}^{t}\sigma(u(\tau))\dW_{\tau}\right), x^{*}\right\rangle\ds \nonumber \\
&\quad + \left\langle \int_{0}^{t}\sigma(u(\tau))\dW_{\tau}, x^{*}\right\rangle \nonumber\\
    &=\int_{0}^{t}\left\langle U^{u}(t, s) A_{u}(s) \int_{0}^{t}\sigma(u(\tau))\dW_{\tau}, x^{*}\right\rangle\ds  +\left\langle \int_{0}^{t}\sigma(u(\tau))\dW_{\tau}, x^{*}\right\rangle \\
    &\quad - \int_{0}^{t}\left\langle U^{u}(t, s) A_{u}(s) \int_{s}^{t}\sigma(u(\tau))\dW_{\tau}, x^{*}\right\rangle\ds. \nonumber
\end{align}
Using the identity \eqref{identity} with $x = \int_0^t \sigma(u(\tau))\dW_\tau$, the first term on the right-hand-side of \eqref{equiv8} can be written as 
\begin{align*}
    &\int_{0}^{t}\left\langle U^{u}(t, s) A_{u}(s) \int_{0}^{t}\sigma(u(\tau))\dW_{\tau}, x^{*}\right\rangle\ds 
     \\
     &\quad 
    = \left\langle U^{u}(t,0)\int_{0}^{t}\sigma(u(\tau))\dW_{\tau}, x^{*}\right\rangle - \left\langle\int_{0}^{t}\sigma(u(\tau))\dW_{\tau}, x^{*}\right\rangle.
\end{align*}
Using the above identity in \eqref{equiv8} entails
\begin{align*}
\left\langle u(t), x^{*}\right\rangle = \left\langle U^{u}(t, 0) \int_{0}^{t}\sigma(u(\tau))\dW_{\tau}, x^{*}\right\rangle - \int_{0}^{t}\left\langle U^{u}(t, s) A_{u}(s) \int_{s}^{t}\sigma(u(\tau))\dW_{\tau}, x^{*}\right\rangle\ds.
\end{align*}
In the previous deliberations $x^\ast \in \mathcal{D}_{u}^{*}$. Since $\mathcal{D}_{u}^{*}$ is dense in $X$, the result can be extended to every $x^\ast \in X$ by the Hahn-Banach theorem. This justifies that the weak solution of~\eqref{eq:3.Qspde} satisfies the mild formulation \eqref{mild}.
\end{proof}

For time-independent test functions, we recover the equivalence of the pathwise mild solution with the following standard weak formulation  
\begin{align}
&\langle u(t), x^{\ast}\rangle= - \int_{0}^{t}a\left(u(s); u(s), x^{\ast}\right) \ds+\left\langle \int_{0}^{t}\sigma(u(s))\dW_{s}, x^{*}\right\rangle,\label{weak2} 
\end{align}
for $x^{\ast}\in \mathcal{D}_{u}^{*}$.

\begin{theorem} \label{thm:equivalence 2} 
Let {\bf Assumption~\ref{assumptions}} be satisfied. Then the following assertions are valid.
\begin{itemize}
\item[\textnormal{1.)}] If there exists a pathwise mild solution {$u$} for~\eqref{eq:3.Qspde} on the interval $[0,T]$. Then, {$u$} satisfies \eqref{weak2} for all $x^\ast \in \mathcal{D}_{u}^{*}$, $t \in [0,T]$, $\mathbb{P}$-a.s.
\item[\textnormal{2.)}]
If there exists a weak solution $u$ for~\eqref{eq:3.Qspde} satisfying~\eqref{weak2} for all $x^{\ast}\in \mathcal{D}_{u}^{*}$, $t \in [0,T]$, $\mathbb{P}$-a.s. Then, $u$ satisfies \eqref{mild} for $t\in[0,T]$,  $\mathbb{P}$-a.s.
\end{itemize}
\end{theorem}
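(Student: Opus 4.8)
The plan is to deduce Theorem~\ref{thm:equivalence 2} from Theorem~\ref{thm:equivalence 1} by observing that a time-independent test function $x^\ast\in\mathcal{D}_u^\ast$ is a degenerate case of the time-dependent test functions considered there. For assertion 1.), I would start from a pathwise mild solution $u$ and apply part 1.) of Theorem~\ref{thm:equivalence 1}, which gives the weak identity \eqref{weak} for every $\varphi\in\Gamma^u_{t,\beta}$. The natural move is to insert $\varphi(s)\equiv x^\ast$: since $\varphi'\equiv 0$, the two extra terms $L_1=\int_0^t\langle u(s),\varphi'(s)\rangle\,\ds$ and $L_2=-\int_0^t\langle\int_0^s\sigma(u(\tau))\dW_\tau,\varphi'(s)\rangle\,\ds$ vanish identically, and \eqref{weak} collapses precisely to \eqref{weak2} (using the initial condition $u^0=0$ from {\bf Assumption~\ref{assumptions}}(1)). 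The only point needing care is that a constant $x^\ast\in\mathcal{D}_u^\ast$ need not lie in $\Gamma^u_{t,\beta}$, because the singularity estimate in Definition~\ref{def:gamma}(3) requires $(t-s)^{-1+\varepsilon}$ control of $\|((-A_u(s))^{1+\beta})^\ast\varphi(s)\|_{X^\ast}$, which fails for $\beta>0$ with $\varphi$ constant. I would handle this by taking $\beta=0$: then Definition~\ref{def:gamma}(1) only asks $\varphi(s)\in\mathcal{D}_u^\ast$ and $\varphi'(s)=0\in X^\ast$, Definition~\ref{def:gamma}(2) asks $s\mapsto A_u^\ast(s)x^\ast\in C([0,t];X^\ast)$ (which holds by the continuity properties of the evolution family and constancy of the domain, {\bf Assumption~\ref{ass:const_dom}}), and Definition~\ref{def:gamma}(3) reduces to $\|(-A_u(s))^\ast x^\ast\|_{X^\ast}\le C$, valid since $x^\ast\in\mathcal{D}_u^\ast$; alternatively one can simply re-run the computation \eqref{id2.1}--\eqref{id2.2} of the proof of Theorem~\ref{thm:equivalence 1} directly with the constant test function, avoiding the $\Gamma$-space entirely.

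For assertion 2.), I would argue that every weak solution in the sense \eqref{weak2} is also a weak solution in the sense \eqref{weak}, after which part 2.) of Theorem~\ref{thm:equivalence 1} yields the mild formulation \eqref{mild}. The key step here is exactly the content of Remark~\ref{rem:ftc}: starting from \eqref{weak2}, which holds for all $x^\ast\in\mathcal{D}_u^\ast$, I would take a test function $\varphi\in\Gamma^u_{t,\beta}$ (with $\beta\in(0,\delta^\ast)$), apply \eqref{weak2} on $[0,s]$ with $x^\ast=\varphi(s)$ for each $s$, then differentiate in $s$ using the product rule — reversing the two manipulations indicated in Remark~\ref{rem:ftc}, which precisely reintroduce the terms $L_1$ and $L_2$ — and integrate back over $[0,t]$. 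Concretely, from $\langle u(s),\varphi(s)\rangle = -\int_0^s a(u(r);u(r),\varphi(s))\,\dr + \langle\int_0^s\sigma(u(\tau))\dW_\tau,\varphi(s)\rangle$ one differentiates in $s$; the derivative of the left side is $\frac{d}{ds}\langle u(s),\varphi(s)\rangle$, and moving the $\langle u(s),\varphi'(s)\rangle$ and $\langle\int_0^s\sigma\dW,\varphi'(s)\rangle$ contributions to the right and integrating recovers \eqref{weak} exactly. One must check that this differentiation is legitimate — i.e. that $s\mapsto\langle u(s),\varphi(s)\rangle$ is absolutely continuous with the claimed derivative — which follows from the regularity $u\in\mathcal{Z}$, the $C^1$-regularity of $\varphi$ in $\Gamma^u_{t,\beta}$, the continuity of $s\mapsto A_u^\ast(s)\varphi(s)$ (Definition~\ref{def:gamma}(2)), and the well-definedness already established in Lemma~\ref{lem:well-defined}.

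The main obstacle I anticipate is precisely the passage in 2.) from the restricted class of constant test functions to the full time-dependent class $\Gamma^u_{t,\beta}$: one is strengthening the hypothesis, so the argument must show that testing against \emph{constant} functions at a continuum of times $s\in[0,t]$ carries as much information as testing against a \emph{single} time-dependent $\varphi$. This requires justifying the interchange of differentiation/integration in $s$ with the stochastic integral and with the bilinear form $a(u;\cdot,\cdot)$, and in particular verifying that the map $s\mapsto\langle u(s),\varphi(s)\rangle$ is differentiable with the product-rule derivative — this is where all the regularity built into the definition of $\Gamma^u_{t,\beta}$ and the estimates of Lemma~\ref{lem:Adjoint estimates} are actually used. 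Assertion 1.) is comparatively immediate, essentially a one-line specialization once one confirms that constant functions are admissible test functions (via $\beta=0$ or a direct computation). I would therefore present 1.) briefly and spend most of the proof on the differentiation argument underlying 2.).
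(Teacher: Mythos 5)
Your treatment of part 2.) is essentially the paper's argument: the paper also passes from the constant-test-function identity \eqref{weak2} to the time-dependent formulation \eqref{weak} by an integration by parts in time, except that it does so first for tensor-product test functions $\varphi(s)=f(s)\otimes x^{*}$ with $f\in C^{1}([0,t])$ and $x^{*}\in\mathcal{D}_{u}^{*}$, and then extends by linearity, simple functions and density to $\varphi(s)=U^{u}(t,s)^{*}x^{*}$, whereas you differentiate $s\mapsto\langle u(s),\varphi(s)\rangle$ directly for a general $\varphi\in\Gamma^{u}_{t,\beta}$. These routes are interchangeable, but the paper's has two advantages you should note: the only object differentiated in $s$ is the scalar $f$, so one never has to give meaning to $\tfrac{d}{ds}\langle\int_{0}^{s}\sigma(u(\tau))\dW_{\tau},\varphi(s)\rangle$; and the passage from deterministic $x^{*}$ (for which \eqref{weak2} is assumed, with a null set possibly depending on $x^{*}$) to the random, anticipating test function $U^{u}(t,s)^{*}x^{*}$ is handled by an explicit approximation rather than by substituting a random element into \eqref{weak2} pointwise in $s$.

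The genuine gap is in part 1.). You correctly observe that a constant $x^{*}\in\mathcal{D}_{u}^{*}$ violates Definition~\ref{def:gamma} for $\beta>0$, but your proposed fix of taking $\beta=0$ does not work: Theorem~\ref{thm:equivalence 1} is stated and proved only for $\beta\in(0,\delta^{*})$, and its proof genuinely requires $\beta>0$. The absolute convergence of the double integral $\int_{0}^{t}\int_{r}^{t}\langle U^{u}(s,r)A_{u}(r)\int_{0}^{r}\sigma(u(\tau))\dW_{\tau},(-A_{u}(s))^{*}\varphi(s)\rangle\,\ds\,\dr$ rests on rewriting the integrand as $\langle(-A_{u}(s))^{-\lambda}U^{u}(s,r)A_{u}(r)x,((-A_{u}(s))^{1+\lambda})^{*}\varphi(s)\rangle$ with $0<\theta<\lambda\leq\beta$, which trades the non-integrable $(s-r)^{-1}$ singularity of $U^{u}(s,r)A_{u}(r)$ for $(s-r)^{-1+\lambda-\theta}$; for $\beta=0$ no such $\lambda$ exists, and for a constant test function $((-A_{u}(s))^{1+\lambda})^{*}x^{*}$ is not even defined since $x^{*}$ need only lie in $\mathcal{D}_{u}^{*}$. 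Your fallback of rerunning the computation \eqref{id2.1}--\eqref{id2.2} with the constant test function is indeed the paper's strategy, but it is not free of charge: the paper first imposes the extra regularity $\sigma(u(t))\in\mathcal{D}_{u}$ with $t\mapsto A_{u}(t)\sigma(u(t))$ adapted and in $L^{0}(\Omega;L^{p}(0,T;\mathcal{L}_{2}(H,X)))$, so that $A_{u}(r)\int_{0}^{r}\sigma(u(\tau))\dW_{\tau}$ makes sense, the operator $U^{u}(s,r)A_{u}(r)\int_{0}^{r}\sigma(u(\tau))\dW_{\tau}$ is bounded without any $(s-r)^{-1}$ singularity, and the identity $\int_{r}^{t}\langle U^{u}(s,r)A_{u}(r)x,A_{u}(s)^{*}x^{*}\rangle\,\ds=\langle U^{u}(t,r)A_{u}(r)x,x^{*}\rangle-\langle A_{u}(r)x,x^{*}\rangle$ is justified; the general $\sigma$ is then recovered by an approximation argument as in \cite[Theorem 4.9]{PrVe15}. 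Without this two-step structure the manipulations in part 1.) are purely formal, so this part is not the ``one-line specialization'' your proposal describes.
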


\begin{proof}
1.) Let $t \in [0,T]$. We consider only the case $\sigma(u(t)) \in \mathcal{D}_{u}$ such that the process $t \mapsto A_{u}(t)\sigma(u(t))$ is adapted and belongs to $L^{0}(\Omega; L^{p}(0,T;\mathcal{L}_2(H,X)))$. 

For $x^{\ast}\in \mathcal{D}^{\ast}_{u}$, we simply set $\varphi(t):=x^{\ast}$. However, such a test function does not belong to $\Gamma_{t,\beta}^{u}$, since $\varphi \notin \mathcal{D}_{u}^{1+\beta}$. {Though}, the additional spatial regularity of $\sigma$ enables us to perform the same proof as before. From \eqref{id3.3},  we obtain  for the test function $x^\ast$ instead of $A_u(s)^\ast \varphi(s)$
\begin{align*}
    &\int_{0}^{t}\left\langle u(s),x^{*} \right\rangle\ds - \int_{0}^{t}\left\langle \int_{0}^{s}\sigma(u(\tau))\dW_{\tau}, x^{*}\right\rangle\ds 
    \\
    & \qquad 
    = \int_{0}^{t} \int_{r}^{t}\left\langle U^{u}(s, r) A_{u}(r) \int_{0}^{r}\sigma(u(\tau))\dW_{\tau}, x^{*}\right\rangle\ds\,\dr.
\end{align*}
Testing with $(-A_{u}(s))^{*}x^{*}$ instead of $(-A_{u}(s))^{*}\varphi(s)$ entails 
\begin{align*}
      \int_{r}^{t}\left\langle U^{u}(s, r) A_{u}(r) x, A_{u}(s)^{*} x^{*}\right\rangle ~\ds & = \int_{r}^{t}\dfrac{d}{ds}\left\langle U^{u}(s, r) A_{u}(r) x, x^{*}\right\rangle ~\ds\\
    & = \left\langle U^{u}(t, r) A_{u}(r) x, x^{*}\right\rangle-\left\langle A_{u}(r) x, x^{*}\right\rangle.
\end{align*} 
Analogously to the proof of Theorem \ref{thm:equivalence 1}, the result now follows for $\sigma(\cdot)\in \mathcal{D}_u$. The general case follows from a suitable approximation argument for $\sigma(\cdot)$ as in~\cite[Theorem 4.9]{PrVe15}.\\

\noindent 2.) We show now that a solution satisfying \eqref{weak2},  also verifies  \eqref{mild}. To this aim, we fix  $t \in [0,T]$, and let $f \in C^{1}([0,t])$, $x^{*}\in \mathcal{D}_{u}^{*}$. Using the density of $\mathcal{D}_u^{\ast}$ in $X$, it suffices to consider test functions of the form $\varphi(t) = f(t) \otimes x^{*}$.
Using such test functions in \eqref{weak2}, we infer that
\begin{align}
\label{equiv9}
\langle u(t), \varphi(t)\rangle - \left\langle \int_{0}^{t}\sigma(u(s))\dW_s, \varphi(t)\right\rangle = - \int_{0}^{t}\left\langle u(s), (-A_{u}(s))^{*} x^{*}\right\rangle\ds f(t). 
\end{align}
{The} integration by parts formula {results in}
\begin{align*}
& \int_{0}^{t} \int_{0}^{s}\left\langle u(r), \left(-A_{u}(r)\right)^{*} x^{*}\right\rangle \dr f^{\prime}(s)\ds \\
&\qquad =  \int_{0}^{t}\left\langle u(s), (-A_{u}(s))^{*} x^{*}\right\rangle \ds f(t)  - \int_{0}^{t}\left\langle u(s), \left(-A_{u}(s)\right)^{*} \varphi(s)\right\rangle \ds.
\end{align*}
 The above identity and \eqref{weak2} further entail that
\begin{align*}
&\langle u(t), \varphi(t)\rangle - \left\langle \int_{0}^{t}\sigma(u(s))\dW_s, \varphi(t)\right\rangle \\
&\; = - \int_{0}^{t}\left\langle u(s), \left(-A_{u}(s)\right)^{*} \varphi(s)\right\rangle \ds + \int_{0}^{t}\left\langle u(s), x^{*}\right\rangle f^{\prime}(s) \ds
\\
& \qquad 
- \int_{0}^{t}\left\langle \int_{0}^{s}\sigma(u(\tau))\dW_{\tau}, x^{*}\right\rangle f^{\prime}(s) \ds \\
&= - \int_{0}^{t}\left\langle u(s), \left(-A_{u}(s)\right)^{*} \varphi(s)\right\rangle \ds + \int_{0}^{t}\left\langle u(s), \varphi^{\prime}(s)\right\rangle\ds  -\int_{0}^{t}\left\langle \int_{0}^{s}\sigma(u(\tau))\dW_{\tau}, \varphi^{\prime}(s)\right\rangle\ds.
\end{align*}
Recalling $a(u;u,\varphi) = -\langle A_{u} u,\varphi\rangle$, we obtain from the previous expression the weak formulation \eqref{weak}, for test functions having the structure $\varphi(t) = f(t) \otimes x^{*}$.
To extend the previous identity to test functions belonging to $\Gamma_{\beta,t}^{u}$, we firstly extend it to simple functions $\varphi\colon \Omega \to C^{1}([0,t]; X^{*}) \cap C([0,t]; \mathcal{D}_{u}^{*})$. By an approximation argument, this can be further extended to {any function} $\varphi \in L^{0}(\Omega; C^{1}([0,t];X)\cap C([0,t];\mathcal{D}_{u}^{*}))$. Choosing an arbitrary $x^{*}\in \mathcal{D}_{u}^{*}$ and setting $\varphi(s) := U^{u}(t,s)^{*}x^{*} $, by Lemma~\ref{lem.test}, we have that $\varphi(s) \in L^{0}\left(\Omega; C^{1}([0,t]; X) \cap C([0,t]; \mathcal{D}_u^\ast)\right)$. This proves the statement
\end{proof}


\section{Examples}
\label{sec:example}
In this section, we present two examples of parabolic quasilinear SPDEs, to which the theory developed in this paper applies. 
The existence theory for pathwise mild-, martingale- and weak solutions for these problems is well-known, see \cite{KuNe18, DJZ19, HoZh17}. After introducing these SPDEs and recalling the corresponding existence results, we show that they satisfy our assumptions. Therefore, the pathwise mild and weak solution concepts are equivalent in {these cases}.

\begin{example}\textnormal{\textbf{(The stochastic SKT population model)}}
Let $\dom\subset\R^{2}$ be an open bounded domain with $\mathcal{C}^{2}$ boundary.  We fix parameters $\alpha_1, \alpha_2, \delta_{11}, \delta_{21} > 0$. We are interested in studying a cross-diffusion SPDE, which was originally introduced by Shigesada, Kawasaki and Teramato \cite{SKT79} in the deterministic setting, to analyze population segregation by induced cross-diffusion in a two-species model. Note that the nonlinear {drift} term correspond to those arising in the classical Lokta-Volterra competition model. The stochastic SKT system is given by 
\begin{equation}
\label{eq:SKT}
\begin{split}
&\mathrm{d} u_{1}=\left(\Delta\left(\alpha_{1} u_{1}+\gamma_{1} u_{1} u_{2}+\beta_{1} u_{1}^{2}\right)+\delta_{11} u_{1}-\theta_{11} u_{1}^{2}-\theta_{12} u_{1} u_{2}\right) \mathrm{d} t+\sigma_{1}(u_{1}, u_{2}) \mathrm{d} W^{1}_t, \\
&\mathrm{d} u_{2}=\left(\Delta\left(\alpha_{2} u_{2}+\gamma_{2} u_{1} u_{2}+\beta_{2} u_{2}^{2}\right)+\delta_{21} u_{2}-\theta_{21} u_{1} u_{2}-\theta_{22} u_{2}^{2}\right) \mathrm{d} t+\sigma_{2}(u_{1}, u_{2}) \mathrm{d} W^{2}_t,
\end{split}
\end{equation}
for $t \in[0,T]$ and $x \in \dom$ and is supplemented with the following boundary and initial conditions:
\begin{align*}
&\frac{\partial}{\partial n} u_{1}(t,x) = \frac{\partial}{\partial n} u_{2}(t,x) = 0, & t > 0, \; x \in \partial \dom,\\
&u_{1}(x, 0) = u_{1}^0(x) \geq 0,\quad  u_{2}(x, 0) = u_{2}^0(x) \geq 0, &x \in \dom.
\end{align*}
$W = (W^1, W^2)$ is an $H$-valued cylindrical Wiener process. The solution $u:=(u_1,u_2)$, where $u_1=u_1(x, t)$ and $u_2 = u_2(x, t)$
denote the densities of two competing species $S_1$ and $S_2$ at certain location $x \in \dom$, at time $t$.
The coefficients $\theta_{11}, \theta_{22} > 0$ denote the intraspecies competition rates in $S_1$, respectively in $S_2$
and $\theta_{12}, \theta_{21} > 0$ stand for the interspecies competition rates between $S_1$ and $S_2$. Furthermore, the terms $\Delta(\beta_1 u_1^2)$ and $\Delta(\beta_2 u_2^2)$ represent the self-diffusions of $S_1$ and $S_2$ with rates $\beta_1, \beta_2 \ge 0$, and $\Delta(\gamma_1 u_1 u_2)$, $\Delta(\gamma_2 u_1 u_2)$ represent the cross-diffusions of $S_1$ and $S_2$ with rates $\gamma_1, \gamma_2 \ge 0$.
\end{example}
The SKT system \eqref{eq:SKT} can be rewritten as an abstract quasilinear SPDE:
\begin{equation}
\label{eq:SKT_abs}
\begin{cases}
&\mathrm{d} u=[A_{u} u+F(u)]~ \mathrm{d} t+\sigma(u)~ \mathrm{d} W_t, \quad t \in[0, T] \\
&u(0)=u^{0},
\end{cases}
\end{equation}
where 
\[
A_{u}u:=\textnormal{div}(B(u)\nabla u)-\Gamma u,
\]
with 
\[B(u)=\begin{pmatrix}
\alpha_{1}+2\beta_1u_{1} + \gamma_1 u_{2} & \gamma_1 u_{1} \\
\gamma_2 u_{2} & \alpha_{2}+2 \beta_2 u_{2} +\gamma_2 u_{1}
\end{pmatrix}, \qquad 
\Gamma(u)=\begin{pmatrix}
\delta_{11} & 0 \\
0 & \delta_{21}
\end{pmatrix}.\]
The nonlinear term $F$ corresponds to the Lotka-Volterra type competition model
\[
F(u)=\begin{pmatrix}
2\delta_{11} u_{1}-\theta_{11} u_{1}^{2}-\theta_{12} u_{1} u_{2} \\
 2\delta_{21} u_{2}-\theta_{21} u_{1} u_{2}-\theta_{22} u_{2}^{2}
\end{pmatrix}.\]
In order to ensure the positive definiteness of the matrix $B$, the following restriction on the parameters is necessary:
    \begin{align}\label{pos:def:skt}
\gamma_{1}^{2}<8\alpha_{1}\beta_{1}\quad\mbox{and}\quad\gamma_{2}^{2}<8\alpha_{2}\beta_{2}.
\end{align}
This assumption is required in order to show that $A_u$ generates a parabolic evolution system $U^u$ for $u\in Z$, for a natural choice of $Z$, see below.
The (AT) conditions \ref{ass:AT1}--\ref{ass:AT3} are  satisfied for a standard choice of Hilbert spaces $Z\hookrightarrow Y \hookrightarrow X$, where $X:=L^2(\mathcal{O})$, 
 $Z:=H^{1+\varepsilon}(\mathcal{O})$ and $Y:=H^{1+\varepsilon_0}(\mathcal{O})$ for $0<\varepsilon_0<\varepsilon$, see \cite[Chapter~15.2.2]{Ya10}. Moreover, according to \cite[Chapter~1.8.2]{Ya10}, \ref{ass:AT1ad}--\ref{ass:AT3ad} are also satisfied by the adjoint operator $(A_u)^{\ast}$.
 \begin{remark}
\label{rem:space_cho}
The choice of the space $Z=H^{1+\varepsilon}(\dom)$ is natural. Using the Sobolev embedding  $W^{k, p}(\dom) \hookrightarrow C(\bar{\dom})$ for $kp > d$, we observe that for fixed $t\in[0,T]$ and $d=2$, the choice of $Z = H^{1+\eps}(\dom)$ ensures that \eqref{blf:continuity} is satisfied.
\end{remark}
 We also emphasize that the domains of the fractional powers of $A_u$, for $u\in Z$, can be identified with Sobolev spaces, see~\cite[Prop.~ 15.3]{Ya10}.  More precisely, we have
\begin{align*}
   \begin{cases}
\mathcal{D}^{\theta}_u = H^{2\theta}(\dom),~ \mbox{ for } 0\leq\theta<\frac{3}{4}\\
\mathcal{D}^{\theta}_u= H^{2\theta}_N(\dom),~ \mbox{ for } \frac{3}{4}<\theta\leq 1,
\end{cases} 
\end{align*}
where $H^{2\theta}_N(\dom)$ incorporates the Neumann boundary conditions.
 Furthermore, the nonlinear drift term is locally Lipschitz continuous on $X$. Letting $u^0\in Z$ a.s. and assuming a local Lipschitz continuity on $\sigma$ (recall {\bf Assumption~\ref{ass:ex_path_mild}}),~\cite[Theorem ~4.3]{KuNe18} provides the existence of a local-in-time pathwise mild solution $u$ of \eqref{eq:SKT_abs} such that $u \in L^{0}\left(\Omega ; \mathcal{B}\left(\left[0, \tau\right) ; Z\right)\right) \cap$ $L^{0}\left(\Omega ; \mathcal{C}^{\delta}\left(\left[0, \tau\right) ; Y\right)\right)$, where $\delta\in(0,\frac{\varepsilon-\varepsilon_0}{2})$.

The main result establishes the equivalence of this pathwise mild solution with the weak solution. This statement is a direct consequence of Theorem \ref{thm:equivalence 1} and Theorem~\ref{thm:equivalence 2}.  Note that  the results in Theorem~\ref{thm:equivalence 1} and Theorem~\ref{thm:equivalence 2} hold  if an additional drift term $F$ is incorporated.

\begin{theorem}
\label{thm:equi_SKT}
The {local} pathwise mild solution {$(u, \tau)$} of~\eqref{eq:SKT_abs} is also a weak solution. More precisely, for $t \in [0, \tau)$, the following relation holds $\mathbb{P}$-a.s.
\begin{align}
\label{eq:equiSKT}
\langle u(t), x^{\ast}\rangle &= \left\langle u^0,x^{\ast}\right\rangle - \int_{0}^{t}\langle B(u(s)) \nabla u(s) ,\nabla x^\ast\rangle \ds - \int_{0}^{t}\langle \Gamma(u(s)), x^\ast\rangle \ds  \\
& \qquad +  \int_{0}^{t} \left\langle F(u(s)), x^{*}\right\rangle\textnormal{d}s + \left\langle \int_{0}^{t}\sigma(u(s))\dW_{s}, x^{*}\right\rangle, \nonumber
\end{align}
where $x^\ast \in \mathcal{D}^\ast_u$.
\end{theorem}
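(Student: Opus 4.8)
The plan is to reduce Theorem~\ref{thm:equi_SKT} to the abstract equivalence already established in Theorem~\ref{thm:equivalence 2}, after checking that the stochastic SKT system, written in the abstract form \eqref{eq:SKT_abs}, fits the hypotheses of that theorem. Concretely, I would first verify \textbf{Assumption~\ref{assumptions}}: the choice $X = L^2(\dom)$, $Y = H^{1+\eps_0}(\dom)$, $Z = H^{1+\eps}(\dom)$ together with the positive-definiteness condition \eqref{pos:def:skt} guarantees, via the coercivity and continuity estimates \eqref{blf:coercivity}--\eqref{blf:continuity} for the bilinear form $a(u;\cdot,\cdot)$ associated to $A_u v = \diver(B(u)\nabla v) - \Gamma v$, that \ref{ass:AT1}--\ref{ass:AT3} hold pathwise for $u \in \mathcal{Z}$; the corresponding statements \ref{ass:AT1ad}--\ref{ass:AT3ad} for the adjoint follow from \cite[Chapter~1.8.2]{Ya10}. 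The constant-domain assumption \textbf{\ref{ass:const_dom}} and the regularity of $\sigma$ in \textbf{Assumption~\ref{assumptions}}(3) are assumed as in the setup of the example. This places us exactly in the framework of Theorem~\ref{thm:equivalence 2}.

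Next I would account for the presence of the additional lower-order drift term $F(u)$, which appears in \eqref{eq:SKT_abs} but not in the abstract equation \eqref{eq:3.Qspde}. Since $F$ is locally Lipschitz on $X$ and the pathwise mild solution lives in $L^0(\Omega;\mathcal{B}([0,\tau);Z)) \cap L^0(\Omega;\mathcal{C}^\delta([0,\tau);Y))$, the map $s \mapsto F(u(s))$ is continuous and $X$-valued on $[0,\tau)$, hence Bochner-integrable on compact subintervals; it therefore contributes an extra deterministic convolution term $\int_0^t U^u(t,s) F(u(s))\,\ds$ to the mild formulation and an extra term $\int_0^t \langle F(u(s)),\varphi(s)\rangle\,\ds$ to the weak formulation. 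Adapting the computation in the proof of Theorem~\ref{thm:equivalence 2}: one tests the mild formula against $x^\ast = (-A_u(s))^\ast \varphi(s)$, applies the fundamental-theorem-of-calculus identity \eqref{identity} and Fubini, exactly as before; the $F$-term goes through by the same product-rule manipulation used for the $\sigma$-term but without any stochastic subtlety, since $F(u)$ is adapted and pathwise continuous. This shows the pathwise mild solution of \eqref{eq:SKT_abs} satisfies the weak formulation with the $F$-term included.

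Then I would unwind the abstract pairing $\langle A_u u, x^\ast\rangle$ into the concrete SKT expression. By the definition of the bilinear form, $\langle -A_u u, x^\ast\rangle = a(u;u,x^\ast) = \langle B(u)\nabla u, \nabla x^\ast\rangle + \langle \Gamma(u), x^\ast\rangle$ after an integration by parts that uses the no-flux boundary condition $B(u)\nabla u\cdot\nu = 0$ on $\pa\dom$ (which is built into the domain of $A_u$, i.e.\ into $\mathcal{D}_u$, for the Neumann realization). Substituting this and the extra $F$-term into the weak identity \eqref{weak2} for time-independent test functions $x^\ast \in \mathcal{D}_u^\ast$, and restoring a general nonzero initial datum $u^0 \in Z$ (the reduction to $u^0 = 0$ in \textbf{Assumption~\ref{assumptions}}(1) being only a notational simplification), yields precisely \eqref{eq:equiSKT}. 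I would also invoke Remark~\ref{rem:space_cho} to confirm that the choice $Z = H^{1+\eps}(\dom)$ in dimension $d=2$ makes the bilinear form continuous, which is what licenses all of the above.

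The main obstacle, and the one place where genuine care is needed, is the verification that the SKT operator $A_u$ and its adjoint satisfy the (AT) conditions \emph{uniformly along the pathwise mild solution}, together with the constant-domain property: one must know that for $u(\cdot,\omega) \in C^\delta([0,\tau);Y)$ the Hölder-in-time condition \ref{ass:AT3} holds with exponent $\delta$ on $[0,\tau)$, so that Theorem~\ref{thm:semigp} and Lemmas~\ref{lem:Semigroup estimates}--\ref{lem:fun_cal} are applicable, and that the identification $\mathcal{D}_u^\theta = H^{2\theta}_N(\dom)$ for $\theta$ near $1$ is independent of $u$. All of this is available from \cite[Chapter~15.2.2, Prop.~15.3]{Ya10} and \cite[Theorem~4.3]{KuNe18}, so once those are cited the remainder is a bookkeeping exercise of matching abstract terms to concrete ones; there is no new analytic difficulty.
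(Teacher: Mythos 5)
Your proposal is correct and follows essentially the same route as the paper: the paper's proof simply invokes Theorem~\ref{thm:equivalence 1} part (1) and then Theorem~\ref{thm:equivalence 2} part (1), relying on the verification of the (AT) conditions, the identification of the fractional power domains, and the remark that the drift term $F$ can be incorporated, all of which are stated in the text preceding the theorem. You spell out these verifications and the handling of the $F$-term in more detail than the paper does, but the underlying argument is the same reduction to the abstract equivalence results.
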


\begin{proof}[Proof]
 By Theorem~\ref{thm:equivalence 1} part (1), we infer that the pathwise mild solution $u=(u_1,u_2)$ satisfies the weak formulation {for $t \in [0,\tau)$}
\begin{align}
\label{eq:equiSKT1}
\langle u(t), \varphi(t)\rangle & = \left\langle u^0,\varphi(0)\right\rangle - \int_{0}^{t}\langle B(u(s)) \nabla u(s) ,\nabla \varphi(s) \rangle \ds - \int_{0}^{t}\langle \Gamma(u(s)), \varphi(s)\rangle \ds \\ 
 &\quad + \int_{0}^{t}\left\langle u(s), \varphi^{\prime}(s)\right\rangle \ds - \int_{0}^{t}\left\langle \int_{0}^{s}\sigma(u(\tau))\dW_{\tau}, \varphi^{\prime}(s)\right\rangle \ds \nonumber \\ 
 & \quad + \int_{0}^{t} \left\langle F(u(s)), {\varphi(s)}\right\rangle\textnormal{d}s +\left\langle \int_{0}^{t}\sigma(u(s))\dW_{s}, \varphi(t)\right\rangle,\nonumber 
\end{align}
for every time-dependent test function $\varphi\in L^{0}\left(\Omega ; C^{1}\left([0, t]; \mathcal{D}_{u}^{*})\right)\right)$. Now,  Theorem \ref{thm:equivalence 2} part (1) entails the usual weak formulation \eqref{eq:equiSKT}.  
\end{proof}

\begin{remark}
Theorem~\ref{thm:equi_SKT} also provides a  regularity result for the weak solution, i.e. $u\in L^0(\Omega; C^{\delta}([0,\tau);Y))$.
\end{remark}

\begin{example}
We let $d\geq 1$ and consider a quasilinear parabolic stochastic partial differential equation on a $d$-dimensional domain $\dom\subset\mathbb{R}^d$ with smooth boundary $\partial\dom$ of the form
\begin{equation}\label{eq:SPDEHof}
    \begin{cases}
    &\mathrm{d} u=\operatorname{div}(B(u) \nabla u) \mathrm{d} t+\sigma(u) \textnormal{d} W_t, \quad \mbox{ in }  (0,T)\times\dom\\
   & u= 0 \quad \quad\quad \hspace*{42 mm} \mbox{ on } [0,T]\times\partial\dom\\
    &u(0,x) = u^0(x), \qquad \hspace*{29 mm}x \in \dom, 
    \end{cases}
\end{equation}
 where $(W_t)_{t\in[0,T]}$ is a cylindrical Wiener process taking values in a Hilbert space $H \supset X: = L^{2}\left(\dom\right)$.
\end{example}

Such equations have been extensively studied in the literature, see \cite{DMH15, DHV16, HoZh17}, under the following assumptions on the coefficients $B$ and $\sigma$:
\begin{assumption}
\label{ass:SPDEHof}
\begin{itemize}
    \item[\textnormal{(1)}] The coefficients $B: \mathbb{R} \rightarrow \mathbb{R}^{d \times d}$ are nonlinear functions, such that the diffusion matrix $B = \left(B_{i j}\right)_{i, j=1}^{d}$ is of class $C_{b}^{1}$, symmetric, uniformly positive definite and bounded, i.e. there exist constants $\kappa, C >0$ such that 
    \begin{equation}
    \label{eq:bdd_diff_mat}
    \kappa \mathrm{I} \leq B \leq C \mathrm{I}.
    \end{equation}
    \item[\textnormal{(2)}] For each $u \in X$ we consider a mapping $\sigma(u): H \rightarrow X$ defined by 
    \[\sigma(u) {e}_{k}=\sigma_{k}(\cdot,u(\cdot)),\]
    where $\sigma_{k}\in C(\dom\times\mathbb{R})$. We further suppose that $\sigma$ satisfies  usual Lipschitz and linear growth conditions, i.e.
\[
\begin{split}
&\sum_{k \in \N}\left|\sigma_{k}\left(x,\xi_{1}\right)-\sigma_{k}\left(x,\xi_{2}\right)\right|^{2} \leq C\left|\xi_{1} - \xi_{2}\right|^{2}, \qquad \forall\, x\in\dom,  \xi_1,\, \xi_2 \in \R,\\
&\sum_{k \in \N}\left|\sigma_{k}(x,\xi)\right|^{2} \leq C\left(1+|\xi|^{2}\right), \qquad \qquad \qquad \quad \;\forall\,x\in\dom, \xi \in \mathbb{R}.
\end{split}
\]
\end{itemize}
\end{assumption}

In particular, these  assumptions imply that $\sigma$ maps $X$ to $\mathcal{L}_{2}(H, X)$. Thus, given a predictable process $u$ that belongs to $L^{2}\left(\Omega, L^{2}(0, T ; X)\right)$, the stochastic integral is a well-defined $X$-valued process.

The previous assumptions on $\sigma$ can be relaxed and an additional regular drift term can be incorporated~\cite{DMH15, DHV16, HoZh17}. An example of such a drift term is given by
 $\diver(F(u))$ , where \[F=\left(F_{1}, \ldots, F_{d}\right): \mathbb{R} \longrightarrow \mathbb{R}^{d}\]
is continuously differentiable with bounded derivatives. 
{
\begin{remark}
In contrast to Example \ref{eq:SKT}, the diffusion matrix $B$ additionally satisfies the boundedness assumption  \eqref{eq:bdd_diff_mat}.
\end{remark}
}
Next, we give the definition of a weak solution of \eqref{eq:SPDEHof}.

\begin{definition}
An $\left(\mathcal{F}_{t} \right)_{t\in[0,T]}$-adapted, $X$-valued continuous process $(u(t))_{t\in[0,T]}$ is called a weak solution for \eqref{eq:SPDEHof} if for any $\varphi \in C^{\infty}_0\left(\dom\right)$, the following identity holds for $t\in[0,T]$, $\mathbb{P}$-a.s.
      \begin{align}
    &\langle u(t), \varphi\rangle  =  \left\langle u^{0}, \varphi\right\rangle- \int_{0}^{t}\langle B(u(s)) \nabla u(s), \nabla \varphi\rangle\,\ds + \int_{0}^{t}\langle\sigma(u(s))\,\dW_s, \varphi\rangle . \nonumber
    \end{align}
\end{definition}

Under the previous assumptions on $B$ and $\sigma$, together with suitable regularity conditions on the initial data, the existence of a weak solution was established in~\cite{DMH15,DHV16}.

Moreover, assuming higher spatial regularity on $\sigma$, the regularity of this weak solution can be improved~\cite[Theorem~2.6]{DMH15}. For the convenience of the reader we indicate this statement. To this aim we let $\eta>0$, set $D_T:=[0,T]\times\dom $ and consider the H\"older space $C^{\eta/2,\eta}(D_T)$ with different time and space regularity, endowed with the norm
\begin{align*}
 ||f||_{C^{\eta/2,\eta}(D_T)}=\sup\limits_{(t,x)\in D_T} |f(t,x)| + \sup\limits_{(t,x)\neq (s,y)\in D_T} \frac{|f(t,x)-f(s,y)|}{\max\{|t-s|^{\eta/2} +|x-y|^{\eta}\} }.
\end{align*}

\begin{theorem}\label{thm:hofregularity}
Assume that
\begin{itemize}
    \item $u^{0} \in L^{m}\left(\Omega ; C^{\iota}(\bar{\dom})\right)$ for some $\iota>0$ and all $m \in[2, \infty),$ and $u^{0}=0$ on $\partial \dom$ a.s.
    \item $\|\sigma(u)\|_{\mathcal{L}_{2}\left(H, H_{0}^{1}(\dom)\right)} \leq C\left(1+\|u\|_{H_{0}^{1}(\dom)}\right)$. 
\end{itemize}
Then the weak solution $u$ of~\eqref{eq:SPDEHof}   belongs to $L^{m}\left(\Omega ; C^{\eta / 2, \eta}(\overline{D_{T}})\right)$, for all $m\in[2,\infty)$. 
\end{theorem}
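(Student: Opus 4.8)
The plan is to regard \eqref{eq:SPDEHof}, for each fixed $\omega$, as a linear divergence-form parabolic equation with bounded, measurable, uniformly elliptic coefficients $a(\omega;t,x):=B(u(\omega;t,x))$ and a random forcing coming from the noise, and then to combine stochastic a priori bounds with classical deterministic parabolic (De Giorgi--Nash--Moser) regularity. \emph{Step 1 (higher integrability and an $L^\infty$-bound).} Starting from the natural regularity of a weak solution, $u\in L^2(\Omega;C([0,T];X)\cap L^2(0,T;H_0^1(\dom)))$, I would apply It\^o's formula to $\|u(t)\|_{L^{2p}(\dom)}^{2p}$, use the ellipticity $\kappa\mathrm{I}\le B$ from \eqref{eq:bdd_diff_mat}, the linear growth of $\sigma$, and the Burkholder--Davis--Gundy and Gronwall inequalities to obtain $u\in L^m(\Omega;L^\infty(0,T;L^q(\dom)))$ for every $m\in[2,\infty)$ and $q<\infty$. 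A Moser-type iteration in the exponent, using the parabolic embedding $L^\infty_tL^2_x\cap L^2_tH^1_x\hookrightarrow L^{2(d+2)/d}(D_T)$ and the structure of the equation, then upgrades this to $u\in L^m(\Omega;L^\infty(D_T))$ for all $m$; here the boundedness in \eqref{eq:bdd_diff_mat} and the linear growth of $\sigma$ are essential.

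\emph{Step 2 (splitting off the noise).} Once $u$ is bounded, $a(\omega;t,x)$ is bounded, measurable and uniformly elliptic with constants independent of $\omega$. Write $u=v+z$, where $z$ solves the linear stochastic heat equation $\rd z=\Delta z\,\dt+\sigma(u)\,\dW_t$, $z(0)=0$, so that, for $\Prob$-a.e.\ $\omega$, $v:=u-z$ solves the deterministic parabolic problem $\pa_t v=\diver(a\,\na v)+\diver((a-\mathrm{I})\na z)$, $v(0)=u^0$. The hypothesis $\|\sigma(u)\|_{\mathcal{L}_2(H,H_0^1)}\le C(1+\|u\|_{H_0^1})$ combined with Step~1 gives $\sigma(u)\in L^m(\Omega;L^p(0,T;\mathcal{L}_2(H,H_0^1(\dom))))$ for all $m,p<\infty$; then stochastic maximal $L^p$-regularity for the heat semigroup (or the factorization method together with Propositions~\ref{prop:reg_stoc_int_1}--\ref{prop:reg_stoc_int_2}) yields $z\in L^m(\Omega;C^{\eta/2,\eta}(\overline{D_T}))$ and $\na z\in L^m(\Omega;L^q(D_T))$ with $q$ as large as we please.

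\emph{Step 3 (deterministic regularity for the remainder).} For $\Prob$-a.e.\ $\omega$ the equation for $v$ is a divergence-form parabolic equation with bounded, measurable, uniformly elliptic coefficients, H\"older initial datum $u^0(\omega)\in C^\iota(\bar{\dom})$ vanishing on $\pa\dom$, and right-hand side $\diver f$ with $f=(a-\mathrm{I})\na z\in L^q(D_T)$, $q>d+2$. The De Giorgi--Nash--Moser / Ladyzhenskaya--Solonnikov--Ural'tseva estimate then gives $v(\omega)\in C^{\eta/2,\eta}(\overline{D_T})$ with a bound of the form $C\bigl(\|u^0(\omega)\|_{C^\iota(\bar{\dom})}+\|f(\omega)\|_{L^q(D_T)}+\|u(\omega)\|_{L^\infty(D_T)}\bigr)$, the constant depending only on the ellipticity constants, $d$, $T$ and $\dom$. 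Taking $L^m(\Omega)$-norms and invoking Steps~1 and~2 gives $v\in L^m(\Omega;C^{\eta/2,\eta}(\overline{D_T}))$, whence $u=v+z\in L^m(\Omega;C^{\eta/2,\eta}(\overline{D_T}))$ for every $m\in[2,\infty)$.

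The main obstacle is the interplay of exponents between Steps~1 and~2: one has to ensure that the stochastic convolution $z$ genuinely gains enough spatial integrability of $\na z$ (and enough joint H\"older regularity) to feed the deterministic divergence-form estimate, and it is precisely here that the stronger assumption $\|\sigma(u)\|_{\mathcal{L}_2(H,H_0^1)}\le C(1+\|u\|_{H_0^1})$ is used in an essential way; balancing the order of the fractional power in the factorization against $p$ and the dimension $d$ is the delicate point, as is the stochastic Moser iteration underlying the $L^\infty$-bound. All remaining ingredients are routine stochastic a priori estimates and classical parabolic theory; the complete argument is carried out in \cite[Theorem~2.6]{DMH15}.
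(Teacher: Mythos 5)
The paper does not actually prove this theorem: it is quoted verbatim from \cite[Theorem~2.6]{DMH15} ``for the convenience of the reader,'' so there is no in-paper argument to compare against. Your outline --- moment/$L^\infty$ a priori bounds, splitting off the stochastic convolution $z$, and De Giorgi--Nash--Moser for the remaining divergence-form equation with datum $u^0\in C^\iota(\bar{\dom})$ and right-hand side $\diver\bigl((a-\mathrm{I})\na z\bigr)$ --- is consistent with the strategy of that reference and correctly identifies where each hypothesis enters. The one step you should not gloss over is the claim that $\sigma(u)\in L^m\bigl(\Omega;L^p(0,T;\mathcal{L}_2(H,H_0^1(\dom)))\bigr)$ for arbitrarily large $p$: the energy estimate together with the hypothesis on $\sigma$ only gives this for $p=2$, so the asserted $L^q(D_T)$-integrability of $\na z$ for $q$ ``as large as we please'' requires a genuine bootstrap (this is precisely the exponent-balancing you flag and which is carried out in \cite{DMH15}); as written, Step~2 assumes its conclusion rather than deriving it.
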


Now, we are ready to state the main result for this {example}, based on Theorem \ref{thm:hofregularity}.

\begin{theorem}
  {Let assumptions of Theorem~\ref{thm:hofregularity} and \textbf{Assumption~\ref{ass:SPDEHof}} hold. Then, there exists a pathwise mild solution of \eqref{eq:SPDEHof}. More precisely, there exists an}  $(\mathcal{F}_t)_{t\in[0,T]}$-adapted process $u$ such that for $t \in[0,T]$, $\mathbb{P}$-a.s.
 \begin{equation}
     \label{eq:hofequi}
     u(t) = U^u(t,0) u^0 + U^u(t,0) \int_0^t \sigma(u(s))\dW_s - \int_0^t U^u(t,s) A_u(s)\int_s^t \sigma(u(\tau))\dW_\tau\,\ds,
 \end{equation}
 where $A_u u := \diver(B(u) \nabla u)$ and $U^u(\cdot,\cdot)$ is the evolution family generated by $A_u$. Moreover, $u \in L^0(\Omega; \mathcal{B}([0,T]; H^1(\dom))) \cap L^0(\Omega; C^\delta([0,T]; L^2(\dom)))$.
\end{theorem}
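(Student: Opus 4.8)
The plan is not to build a solution from scratch but to start from the weak solution $u$ of~\eqref{eq:SPDEHof} provided by \cite{DMH15,DHV16}, to upgrade its regularity by Theorem~\ref{thm:hofregularity}, and then to invoke Theorem~\ref{thm:equivalence 2}, part~2.), which at once gives that $u$ satisfies the pathwise mild formulation~\eqref{mild}, that is,~\eqref{eq:hofequi}. Thus the whole proof reduces to checking that, with $X:=L^2(\dom)$, the operator $A_u v:=\diver(B(u)\na v)$ under homogeneous Dirichlet boundary conditions, the process $u$, and the coefficient $\sigma$ satisfy {\bf Assumption~\ref{assumptions}}.

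First I would fix the functional-analytic framework: $V:=H^1_0(\dom)$ for the form, $\DD_u=\DD_u^\ast:=H^2(\dom)\cap H^1_0(\dom)$ for the (common) domain, $Z:=H^1_0(\dom)$ and $Y:=H^\theta(\dom)$ for a fixed $\theta\in(0,1)$, so that $Z\hookrightarrow Y\hookrightarrow X$. The (AT) conditions~\ref{ass:AT1}--\ref{ass:AT3} and the adjoint conditions~\ref{ass:AT1ad}--\ref{ass:AT3ad} are verified through the bilinear form $a(u(t);w_1,w_2)=\langle B(u(t))\na w_1,\na w_2\rangle_X$, cf.~\eqref{eq:sesquilinear}: coercivity~\eqref{blf:coercivity} follows from the uniform positive definiteness $\kappa\mathrm{I}\le B$ in~\eqref{eq:bdd_diff_mat} together with the Poincar\'e inequality, continuity~\eqref{blf:continuity} from $B\le C\mathrm{I}$, and the symmetry of $B$ makes $A_u(t)$ self-adjoint so that the adjoint conditions reduce to the ones already checked. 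The constant-domain (Kato--Tanabe) assumption {\bf~\ref{ass:const_dom}}, i.e.\ $\DD_u=H^2\cap H^1_0$ for every $t$, is standard elliptic regularity: $\pa\dom\in C^2$ and, by Theorem~\ref{thm:hofregularity}, $B(u(t))$ has continuous (indeed H\"older) entries in $x$ for each $t$. The key point is the time-H\"older condition~\ref{ass:AT3}: since $B\in C^1_b$,
\[
\bigl|a(u(s);w_1,w_2)-a(u(t);w_1,w_2)\bigr|\le\|B(u(s))-B(u(t))\|_{L^\infty(\dom)}\,\|w_1\|_V\|w_2\|_V\le C\,|t-s|^{\eta/2}\,\|w_1\|_V\|w_2\|_V,
\]
where the last bound uses the $(\eta/2)$-H\"older continuity in time of $u$ with values in $C(\bar\dom)$ granted by Theorem~\ref{thm:hofregularity}; this form-H\"older continuity, with constant form domain $V$ and the sectoriality above, is exactly the hypothesis under which the deterministic theory of nonautonomous forms (see \cite[Ch.~2]{Ya10} and Theorem~\ref{thm:semigp}) generates the evolution family $U^u$, pathwise in $\omega$. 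Finally $u\in\mathcal{Z}$: the hypothesis $\|\sigma(u)\|_{\mathcal{L}_2(H,H^1_0)}\le C(1+\|u\|_{H^1_0})$ of Theorem~\ref{thm:hofregularity} and the usual energy estimate give $u\in L^\infty(0,T;H^1_0)=L^\infty(0,T;Z)$ $\Prob$-a.s., while interpolating this bound against the $(\eta/2)$-H\"older continuity of $u$ in $X$ yields $u\in C^\mu([0,T];Y)$ with $\mu=\tfrac{\eta}{2}(1-\theta)>0$.

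Next I would dispose of the remaining items. The growth condition in {\bf Assumption~\ref{ass:SPDEHof}}(2) gives $\|\sigma(v)\|_{\mathcal{L}_2(H,X)}^2\le C(|\dom|+\|v\|_X^2)$, so, $u$ being continuous in time with values in $X$, $\sigma(u)\in L^\infty(0,T;\mathcal{L}_2(H,X))\subset L^p(0,T;\mathcal{L}_2(H,X))$ $\Prob$-a.s.\ for every $p\in(2,\infty)$; together with the obvious strong measurability and adaptedness this is {\bf Assumption~\ref{assumptions}}(3). The defining identity of the weak solution of~\eqref{eq:SPDEHof} holds for $\varphi\in C^\infty_0(\dom)$; each of its terms is continuous in $\varphi$ with respect to the $H^1$- (or $L^2$-) topology, and $C^\infty_0(\dom)$ is dense in $\DD_u^\ast=H^2\cap H^1_0$, so the identity extends to every $x^\ast\in\DD_u^\ast$ and is precisely~\eqref{weak2}. (The normalization $u^0=0$ in {\bf Assumption~\ref{assumptions}}(1) is only for notational convenience: the extra term $\langle u^0,x^\ast\rangle$ is linear and is carried unchanged through the proof of Theorem~\ref{thm:equivalence 2}, producing the term $U^u(t,0)u^0$ in~\eqref{eq:hofequi}; the Lipschitz conditions~\ref{ass:AT4},~\ref{ass:AT4ad} play no role in the equivalence itself, only in the existence theory of~\cite{KuNe18}.) Theorem~\ref{thm:equivalence 2}, part~2.), then yields that $u$ satisfies~\eqref{mild}, i.e.~\eqref{eq:hofequi}. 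The regularity statement follows from Theorem~\ref{thm:hofregularity}: $u\in C^{\eta/2}([0,T];C(\bar\dom))\hookrightarrow C^\delta([0,T];L^2(\dom))$ for $\delta\in(0,\eta/2]$, and $u\in L^\infty(0,T;H^1(\dom))\subset\mathcal{B}([0,T];H^1(\dom))$ from the energy estimate, both $\Prob$-a.s.

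The main obstacle is the verification of the time-H\"older (AT) condition~\ref{ass:AT3} (and~\ref{ass:AT3ad}) and the constant-domain assumption {\bf~\ref{ass:const_dom}} for $A_u$ from the comparatively weak regularity of $u$ supplied by Theorem~\ref{thm:hofregularity}: a direct estimate of $\|A_u(t)(A_u(t)^{-1}-A_u(s)^{-1})\|_{\mathcal{L}(X)}$ would require control of spatial derivatives of $B(u(s))-B(u(t))$, hence more smoothness of $u$ than is available. The resolution — and the point that must be argued carefully — is that the abstract machinery of Section~\ref{sec:prelim} only needs the \emph{form} version of the H\"older condition, namely H\"older continuity in time of $s\mapsto a(u(s);\cdot,\cdot)$ on $V$ with constant form domain, for which the boundedness of $B$ together with the time-H\"older continuity of $u$ in $C(\bar\dom)$ is enough, while the identification $\DD_u=H^2\cap H^1_0$ needs only the (spatial) continuity of $B(u(t))$.
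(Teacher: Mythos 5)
Your proposal is correct and follows essentially the same route as the paper's proof: one verifies \textbf{Assumption~\ref{assumptions}} for $A_u v=\diver(B(u)\nabla v)$ (coercivity and continuity of the form from \eqref{eq:bdd_diff_mat}, the (AT) and adjoint conditions, and the time-H\"older regularity of $u$ supplied by Theorem~\ref{thm:hofregularity}) and then invokes Theorem~\ref{thm:equivalence 2}, part 2.), to pass from the weak solution of \cite{DMH15,DHV16} to the pathwise mild formulation \eqref{eq:hofequi}. The only noteworthy (and harmless) difference is that the paper takes $Y=L^2(\dom)$ and obtains \ref{ass:AT3} via \ref{ass:AT4} combined with $u\in C^{\eta/2}([0,T];L^2(\dom))$, whereas you verify the H\"older condition directly at the level of the bilinear form using $\|B(u(s))-B(u(t))\|_{L^\infty(\dom)}\lesssim |t-s|^{\eta/2}$, which is a slightly more careful justification of the same step.
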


\begin{proof}
 Under \textbf{Assumption~\ref{ass:SPDEHof}}, the existence of a weak solution $u$
 \[u \in  L^{2}\left(\Omega ; C\left([0, T] ; L^{2}(\dom)\right) \cap L^{2}\left(0, T ; H^1(\dom)\right)\right)\]
 was established in~\cite{DMH15,DHV16}. The bilinear form $a$ for $u \in H^1(\dom)$ given by
 \[a(u; \mathrm{v}, \mathrm{w}):= \langle B(u) \nabla \mathrm{v}, \nabla \mathrm{w}\rangle, \qquad \mathrm{v}, \mathrm{w} \in H^1(\dom)\]
 satisfy conditions \eqref{blf:coercivity} and \eqref{blf:continuity} with $Z := H^{1}(\dom)$, $X = Y := L^{2}(\dom)$, thanks to \eqref{eq:bdd_diff_mat}. Therefore, \ref{ass:AT1} and \ref{ass:AT2} from \textbf{Assumption~\ref{ass:AT}} hold. The Lipschitz assumption on $B$ entails that \ref{ass:AT3} holds too. Similarly, it can be shown that the conditions \ref{ass:AT1ad}--\ref{ass:AT3ad} of \textbf{Assumption~\ref{ass:ATadjoint}} are satisfied by the adjoint $A^\ast_u$, see \cite[Chapter~1.8.2]{Ya10} for details. We are left to verify \ref{ass:AT4}. To this aim, we require to show that $u\in C^{\delta}([0,T];L^2(\mathcal{O}))$ for some $\delta > 0$. The weak solution $u \in L^{m}\left(\Omega ; C^{\eta / 2, \eta}(\overline{D_{T}})\right)$, for all $m\in[2,\infty)$, by Theorem~\ref{thm:hofregularity}. Regarding this along with the following equivalent norm \cite{Kr96} on $C^{\eta/2,\eta}(D_{T})$ 
\[
\|u\|_{\eta / 2, \eta, D_T} = \sup _{(t, x),(s, x) \in D_{T}} \frac{|u(t, x)-u(s, x)|}{|t-s|^{\eta / 2}}+\underset{(t, x),(t, y) \in D_{T}}{\sup } \frac{|u(t, x)-u(t, y)|}{|x-y|^{\eta}}
\] 
and boundedness of the domain $\dom$, we conclude that $u \in C^\delta([0,T]; L^2(\dom))$ for $\delta = \eta/2$.

\noindent Hence, we can infer from Theorem~\ref{thm:semigp} that the operator $A_u u = \diver(B(u) \nabla u)$ generates an evolution family $U^u$. According to Theorem~\ref{thm:equivalence 2} part (2), this {evolution family along with the weak solution $u$} satisfies \eqref{eq:hofequi}. Consequently, we obtain that $u$ is a pathwise mild solution of \eqref{eq:SPDEHof}.
\end{proof}

{\begin{remark}\label{rem.sigma_holder}
Note that the  regularity on $\sigma$ assumed in Theorem~\ref{thm:hofregularity} is necessary in order to obtain the H\"older regularity of the solution.
\end{remark}
}

\appendix
\section{Fractional powers of sectorial operators}
\label{sec:frac_pow_dom}
Let $X$ be a Banach space with norm $\|\cdot\|_X$ and $A$ be a linear sectorial operator of $X$ with angle $ 0 \le \vartheta_A < \pi$. As before an open sectorial domain $\Sigma_\vartheta$ for $\vartheta_A < \vartheta < \pi$ is given by
\[\Sigma_\vartheta = \left\{\lambda \in \mathbb{C};\, |\operatorname{arg} \lambda| < \vartheta\right\}, \quad \vartheta_A < \vartheta < \pi.\]
We define, for each complex number $z$ with $\operatorname{Re}z >0,$ the bounded linear operator
$$
A^{-z}=\frac{1}{2 \pi i} \int_{\Gamma} \lambda^{-z}(\lambda-A)^{-1} \textnormal{d} \lambda,
$$
using the Dunford integral in $\mathcal{L}(X)$, where $\Gamma$ is the contour surrounding the spectrum $\sigma(A)$, running counterclockwise in $\mathbb{C}\setminus(\infty, 0] \cap \rho(A)$. If $z=n\in \mathbb{N}$ it can be shown, \cite[Chapter~2.7.1]{Ya10}, that this definition coincides with the standard definition of {$A^{-n} = \left(A^n\right)^{-1}$}. 

$A^{-z}$ is an analytic function for $\operatorname{Re} z > 0$ with values in $\mathcal{L}(X)$. The following theorem \cite[Theorem 2.21]{Ya10} is concerned with the convergence of $A^{-z}$ as $z \to 0$.

\begin{theorem}
For any 0 $<\phi<\frac{\pi}{2},$ as $z \rightarrow 0$ with $z \in \overline{\Sigma_{\vartheta}}\setminus\{0\}$, $A^{-z}$ converges to $\mathrm{Id}$ strongly on $X$.
\end{theorem}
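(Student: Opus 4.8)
The plan is to deduce the statement from two facts: a uniform bound on the family $\{A^{-z}\}$ as $z\to 0$ inside the sector, and strong convergence $A^{-z}y\to y$ on a dense subspace, which combine via a standard density argument. Since $A$ is sectorial with dense domain (which is the case in every situation relevant to this paper), $D(A)$ is dense in $X$. So, once I have shown that
\[
C_\phi:=\sup\bigl\{\|A^{-z}\|_{\mathcal{L}(X)}\,:\,z\in\overline{\Sigma_\phi},\ 0<|z|\le \tfrac12\bigr\}<\infty
\]
and that $A^{-z}y\to y$ for every $y\in D(A)$, I conclude as follows: given $x\in X$ and $\varepsilon>0$, pick $y\in D(A)$ with $\|x-y\|_X<\varepsilon/(C_\phi+1)$; then $\|A^{-z}x-x\|_X\le\|A^{-z}(x-y)\|_X+\|A^{-z}y-y\|_X+\|y-x\|_X\le\varepsilon+\|A^{-z}y-y\|_X$, and letting $z\to 0$ inside the sector finishes the argument.

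For the quantitative work I would deform the Dunford contour $\Gamma$ onto the negative real axis, which turns the defining integral, for $0<\operatorname{Re}z<1$, into the Balakrishnan representation
\[
A^{-z}=\frac{\sin\pi z}{\pi}\int_0^\infty t^{-z}(t+A)^{-1}\,\dt .
\]
Using the sectorial resolvent estimate $\|(t+A)^{-1}\|_{\mathcal{L}(X)}\le M/(1+t)$ for $t\ge0$ and splitting the integral at $t=1$,
\[
\|A^{-z}\|_{\mathcal{L}(X)}\le\frac{|\sin\pi z|}{\pi}\,M\left(\int_0^1 t^{-\operatorname{Re}z}\,\dt+\int_1^\infty t^{-\operatorname{Re}z-1}\,\dt\right)=\frac{|\sin\pi z|}{\pi}\,M\left(\frac{1}{1-\operatorname{Re}z}+\frac{1}{\operatorname{Re}z}\right).
\]
The only potentially dangerous factor as $z\to0$ is $1/\operatorname{Re}z$; but inside a sector of half-angle $\phi<\pi/2$ one has $\operatorname{Re}z\ge|z|\cos\phi$, while $|\sin\pi z|\le\pi|z|\cosh(\pi|z|)$, so $|\sin\pi z|/\operatorname{Re}z$ stays bounded for $|z|\le\tfrac12$, which gives $C_\phi<\infty$.

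For the convergence on $D(A)$ I would exploit that $\frac{\sin\pi z}{\pi}\int_0^\infty \frac{t^{-z}}{1+t}\,\dt=1$ for $0<\operatorname{Re}z<1$ (a Beta-function identity), together with the resolvent identity $(t+A)^{-1}-(1+t)^{-1}\mathrm{Id}=(1+t)^{-1}(t+A)^{-1}(\mathrm{Id}-A)$, whose factors commute and whose right-hand side makes sense applied to $y\in D(A)$. This gives
\[
A^{-z}y-y=\frac{\sin\pi z}{\pi}\int_0^\infty\frac{t^{-z}}{1+t}\,(t+A)^{-1}(\mathrm{Id}-A)y\,\dt .
\]
The integrand is dominated in norm by $M\,\|(\mathrm{Id}-A)y\|_X\,t^{-\operatorname{Re}z}(1+t)^{-2}$, so $\|A^{-z}y-y\|_X\le\frac{|\sin\pi z|}{\pi}\,M\,\|(\mathrm{Id}-A)y\|_X\int_0^\infty\frac{t^{-\operatorname{Re}z}}{(1+t)^2}\,\dt$, whose right-hand side converges to $0$ as $z\to0$ (the integral stays bounded, the prefactor vanishes); hence $A^{-z}y\to y$.

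I expect the uniform boundedness to be the crux. The estimate obtained directly from the Dunford integral degenerates as $\operatorname{Re}z\downarrow0$, because $\lambda^{-z}(\lambda-A)^{-1}$ is then only $O(|\lambda|^{-1})$ at infinity and the integral loses absolute convergence; the remedy is precisely the Balakrishnan representation together with the geometric fact that a sector of half-angle $<\pi/2$ forces $\operatorname{Re}z$ to be comparable to $|z|$, which offsets the $1/\operatorname{Re}z$ singularity against the vanishing factor $\sin\pi z$. This is also where the hypothesis $\phi<\pi/2$ enters in an essential way. The remaining ingredients — density of $D(A)$, the contour deformation, and the interchange of limit and integral — rely only on the sectorial resolvent estimate already in force.
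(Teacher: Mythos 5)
Your proof is correct and complete: the uniform bound on $\overline{\Sigma_\phi}\cap\{0<|z|\le\tfrac12\}$ via the Balakrishnan representation (with $\operatorname{Re}z\ge|z|\cos\phi$ offsetting the $1/\operatorname{Re}z$ singularity against the vanishing factor $\sin\pi z$), the convergence on the dense subspace $D(A)$ via the Beta-function identity and the resolvent identity, and the final $\varepsilon$-argument all check out. The paper does not prove this statement itself --- it quotes it as Theorem 2.21 of Yagi's monograph --- and your argument is essentially the standard proof of that result, so there is nothing to compare beyond noting that it correctly identifies where the restriction $\phi<\pi/2$ (nontangential approach) is genuinely needed, namely only for the uniform boundedness and not for the convergence on $D(A)$.
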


It also holds that $A^{-z}$ satisfies the law of exponent, i.e. \[A^{-z}A^{-z^{\prime}} = A^{-\left(z+z^{\prime}\right)}, \quad \operatorname{Re} z > 0,\, \operatorname{Re} z^{\prime} > 0,\] 
which leads to the following theorem, see \cite[Theorem 2.22]{Ya10}.

\begin{theorem}
The $\mathcal{L}(X)$-valued function $A^{-z}$ is an analytic semigroup defined in the half-plane $\{z \in \mathbb{C} ; \operatorname{Re} z>0\}$.
\end{theorem}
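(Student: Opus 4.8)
The plan is to read this statement as a synthesis of facts already established above, so that its proof amounts to checking, one by one, the short list of axioms defining an analytic semigroup on the open right half-plane. Recall that, in the present setting, a family $\{T(z)\}_{\operatorname{Re} z>0}\subset\mathcal{L}(X)$ (together with $T(0):=\mathrm{Id}$) is an analytic semigroup of angle $\pi/2$ provided: (a) the map $z\mapsto T(z)$ is holomorphic from $\{z\in\mathbb{C}:\operatorname{Re} z>0\}$ into $\mathcal{L}(X)$; (b) $T(z)T(z')=T(z+z')$ for all $\operatorname{Re} z,\operatorname{Re} z'>0$; and, so that the semigroup is strongly continuous, (c) for every $x\in X$ one has $T(z)x\to x$ in $X$ as $z\to 0$ within each proper subsector $\{|\arg z|\le\phi\}$, $\phi<\pi/2$.

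First I would verify (a): this is exactly the assertion, recorded right after the Dunford-integral definition of $A^{-z}$, that $z\mapsto A^{-z}$ is an analytic $\mathcal{L}(X)$-valued function on $\{\operatorname{Re} z>0\}$, which I may take as given (its proof is the standard Cauchy-theorem argument — the integrand $\lambda^{-z}(\lambda-A)^{-1}$ is holomorphic on $\mathbb{C}\setminus(-\infty,0]$ off $\sigma(A)$ and, thanks to $\|(\lambda-A)^{-1}\|_{\mathcal{L}(X)}\le M/(1+|\lambda|)$, decays like $|\lambda|^{-1-\operatorname{Re} z}$, so the contour integral is independent of the admissible $\Gamma$ and holomorphic in $z$). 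Then (b) is precisely the law of exponents $A^{-z}A^{-z'}=A^{-(z+z')}$ stated just above the theorem, upon reading $A^{0}=\mathrm{Id}$, a choice forced by (c) and consistent with $A^{-n}=(A^{n})^{-1}$. Finally, (c) is the conclusion of the convergence theorem proved immediately before, namely that $A^{-z}\to\mathrm{Id}$ strongly as $z\to 0$ with $z$ in a closed subsector of the open right half-plane. Combining (a)--(c) gives that $\{A^{-z}\}_{\operatorname{Re} z>0}$ is an analytic semigroup on the half-plane; in particular its restriction to the positive real axis, $t\mapsto A^{-t}$, is a $C_0$-semigroup (algebraic law from (b), strong continuity on $(0,\infty)$ from (a), strong continuity at $0$ from (c) with $z=t$ real), and the holomorphy on the full half-plane exhibits its analyticity.

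I do not expect a genuine obstacle here: the statement is a bookkeeping of results already in hand. The only point that deserves a remark is that some formulations of ``analytic semigroup'' additionally require local uniform boundedness of $\|T(z)\|_{\mathcal{L}(X)}$ up to the boundary inside each subsector; here this comes for free and need not be re-derived from the contour integral. Indeed, by (c), for each fixed $x$ the set $\{A^{-z}x:z\in\overline{\Sigma_\phi},\,0<|z|\le 1\}$ is bounded — it is bounded on the compact set $\{\delta\le|z|\le 1\}\cap\overline{\Sigma_\phi}$ by the continuity from (a), and bounded near $0$ since $A^{-z}x\to x$ — so the uniform boundedness principle yields $\sup\{\|A^{-z}\|_{\mathcal{L}(X)}:z\in\overline{\Sigma_\phi},\,0<|z|\le 1\}<\infty$ for every $\phi<\pi/2$. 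With this recorded, the verification of the definition is complete.
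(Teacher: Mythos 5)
Your verification is correct: the paper itself gives no proof of this statement (it is imported from Yagi, Theorem 2.22, with the remark that the law of exponents ``leads to'' it), and your argument is exactly the intended synthesis — holomorphy of $z\mapsto A^{-z}$ on $\{\operatorname{Re} z>0\}$, the law of exponents for the semigroup property, and the preceding convergence theorem ($A^{-z}\to\mathrm{Id}$ strongly as $z\to 0$ in closed subsectors) for strong continuity at the origin. Your additional observation that local uniform boundedness of $\|A^{-z}\|_{\mathcal{L}(X)}$ in subsectors follows from the uniform boundedness principle is a correct and worthwhile supplement, since some definitions of analytic semigroup require it explicitly.
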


The fractional power $A^\alpha$, for every real number $-\infty < \alpha < \infty$ is defined, see \cite[Chapter~2.7.2]{Ya10} for details. The following theorem lists some of the properties of the fractional power $A^\alpha$, \textit{cf.} \cite[Theorem 2.23]{Ya10}.

\begin{theorem}
Let $A$ be a sectorial operator on $X$ with angle $\vartheta_A$. Then
\begin{itemize}
    \item[\textnormal{(1)}] for $-\infty<\alpha<0$, $A^{\alpha}$ are bounded operators on $X$. $A^{0} = \mathrm{Id}$ on $X$ and $A^{\alpha}$ are densely defined, closed linear operators of $X$ for $\alpha > 0$.
    
    \item[\textnormal{(2)}] Let $0 \leq \alpha_{1}<\alpha_{2}<\infty$, then $D\left(A^{\alpha_{2}}\right) \subset D\left(A^{\alpha_{1}}\right)$.
    
    \item[\textnormal{(3)}] $A^\alpha$ satisfies the law of exponent, i.e. \[A^{\alpha}A^{\beta} = A^{\beta}A^{\alpha} = A^{\alpha+\beta},\quad  -\infty < \alpha, \beta < \infty.\]
    
    \item[\textnormal{(4)}] For $0 < \alpha < 1$, $A^\alpha$ is a sectorial operator on $X$ with angle $\le \alpha \vartheta_A$.
\end{itemize}
\end{theorem}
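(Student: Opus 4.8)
The plan is to derive (1)--(4) from the Dunford-integral definition of $A^{-z}$ for $\operatorname{Re} z > 0$ recalled above, together with the three properties already established in this appendix: $z \mapsto A^{-z}$ is an analytic $\mathcal{L}(X)$-valued semigroup on $\{\operatorname{Re} z > 0\}$ obeying the law of exponent $A^{-z}A^{-z'} = A^{-(z+z')}$, it coincides with $(A^{n})^{-1}$ when $z = n \in \N$, and $A^{-z} \to \mathrm{Id}$ strongly as $z \to 0$ within any subsector $\overline{\Sigma_{\vartheta}}\setminus\{0\}$. On this basis, (1)--(3) amount to a careful unwinding of the definition $A^{\alpha} := (A^{-\alpha})^{-1}$ for $\alpha>0$, whereas (4) is the analytic heart of the statement; the argument is classical and can be organized as in \cite[Chapter~2.7]{Ya10}.

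For (1): when $\alpha \in (-\infty, 0)$ we set $A^{\alpha} := A^{-(-\alpha)}$, which is bounded by construction, and letting $z = \varepsilon \to 0^{+}$ along $\R_{+}$ in the strong-convergence statement gives $A^{0} = \mathrm{Id}$. For $\alpha > 0$ we set $A^{\alpha} := (A^{-\alpha})^{-1}$; the point is that $A^{-\alpha}$ is injective with dense range, so its inverse is a densely defined, closed operator. Injectivity holds because $0 \in \rho(A)$ makes $A^{-1}$, hence $A^{-n} = (A^{-1})^{n}$, injective, so $A^{-\alpha}x = 0$ forces $A^{-n}x = A^{-(n-\alpha)}A^{-\alpha}x = 0$ for $n > \alpha$, whence $x = 0$; density of $R(A^{-\alpha}) = D(A^{\alpha})$ follows from (2) applied with $\alpha_{1} = \alpha$, $\alpha_{2} = n$ integer, together with the density of $D(A^{n})$ (valid since $A$ is densely defined).

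For (2): if $0 \le \alpha_{1} < \alpha_{2}$ and $x \in D(A^{\alpha_{2}}) = R(A^{-\alpha_{2}})$, write $x = A^{-\alpha_{2}}y$ and use the law of exponent for negative powers to obtain $x = A^{-\alpha_{1}}\big(A^{-(\alpha_{2}-\alpha_{1})}y\big) \in R(A^{-\alpha_{1}}) = D(A^{\alpha_{1}})$; no circularity arises since this uses only the definition of $D(A^{\alpha})$ and the already available identity $A^{-z}A^{-z'} = A^{-(z+z')}$. For (3): the law of exponent is already known for negative exponents, and for the remaining sign combinations one checks, using $A^{\alpha} = (A^{-\alpha})^{-1}$ and the mutual commutation of the bounded operators $A^{-z}$, that the domains on the two sides coincide and the operators agree; commutativity $A^{\alpha}A^{\beta} = A^{\beta}A^{\alpha}$ then follows as well.

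The substantive part is (4). I would first prove the Balakrishnan representation
\[
A^{\alpha}x = \frac{\sin(\pi\alpha)}{\pi}\int_{0}^{\infty}\lambda^{\alpha-1}(\lambda + A)^{-1}A x\,\rd\lambda, \qquad x \in D(A),\ 0 < \alpha < 1,
\]
which is meaningful because $(-\infty,0) \subset \rho(A)$, and then construct the resolvent of $A^{\alpha}$ directly: for $\mu$ with $|\arg\mu| > \alpha\vartheta$, where $\vartheta_{A} < \vartheta < \pi$, one sets
\[
R(\mu) := \frac{1}{2\pi i}\int_{\Gamma}\frac{1}{\mu - \lambda^{\alpha}}(\lambda - A)^{-1}\,\rd\lambda,
\]
with $\Gamma \subset \rho(A)$ surrounding $\sigma(A)$ and chosen so that $\{\lambda^{\alpha} : \lambda \in \Gamma\}$ stays away from $\mu$. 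That $R(\mu)$ is a two-sided inverse of $\mu - A^{\alpha}$ reduces, via the resolvent identity for $A$ and a residue computation for $\lambda \mapsto (\mu - \lambda^{\alpha})^{-1}$, to bookkeeping, and the spectral mapping $\sigma(A^{\alpha}) \subset \{\lambda^{\alpha} : \lambda \in \sigma(A)\} \subset \overline{\Sigma_{\alpha\vartheta_{A}}}$ drops out of the same construction. The bound $\|R(\mu)\|_{\mathcal{L}(X)} \le M^{\prime}/(|\mu|+1)$ is then obtained by splitting $\Gamma$ into the small arc near the origin and the two outgoing rays, rescaling $\lambda = |\mu|^{1/\alpha}\zeta$, and combining $\|(\lambda - A)^{-1}\|_{\mathcal{L}(X)} \le M/|\lambda|$ with a lower bound $|\mu - \lambda^{\alpha}| \ge c\,(|\mu| + |\lambda^{\alpha}|)$ valid on $\Gamma$. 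I expect this last uniform resolvent estimate to be the main obstacle: the contour and its parametrization must be arranged so that the opening angle $\alpha\vartheta_{A}$ is recovered exactly and the $O(|\mu|^{-1})$ decay of $R(\mu)$ survives integration over $\Gamma$ for all $\mu$ outside $\overline{\Sigma_{\alpha\vartheta_{A}}}$; by contrast, (1)--(3) are routine once the properties already recorded in this appendix are in place.
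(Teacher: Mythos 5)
The paper does not actually prove this statement: it is recalled verbatim from \cite[Theorem 2.23]{Ya10} as background material for Appendix~\ref{sec:frac_pow_dom}, so there is no in-paper argument to compare yours against. What you propose is essentially the standard proof from that reference and its classical sources: define $A^{-z}$ by the Dunford integral, get (1)--(3) by unwinding $A^{\alpha}:=(A^{-\alpha})^{-1}$ together with the semigroup law for $\{A^{-z}\}_{\operatorname{Re}z>0}$, and prove (4) by building the resolvent of $A^{\alpha}$ as a contour integral of $(\mu-\lambda^{\alpha})^{-1}(\lambda-A)^{-1}$ and extracting the sector of angle $\alpha\vartheta_{A}$ from the angular separation between $\arg\mu$ and $\alpha\arg\lambda$ on $\Gamma$. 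Your outline is sound, and you correctly identify the uniform resolvent bound in (4) as the only genuinely technical step. Two small caveats. First, in (1) the density of $D(A^{n})$ does not follow formally from ``$A$ is densely defined''; one needs the resolvent approximation $\bigl(k(k+A)^{-1}\bigr)^{n}x\to x$, which is available from \ref{ass:AT2}, or an equivalent mollification --- a one-line remark would close this. Second, item (3) as literally stated (and as quoted by the paper from Yagi) cannot hold with equality of domains for every sign combination: for $\alpha>0>\beta$ one has $A^{\alpha}A^{\beta}=A^{\alpha+\beta}$ but only $A^{\beta}A^{\alpha}\subset A^{\alpha+\beta}$ in general (e.g.\ $A^{-1}A=\mathrm{Id}|_{D(A)}$ whereas $A^{0}=\mathrm{Id}$ on all of $X$), so the commutativity claim needs a domain qualification. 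That is a defect of the quoted statement rather than of your argument, but your phrase ``the domains on the two sides coincide'' should be weakened accordingly.
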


Let $0 < \alpha < 1$ and $A^{\alpha}$ be the fractional powers of $A$. Let $M_\pi$ be the constant appearing in the assumption \ref{ass:AT2} with angle $\vartheta = \pi$. Let us introduce the spaces
$$
D_{\alpha}(A)=\left\{\mathrm{v} \in X \colon \sup _{0<\rho<\infty} \rho^{\alpha}\|A(\rho+A)^{-1} \mathrm{v}\|_{X} < \infty\right\}, \quad 0 \leq \alpha \leq 1.
$$
$D_{\alpha}(A)$ are normed spaces, equipped with the norms
$$
\|\mathrm{v}\|_{D_{\alpha}(A)}=\sup _{0<\rho<\infty} \rho^{\alpha}\|A(\rho+A)^{-1} \mathrm{v}\|_{X}.
$$

In the following we compare the domain of fractional powers of the operator $A$ to that of $D_\alpha(A)$ \cite[Theorem 2.24]{Ya10}.

\begin{theorem}
For any $0<\alpha<1, D\left(A^{\alpha}\right) \subset D_{\alpha}(A),$ and the estimate
$$
\|\mathrm{v}\|_{D_{\alpha}(A)} \leq C\left(1 + M_{\pi}\right)^{2}\|A^{\alpha} \mathrm{v}\|_{X}, \quad \mathrm{v} \in D\left(A^{\alpha}\right)
$$
holds true. Conversely, for any $0<\alpha<\alpha^{\prime}<1, D_{\alpha^{\prime}}(A) \subset D\left(A^{\alpha}\right),$ and the estimate
$$
\|A^{\alpha} \mathrm{v}\|_{X} \leq C_{\alpha, \alpha^{\prime}}\left[\|\mathrm{v}\|_{D_{\alpha^{\prime}}(A)}+\left( 1 + M_{\pi}\right)\|\mathrm{v}\|_{X}\right], \quad \mathrm{v} \in D_{\alpha^{\prime}}(A),
$$
holds true.
\end{theorem}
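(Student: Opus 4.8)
The plan is to treat the two inclusions separately, using in both the resolvent $A(\rho+A)^{-1}$ as the bridge between the fractional power $A^{\alpha}$ and the quantity defining $D_{\alpha}(A)$. Throughout I use the two elementary resolvent bounds that follow from \ref{ass:AT2} specialized to $\vartheta=\pi$ (recall $M_{\pi}$ is introduced just above), namely $\|(\rho+A)^{-1}\|_{\mathcal{L}(X)}\le M_{\pi}/\rho$ and $\|A(\rho+A)^{-1}\|_{\mathcal{L}(X)}=\|\mathrm{Id}-\rho(\rho+A)^{-1}\|_{\mathcal{L}(X)}\le 1+M_{\pi}$, valid for every $\rho>0$, together with the fact that $A$ is densely defined.

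For the first inclusion $D(A^{\alpha})\subset D_{\alpha}(A)$, I would fix $v\in D(A^{\alpha})$ and set $w:=A^{\alpha}v$. Since $A^{-\alpha}$ commutes with the resolvent, $A(\rho+A)^{-1}v=A^{1-\alpha}(\rho+A)^{-1}w$, so the task reduces to bounding $\|A^{1-\alpha}(\rho+A)^{-1}w\|_{X}$ by $C\rho^{-\alpha}\|w\|_{X}$. This I would obtain from the moment (interpolation) inequality for fractional powers, $\|A^{1-\alpha}y\|_{X}\le C(1+M_{\pi})\|Ay\|_{X}^{1-\alpha}\|y\|_{X}^{\alpha}$ (see \cite[Chapter~2.7]{Ya10}), applied to $y=(\rho+A)^{-1}w$, and then inserting the two resolvent bounds above. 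This yields $\rho^{\alpha}\|A(\rho+A)^{-1}v\|_{X}\le C(1+M_{\pi})^{2}\|A^{\alpha}v\|_{X}$, and taking the supremum over $\rho\in(0,\infty)$ shows simultaneously that $v\in D_{\alpha}(A)$ and that the asserted norm estimate holds.

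The converse inclusion is where the real work lies. The starting point is the Balakrishnan representation $A^{\alpha}y=\frac{\sin\pi\alpha}{\pi}\int_{0}^{\infty}\rho^{\alpha-1}A(\rho+A)^{-1}y\,\dr$, valid for $y\in D(A)$. I would first show that for $v\in D_{\alpha'}(A)$ the same integral converges absolutely: splitting $\int_{0}^{\infty}=\int_{0}^{1}+\int_{1}^{\infty}$, on $[0,1]$ the integrand is bounded by $\rho^{\alpha-1}(1+M_{\pi})\|v\|_{X}$, which is integrable because $\alpha>0$; on $[1,\infty)$ the defining bound $\|A(\rho+A)^{-1}v\|_{X}\le\|v\|_{D_{\alpha'}(A)}\,\rho^{-\alpha'}$ controls the integrand by $\|v\|_{D_{\alpha'}(A)}\,\rho^{\alpha-1-\alpha'}$, which is integrable precisely because $\alpha<\alpha'$. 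This is the crucial point where the hypothesis $\alpha<\alpha'$ enters, and it produces the estimate $\|w\|_{X}\le C_{\alpha,\alpha'}\big[\|v\|_{D_{\alpha'}(A)}+(1+M_{\pi})\|v\|_{X}\big]$ for $w:=\frac{\sin\pi\alpha}{\pi}\int_{0}^{\infty}\rho^{\alpha-1}A(\rho+A)^{-1}v\,\dr$, with $C_{\alpha,\alpha'}$ absorbing the factors $1/\alpha$, $1/(\alpha'-\alpha)$ and $\sin\pi\alpha/\pi$.

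It remains to identify $w$ with $A^{\alpha}v$, which I would do by approximation and closedness. Setting $v_{n}:=n(n+A)^{-1}v\in D(A)\subset D(A^{\alpha})$, one has $v_{n}\to v$ in $X$ since $A$ is densely defined and $\sup_{n}\|n(n+A)^{-1}\|_{\mathcal{L}(X)}\le M_{\pi}$. As $n(n+A)^{-1}$ commutes with $A(\rho+A)^{-1}$, Balakrishnan's formula gives $A^{\alpha}v_{n}=\frac{\sin\pi\alpha}{\pi}\int_{0}^{\infty}\rho^{\alpha-1}\,n(n+A)^{-1}A(\rho+A)^{-1}v\,\dr$, whose integrand is dominated uniformly in $n$ by $M_{\pi}\rho^{\alpha-1}\|A(\rho+A)^{-1}v\|_{X}$, integrable by the previous step. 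Since $n(n+A)^{-1}\to\mathrm{Id}$ strongly, dominated convergence yields $A^{\alpha}v_{n}\to w$, and closedness of $A^{\alpha}$ then forces $v\in D(A^{\alpha})$ with $A^{\alpha}v=w$, completing the inclusion $D_{\alpha'}(A)\subset D(A^{\alpha})$ and the accompanying estimate. The main obstacle, and the step requiring the most care, is this converse direction: arranging the split-integral convergence to hinge correctly on $\alpha<\alpha'$ and then rigorously passing the Yosida approximation through the Bochner integral in order to invoke closedness of $A^{\alpha}$.
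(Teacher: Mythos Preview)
The paper does not supply its own proof of this theorem: in Appendix~\ref{sec:frac_pow_dom} the statement is quoted verbatim from \cite[Theorem~2.24]{Ya10} and left without argument. Your proposal is correct and is in fact the standard proof one finds in Yagi's book: the moment inequality for fractional powers yields the first inclusion, while the Balakrishnan integral representation, split at $\rho=1$ and combined with a Yosida approximation plus closedness of $A^{\alpha}$, gives the converse. There is nothing to compare against in the paper itself, and nothing missing in your sketch.
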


Next, we compare domains of fractional powers of two sectorial operators $A$ and $B$ of $X$ for which $D(A) \subset D(B)$ continuously, i.e. there exists a constant $C > 0$ such that
$$
\|B \mathrm{v}\|_{X} \leq C\|A \mathrm{v}\|_{X}, \quad \mathrm{v} \in D(A).
$$

\begin{theorem}\cite[Theorem 2.25]{Ya10}
Let $A$ and $B$ be two sectorial operators of $X$ satisfying the above relationship between their domains, as well as assumptions \ref{ass:AT1} and \ref{ass:AT2}. Then, for any $0<\alpha<\alpha^{\prime}<1$, $D(A^{\alpha^{\prime}}) \subset D(B^{\alpha})$
and the estimate
$$
\|B^{\alpha} \mathrm{v}\|_{X} \leq C_{\alpha^{\prime}, \alpha}\|A^{\alpha^{\prime}} \mathrm{v}\|_{X}, \quad \mathrm{v} \in D(A^{\alpha^{\prime}})
$$
holds true, where $C_{\alpha, \alpha^{\prime}}>0$ is determined by $\alpha, \alpha^{\prime}, M_{\pi}$ and $C$.
\end{theorem}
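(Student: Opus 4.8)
The plan is to interpolate through the intermediate spaces $D_{\gamma}(\cdot)$ introduced above, exploiting that the comparison between $D(A^{\gamma})$ and $D_{\gamma}(A)$ from the preceding \cite[Theorem~2.24]{Ya10} is available for \emph{both} operators, since $A$ and $B$ each satisfy \ref{ass:AT1} and \ref{ass:AT2}. Concretely, I would build the chain
\[
D(A^{\alpha'}) \subset D_{\alpha'}(A) \subset D_{\alpha'}(B) \subset D(B^{\alpha})
\]
and track the norm estimate across each inclusion. The first inclusion is \cite[Theorem~2.24]{Ya10} applied to $A$ with index $\alpha'$ (legitimate because $0<\alpha'<1$), giving $\|\mathrm{v}\|_{D_{\alpha'}(A)} \le C(1+M_{\pi})^{2}\|A^{\alpha'}\mathrm{v}\|_{X}$; the third is the converse inclusion of the same theorem applied to $B$ with the pair $\alpha<\alpha'$, giving $\|B^{\alpha}\mathrm{v}\|_{X} \le C_{\alpha,\alpha'}\bigl[\|\mathrm{v}\|_{D_{\alpha'}(B)} + (1+M_{\pi})\|\mathrm{v}\|_{X}\bigr]$. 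The leftover zero-order term is harmless: since $\alpha'>0$, part~(1) of \cite[Theorem~2.23]{Ya10} ensures $A^{-\alpha'}$ is bounded, so $\|\mathrm{v}\|_{X} = \|A^{-\alpha'}A^{\alpha'}\mathrm{v}\|_{X} \le \|A^{-\alpha'}\|_{\mathcal{L}(X)}\|A^{\alpha'}\mathrm{v}\|_{X}$.

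The heart of the argument, and the step I expect to be the main obstacle, is the middle inclusion $D_{\alpha'}(A) \subset D_{\alpha'}(B)$ with the estimate $\|\mathrm{v}\|_{D_{\alpha'}(B)} \le C'\|\mathrm{v}\|_{D_{\alpha'}(A)}$. Writing $\beta:=\alpha'$, this reduces to controlling $\rho^{\beta}\|B(\rho+B)^{-1}\mathrm{v}\|_{X}$ by $\|\mathrm{v}\|_{D_{\beta}(A)}$ uniformly in $\rho\in(0,\infty)$. The genuine difficulty is that the hypothesis $\|B\mathrm{w}\|_{X} \le C\|A\mathrm{w}\|_{X}$ is only available on $D(A)$, whereas $(\rho+B)^{-1}\mathrm{v}$ need not lie in $D(A)$. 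I would circumvent this with the resolvent identity
\[
(\rho+B)^{-1} = (\rho+A)^{-1} + (\rho+B)^{-1}(A-B)(\rho+A)^{-1},
\]
which yields $B(\rho+B)^{-1} = B(\rho+A)^{-1} + B(\rho+B)^{-1}(A-B)(\rho+A)^{-1}$. In both resulting terms the operator $B$ is now applied either to $(\rho+A)^{-1}\mathrm{v}\in D(A)$, where the hypothesis applies, or as the uniformly bounded operator $B(\rho+B)^{-1}$.

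To close the estimate I would assemble three bounds. First, since $(\rho+A)^{-1}\mathrm{v}\in D(A)\subset D(B)$, the hypothesis gives $\|B(\rho+A)^{-1}\mathrm{v}\|_{X} \le C\|A(\rho+A)^{-1}\mathrm{v}\|_{X} \le C\rho^{-\beta}\|\mathrm{v}\|_{D_{\beta}(A)}$, the last step being the definition of $\|\cdot\|_{D_{\beta}(A)}$. Second, \ref{ass:AT2} applied to $B$ (with $\vartheta=\pi$) yields $\|(\rho+B)^{-1}\|_{\mathcal{L}(X)} \le M_{\pi}/(\rho+1)$, whence $\|B(\rho+B)^{-1}\|_{\mathcal{L}(X)} = \|\mathrm{Id}-\rho(\rho+B)^{-1}\|_{\mathcal{L}(X)} \le 1+M_{\pi}$ uniformly in $\rho$. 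Third, $\|(A-B)(\rho+A)^{-1}\mathrm{v}\|_{X} \le (1+C)\rho^{-\beta}\|\mathrm{v}\|_{D_{\beta}(A)}$, combining the two preceding bounds. Multiplying out the decomposition and using these estimates gives $\rho^{\beta}\|B(\rho+B)^{-1}\mathrm{v}\|_{X} \le \bigl[C + (1+M_{\pi})(1+C)\bigr]\|\mathrm{v}\|_{D_{\beta}(A)}$, and taking the supremum over $\rho$ produces the comparison of the intermediate norms. Chaining the three inclusion estimates together with the zero-order bound then delivers $\|B^{\alpha}\mathrm{v}\|_{X} \le C_{\alpha,\alpha'}\|A^{\alpha'}\mathrm{v}\|_{X}$, with a constant depending only on $\alpha,\alpha',M_{\pi}$ and $C$, as asserted.
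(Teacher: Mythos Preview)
The paper does not supply its own proof of this statement; it is quoted verbatim as \cite[Theorem 2.25]{Ya10} in the appendix, with no argument given. So there is nothing in the paper to compare your proposal against.

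That said, your argument is correct and is in fact the standard route (and essentially the one in Yagi's book): factor through the intermediate spaces via the chain $D(A^{\alpha'}) \subset D_{\alpha'}(A) \subset D_{\alpha'}(B) \subset D(B^{\alpha})$, invoking \cite[Theorem 2.24]{Ya10} for the outer two inclusions and proving the middle one directly. Your handling of the middle inclusion via the resolvent identity $(\rho+B)^{-1} = (\rho+A)^{-1} + (\rho+B)^{-1}(A-B)(\rho+A)^{-1}$ is the right device to exploit the hypothesis $\|B\mathrm{w}\|_X \le C\|A\mathrm{w}\|_X$ on $D(A)$ only, and your three auxiliary bounds (the domain hypothesis applied to $(\rho+A)^{-1}\mathrm{v}$, the uniform bound $\|B(\rho+B)^{-1}\|_{\mathcal{L}(X)}\le 1+M_\pi$ from \ref{ass:AT2}, and the combined estimate for $(A-B)(\rho+A)^{-1}$) are all sound. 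The absorption of the residual $\|\mathrm{v}\|_X$ term via $\|A^{-\alpha'}\|_{\mathcal{L}(X)}$ is also correct. Nothing is missing.
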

 

\end{document}